\renewcommand{\injlim}{\varinjlim}
\renewcommand{\projlim}{\varprojlim}
\crefname{equation}{}{}
\crefname{enumi}{}{}
\newlist{conenum}{enumerate}{1}
\setlist[conenum,1]{label=(\roman*),ref=\roman*}
\crefname{conenumi}{}{}
\numberwithin{equation}{section}
\theoremstyle{plain}
\newtheorem{Theorem}{Theorem}
\crefname{Theorem}{Theorem}{Theorems}
\newtheorem{conjecture}[equation]{Conjecture}
\newtheorem{corollary}[equation]{Corollary}
\newtheorem{lemma}[equation]{Lemma}
\newtheorem{proposition}[equation]{Proposition}
\newtheorem{theorem}[equation]{Theorem}
\crefname{theorem}{Theorem}{Theorems}
\theoremstyle{definition}
\newtheorem{definition}[equation]{Definition}
\newtheorem{example}[equation]{Example}
\theoremstyle{remark}
\newtheorem{remark}[equation]{Remark}
\let\oldAA\AA\let\AA\relax
\let\oldL\L\let\L\relax
\let\oldSS\SS\let\SS\relax
\let\oldtop\top\let\top\relax
\newcommand{\NN}{\mathbf{N}}
\newcommand{\ZZ}{\mathbf{Z}}
\newcommand{\QQ}{\mathbf{Q}}
\newcommand{\RR}{\mathbf{R}}
\newcommand{\CC}{\mathbf{C}}
\newcommand{\HH}{\mathbf{H}}
\newcommand{\SS}{\mathbf{S}}
\newcommand{\AA}{\mathbb{A}}
\newcommand{\E}{\mathrm{E}}
\newcommand{\cn}{\textnormal{cn}}
\newcommand{\fin}{\textnormal{fin}}
\newcommand{\top}{\textnormal{top}}
\newcommand{\st}{\textnormal{st}}
\newcommand{\D}{\operatorname{D}}
\newcommand{\BD}{\operatorname{BD}}
\newcommand{\CMon}{\operatorname{CMon}}
\newcommand{\Fun}{\operatorname{Fun}}
\newcommand{\GL}{\operatorname{GL}}
\newcommand{\HC}{\operatorname{HC}}
\newcommand{\KU}{\operatorname{KU}}
\newcommand{\Map}{\operatorname{Map}}
\newcommand{\Mat}{\operatorname{Mat}}
\newcommand{\NK}{\operatorname{NK}}
\newcommand{\Perf}{\operatorname{Perf}}
\newcommand{\Post}{\operatorname{Post}}
\newcommand{\Pro}{\operatorname{Pro}}
\newcommand{\Pt}{\operatorname{Pt}}
\newcommand{\Shv}{\operatorname{Shv}}
\newcommand{\Sh}{\operatorname{Sh}}
\newcommand{\Sp}{\operatorname{Sp}}
\newcommand{\Spec}{\operatorname{Spec}}
\newcommand{\Vect}{\operatorname{Vect}}
\newcommand{\cofib}{\operatorname{cofib}}
\newcommand{\coker}{\operatorname{coker}}
\newcommand{\fil}{\operatorname{fil}}
\newcommand{\gp}{\operatorname{gp}}
\newcommand{\hyp}{\operatorname{hyp}}
\newcommand{\id}{\operatorname{id}}
\newcommand{\ko}{\operatorname{ko}}
\newcommand{\ksp}{\operatorname{ksp}}
\newcommand{\ku}{\operatorname{ku}}
\newcommand{\op}{\operatorname{op}}
\newcommand{\cf}{\operatorname{cf}}
\newcommand{\X}{\mathord{-}}
\newcommand{\sol}{\operatorname{\vcenter{\hbox{\scalebox{0.5}{\(\blacksquare\)}}}}}
\newcommand{\con}{\operatorname{con}}
\newcommand{\dis}{\operatorname{dis}}
\newcommand{\Prodis}{\operatorname{Pro-dis}}
\newcommand{\cat}[1]{\mathcal{#1}}
\newcommand{\cst}[1]{\underline{#1}}
\newcommand{\Cls}[1]{\mathscr{#1}}
\newcommand{\Cat}[1]{\mathsf{#1}}
\title{(Semi)topological \texorpdfstring{\(K\)}{K}-theory via solidification}
\author{Ko Aoki}
\address{Max Planck Institute for Mathematics,
  Vivatsgasse 7, 53111 Bonn, Germany
}
\email{aoki@mpim-bonn.mpg.de}
\date{\today}
\begin{document}

\begin{abstract}
  Clausen--Scholze introduced
  the notion of solid spectrum
  in their condensed mathematics program.
  We demonstrate that
  the solidification of algebraic \(K\)-theory
  recovers two known constructions:
  the semitopological \(K\)-theory
  of a real (associative) algebra
  and 
  the topological (aka operator) \(K\)-theory
  of a real Banach algebra.
\end{abstract}

\maketitle
\setcounter{tocdepth}{1}
\tableofcontents

\section{Introduction}\label{s:intro}

Consider a real Banach algebra~\(A\).
Its \(K\)-theory
typically refers to
its topological (aka operator) \(K\)-theory~\(K^{\top}(A)\).
Nevertheless, we can still take
its (nonconnective) algebraic \(K\)-theory~\(K(A)\).
There is a comparison map
\(K(A)\to K^{\top}(A)\),
which induces an equivalence on \(\pi_0\).
These two are in general very different:
When \(A=\CC\),
the homotopy group of \(K^{\top}(\CC)=\KU\)
is just~\(\ZZ\) or~\(0\)
depending on the parity of the degree,
whereas its higher (algebraic) \(K\)-groups are enormous
and lower ones are zero.

In this paper,
we explain how to get~\(K^{\top}\) from~\(K\)
using the language of condensed mathematics.
A condensed spectrum is
a functor
from extremally disconnected sets
(i.e., a retract of \(\beta I\) for some set~\(I\))
to spectra
satisfying certain conditions.
For any reasonable topological ring~\(A\),
we get the corresponding condensed ring,
which we still write as~\(A\),
by considering \(S\mapsto\Cls{C}(S;A)\).
With this,
we can consider \(K(A)\) as a condensed spectrum
by considering \(S\mapsto K(\Cls{C}(S;A))\).
One compelling tool in condensed mathematics
is the operation of solidification,
a certain completion process
applied to condensed spectra.
We prove the following:

\begin{Theorem}\label{com_ban}
  Let \(A\) be a complex Banach algebra.
  The solidification of
  its connective \(K\)-theory
  \(K^{\cn}(A)^{\sol}\)
  is discrete
  (i.e., its condensed structure is trivial)
  and
  canonically equivalent to
  the connective part
  of \(K^{\top}(A)\).
  The condensed spectrum
  \(K(A)^{\sol}[\beta^{-1}]\) is discrete
  and
  canonically equivalent to~\(K^{\top}(A)\),
  where \(\beta\in K_{2}^{\top}(\CC)\simeq\ZZ\)
  is a generator.
\end{Theorem}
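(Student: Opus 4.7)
The plan is to prove the first statement as the core result and then derive the second statement by inverting the Bott element \(\beta\). My strategy for the first part goes through the intermediate notion of semitopological \(K\)-theory, which (as the paper's title suggests) is what solidification of algebraic \(K\)-theory yields in general; for Banach algebras, this agrees with topological \(K\)-theory.

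Granting the first statement, the second follows from
\[K^{\cn}(A)^{\sol}[\beta^{-1}] \simeq (\tau_{\geq 0} K^{\top}(A))[\beta^{-1}] \simeq K^{\top}(A),\]
the last equivalence being classical Bott periodicity for complex Banach algebras, together with the identification \(K(A)^{\sol}[\beta^{-1}] \simeq K^{\cn}(A)^{\sol}[\beta^{-1}]\). The latter reduces to showing \(\fib(K^{\cn}(A) \to K(A))^{\sol}[\beta^{-1}] \simeq 0\); this fiber lives in strictly negative degrees, so inverting a degree-\(2\) class kills it provided its solidification is bounded above, which is expected for Banach \(A\) where negative algebraic \(K\)-theory is controlled by topological phenomena.

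For the first statement, any discrete condensed spectrum is automatically solid, so it suffices to construct a natural comparison \(K^{\cn}(A) \to \underline{\tau_{\geq 0} K^{\top}(A)}\) exhibiting the target as the solidification of the source. Such a map is assembled sectionwise from the algebraic-to-topological comparison \(K^{\cn}(\Cls{C}(S;A)) \to \tau_{\geq 0} K^{\top}(\Cls{C}(S;A))\), using the identification \(\tau_{\geq 0} K^{\top}(\Cls{C}(S;A)) \simeq \Cls{C}(S;\tau_{\geq 0} K^{\top}(A))\) for extremally disconnected~\(S\) (where vector bundles on totally disconnected compacta are locally constant). To verify the universal property of solidification, I would model \(K^{\cn}(A)\) as the group completion of the condensed \(\E_\infty\)-space \(\bigsqcup_n B\GL_n(A)\), with \(\GL_n(A)\colon S \mapsto \GL_n(\Cls{C}(S;A))\), and show that solidification sends \(B\GL_n(A)\) to \(\underline{B\GL_n^{\top}(A)}\). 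Since group completion commutes with the reflective localization of solidification, this implies the claim.

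The main obstacle is the \(B\GL_n\)-level identification: for extremally disconnected~\(S\), one must show that algebraic \(\GL_n\)-torsors over \(\Cls{C}(S;A)\), modulo the pro-system relations introduced by solidification, coincide with continuous principal \(\GL_n^{\top}(A)\)-bundles on~\(S\). The Banach hypothesis enters through holomorphic functional calculus, which realizes homotopies between sufficiently close idempotents inside \(\Mat_n(\Cls{C}(S;A))\). For \(S = \varprojlim_i S_i\) profinite, the pro-completion implicit in solidification should match the passage to the limit along the finite quotients, reducing the desired comparison to the fact that the topological \(K\)-theory of \(\Cls{C}(S;A)\) is the limit of the topological \(K\)-theories of the \(\Cls{C}(S_i;A)\).
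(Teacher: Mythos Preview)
Your outline has the right overall shape but contains two substantive gaps, and it diverges from the paper's actual route.

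First, the assertion that ``group completion commutes with the reflective localization of solidification'' is the crux, and it is not automatic. Solidification is defined on condensed \emph{spectra}, not on condensed anima or \(\E_{\infty}\)-monoids, so it is not clear what ``solidification sends \(B\GL_n(A)\) to \(\underline{B\GL_n^{\top}(A)}\)'' should mean; and even granting an identification \(\SS[B\GL_n(A)]^{\sol}\simeq\SS[B\GL_n^{\top}(A)]\), passing through group completion is precisely the hard step. The paper resolves this by constructing a Breen--Deligne resolution (\cref{bd}): a functorial simplicial resolution of \(M^{\gp}\) by finite sums of functors \(\SS[M^n]\), reducing the problem to solidifying \(\SS[\GL_{r_1}(A)\times\dotsb\times\GL_{r_n}(A)]\). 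These are then handled not via functional calculus or pro-limits along finite quotients but by observing that \(\GL_r(A)\), as an open subset of a Banach space, is locally contractible in the sense of \cref{lslc}, so that \(\SS[\GL_r(A)]^{\sol}\) is discrete and computed by the underlying homotopy type. Your suggestion that ``the pro-completion implicit in solidification should match the passage to the limit along the finite quotients'' is not how solidification of \(\SS[X]\) behaves; see \cref{ss:core}.

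Second, for the nonconnective claim you need \(\cofib(K^{\cn}(A)\to K(A))^{\sol}[\beta^{-1}]=0\), and ``its solidification is bounded above'' is neither justified nor what the paper uses. The argument in \cref{x59vdq} is that \(K(\RR)\) is connective \emph{as a condensed spectrum} (a nontrivial input from~\cite{k-ros-1}), so that each \(\pi_n K(A)\) is a static \(K(\RR)\)-module; on any such module the \(\ko\)-action on its solidification factors through \(\pi_0 K(\RR)^{\sol}\simeq\ZZ\), where \(\beta\) vanishes. Finally, the paper does not verify the universal property of solidification against a hand-built comparison map as you propose: it first establishes discreteness of \(K^{\cn}(A)^{\sol}\) (\cref{dis_ban}), then characterizes \(\tau_{\geq0}K^{\top}\) uniquely by three axioms---agreement on \(\pi_0\), \([0,1]\)-homotopy invariance, and excision for Milnor squares---and checks these for \(K^{\cn}(\X)^{\sol}\) using \cref{kap_dis}, \cref{hi_ban}, and \cref{exc} respectively (\cref{x9ac0m}).
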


Note that
we can also regard a complex algebra~\(A\) (without topology)
as a condensed \(\CC\)-algebra
by considering
\(S\mapsto A\otimes_{\CC}\Cls{C}(S;\CC)\).
In that case,
we recover the semitopological \(K\)-theory
of Friedlander--Walker~\cite{FriedlanderWalker01C}\footnote{We here refer to what they introduced in~\cite{FriedlanderWalker01C},
  not to what they called ``semitopological \(K\)-theory'' in that paper;
  see~\cite{FriedlanderWalker05} for the history.
}
and Blanc~\cite{Blanc16}:

\begin{Theorem}\label{com_cat}
  Let \(A\) be a complex associative algebra.
  We equip \(A\) with the condensed structure
  coming from the usual topology on~\(\CC\).
  Then \(K(A)^{\sol}\) is discrete
  and canonically coincides with
  the semitopological \(K\)-theory~\(K^{\st}(A)\).
\end{Theorem}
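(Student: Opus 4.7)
The plan is to match the solidification procedure against the explicit simplicial definition of semitopological \(K\)-theory. Recall that for a complex associative algebra~\(A\), the Friedlander--Walker--Blanc construction of \(K^{\st}(A)\) is the geometric realization
\[
  K^{\st}(A)\simeq\bigl\lvert\,[n]\mapsto K(A\otimes_{\CC}\Cls{C}(\Delta^{n}_{\top};\CC))\,\bigr\rvert
\]
of the simplicial spectrum obtained by tensoring~\(A\) with the cosimplicial \(\CC\)-algebra \([n]\mapsto\Cls{C}(\Delta^{n}_{\top};\CC)\) of continuous \(\CC\)-valued functions on the topological simplices. My task is to identify \(K(A)^{\sol}\) with this realization and to verify discreteness.

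First, I would unpack the condensed structure on~\(K(A)\): the condensed \(\CC\)-algebra associated to~\(A\) sends an extremally disconnected set~\(S\) to \(A\otimes_{\CC}\Cls{C}(S;\CC)\), and hence \(K(A)\) is the condensed spectrum \(S\mapsto K(A\otimes_{\CC}\Cls{C}(S;\CC))\). The strategy is then to invoke a general identification --- developed in an earlier section of the paper --- expressing the solidification of a condensed spectrum of this topological-tensor form as the geometric realization of its values on the cosimplicial diagram of topological simplices \([n]\mapsto\Delta^{n}_{\top}\), with the output landing automatically in the discrete subcategory. Applied to \(K(A)\), this directly produces the geometric realization above and so identifies \(K(A)^{\sol}\) with \(K^{\st}(A)\) canonically.

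The main obstacle is establishing this key identification of solidification with the topological-simplicial geometric realization. Concretely, one needs to (i)~make sense of evaluating the condensed spectrum on the non-profinite space~\(\Delta^{n}_{\top}\), which proceeds via a hypercover by extremally disconnected sets; (ii)~verify that the resulting cosimplicial spectrum computes the solidification rather than some weaker completion; and (iii)~check the discreteness of the output, which should reduce to a contractibility property of \(\Cls{C}(\Delta^{n}_{\top};\CC)\) in the solid category. Once these pieces are in hand, \cref{com_cat} follows from the definitional match between the two geometric realizations, and canonicity of the equivalence is automatic from naturality in the algebra~\(A\). Unlike the companion statement \cref{com_ban}, no Bott inversion is required: the semitopological side is built directly from the simplicial realization and needs no Banach completion to mediate, so the cleaner equivalence \(K(A)^{\sol}\simeq K^{\st}(A)\) suffices.
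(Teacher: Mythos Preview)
Your overall direction matches the paper's: one does identify $K(A)^{\sol}$ with the geometric realization of $K(A\otimes_{\CC}\Cls{C}(\Delta^{\bullet};\CC))$ (this is the content of \cref{hi_alg}, the real version of \cref{com_cat}). However, your sketch misreads what the comparison actually involves and omits the substantive ingredients.

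Your item~(i) contains a genuine misunderstanding. The value $K(A\otimes_{\CC}\Cls{C}(\Delta^n;\CC))$ is \emph{not} the evaluation of the condensed spectrum $K(A)$ at $\Delta^n$ via a hypercover; the paper explicitly warns at the start of \cref{ss:hi} that $K(\Cls{C}([0,1];\CC))$ is not the value of the condensed spectrum $K(\CC)$ at $[0,1]$. One must instead work with the \emph{presheaf} $T\mapsto K(A\otimes_{\CC}\Cls{C}(T;\CC))$ on $\Cat{Cpt}$ and compare its $[0,1]$-homotopification $L_{[0,1]}$ (which gives the simplicial realization by \cref{xwyhup}) with the solidification of its condensation. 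The identification you invoke is not a general fact about condensed spectra: it is established only for presheaves arising from spaces satisfying a local contractibility condition (\cref{lslc} and \cref{x286fd}). The paper's route is to use a Breen--Deligne resolution of group completion (\cref{bd}) to reduce $\tau_{\geq1}K(A)$ to the functors $\SS[\GL_{r_1}(A)\times\dotsb\times\GL_{r_n}(A)]$, and then verify that these products of $\GL_r(A)$ meet the local contractibility hypothesis by filtering them into real algebraic varieties (proof of \cref{dis_alg}). Your item~(iii) about contractibility of $\Cls{C}(\Delta^n;\CC)$ does not approximate this mechanism. Finally, extending from $\tau_{\geq1}K$ to nonconnective $K$ uses a separate descending induction via the fundamental theorem of $K$-theory.
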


\begin{remark}\label{1c99e9dff2}
  In \cref{com_ban,com_cat},
  the use of solidification is
  nonessential;
  as it is discrete,
  we can just say that it is the discretization,
  i.e., the universal discrete object receiving a map from it.
  However, unlike solidification,
  discretization a~priori does not exist;
  see \cref{xh0gyx}.
\end{remark}

The usual constructions of these invariants
use certain information about
\(K(\Cls{C}([0,1];A))\) for a complex Banach algebra~\(A\)
or \(K(A\otimes_{\CC}\Cls{C}([0,1];\CC))\) for a complex algebra~\(A\).
What is notable about these results is that
they show we only need information
about \(K(\Cls{C}(S;A))\) or \(K(A\otimes_{\CC}\Cls{C}(S;\CC))\)
for extremally disconnected sets~\(S\).
While \([0,1]\) is geometrically nicer than extremally disconnected sets,
\(\Cls{C}(S;A)\), where \(S\) is extremally disconnected,
is ring-theoretically typically easier to study;
e.g., see~\cite[Remark~5.1 and Theorem~5.14]{k-ros-1} for the case when \(A=\CC\).

As an instance of this approach,
we explain some reformulations of
the following conjecture of Rosenberg
made in~\cite[page~458]{Rosenberg90}
and~\cite[Conjecture~2.2]{Rosenberg97}:

\begin{conjecture}[Rosenberg]\label{ros}
  Let \(A\) be a (real) C*-algebra.
  \begin{enumerate}
    \item\label{i:ros_pos}
      For \(n>0\),
      the map
      \(K_{*}(A;\ZZ/n)\to K^{\top}_{*}(A;\ZZ/n)\)
      is an isomorphism for \({*}\geq0\).
    \item\label{i:ros_neg}
      The tautological map
      \(K_{*}(A)\to K_{*}(\Cls{C}([0,1];A))\)
      is an isomorphism for \({*}\leq0\).
  \end{enumerate}
\end{conjecture}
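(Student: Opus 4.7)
The plan is to use \cref{com_ban,com_cat} to recast each part of \cref{ros} as a statement about the solidification unit \(K(A)\to K(A)^{\sol}\). Since the conjecture is open, I do not expect a complete proof; the aim is only to isolate what remains. A preliminary step is to extend \cref{com_ban} from complex Banach algebras to real C*-algebras: the argument should transport directly once complex Bott periodicity is replaced by its real period-eight analogue, with a generator of \(K^{\top}_{8}(\RR)\) taking the role of~\(\beta\).

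For \cref{i:ros_pos}, the real analogue of \cref{com_ban} identifies the target \(K^{\top}(A;\ZZ/n)\) with \(K(A;\ZZ/n)^{\sol}[\beta^{-1}]\), so the comparison map factors as
\[
K(A;\ZZ/n)\longrightarrow K(A;\ZZ/n)^{\sol}\longrightarrow K(A;\ZZ/n)^{\sol}[\beta^{-1}]\simeq K^{\top}(A;\ZZ/n).
\]
Because the right-hand side is Bott-periodic, the second arrow is an equivalence on \(\pi_{\ast}\) in sufficiently high degrees, and a degree shift reduces the conjecture to the solidification unit on the left being a \(\pi_{\ast\geq0}\)-equivalence after reduction mod~\(n\). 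Structurally this is a mod-\(n\) descent statement along the site of extremally disconnected sets; by the discussion preceding the conjecture, \(\Cls{C}(S;A)\) is more tractable than \(\Cls{C}([0,1];A)\), so Suslin--Wodzicki-style rigidity for such rings is the natural input to attempt.

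For \cref{i:ros_neg}, \cref{com_ban} together with homotopy invariance of \(K^{\top}\) gives
\[
K(A)^{\sol}[\beta^{-1}]\simeq K^{\top}(A)\simeq K^{\top}(\Cls{C}([0,1];A))\simeq K(\Cls{C}([0,1];A))^{\sol}[\beta^{-1}],
\]
so the map \(K(A)\to K(\Cls{C}([0,1];A))\) is already an equivalence after solidification and Bott inversion. The conjecture then asks that this equivalence descends to \(\pi_{\leq0}\): equivalently, that the cofiber of this map has vanishing nonpositive homotopy, while --- as just shown --- its solidification vanishes identically.

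The main obstacle is that these reformulations isolate the difficulty but do not dispose of it: for part~\cref{i:ros_pos}, one is still asking for mod-\(n\) descent of algebraic \(K\)-theory along the extremally disconnected site of a C*-algebra, and for part~\cref{i:ros_neg}, for non-degeneracy of the solidification unit in nonpositive degrees on the interval cofiber. What one gains is structural clarity: the solidification unit unwinds, in principle, into pro-systems of \(K\)-theories of finite products of~\(A\), substituting combinatorial data for the interval. What one does not gain is a bypass of how badly non-excisive algebraic \(K\)-theory can behave on ideals of Banach algebras, which remains the essential point.
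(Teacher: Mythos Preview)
The statement is an open conjecture; the paper does not prove it. What the paper does instead is reformulate it as \cref{yos}---discreteness of the condensed abelian groups \(K_{*}(A;\ZZ/n)\) for \({*}\geq0\) and of \(K_{*}(A)\) for \({*}\leq0\)---and then prove the implication \cref{yos} \(\Rightarrow\) \cref{ros}. Your proposal heads in the same direction but is looser for \cref{i:ros_pos} and has a genuine gap for \cref{i:ros_neg}. (Your preliminary step, the real analogue of \cref{com_ban}, is already \cref{cb} in the paper.)

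For \cref{i:ros_pos}, your factorisation through \(K(A;\ZZ/n)^{\sol}\) is correct but essentially tautological: since \(K^{\cn}(A)^{\sol}\simeq\tau_{\geq0}K^{\top}(A)\) by \cref{x9ac0m}, the solidification unit evaluated at~\(*\) \emph{is} the comparison map in nonnegative degrees, so you have restated the conjecture rather than reduced it. The paper's \cref{i:yos_pos} of \cref{yos} is sharper: it asks that the condensed groups \(K_{*}(A;\ZZ/n)\) be discrete, which by \cref{dis} is a concrete condition on \(K\) of \(\Cls{C}(\beta I;A)\) and mentions neither \(K^{\top}\) nor \([0,1]\).

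For \cref{i:ros_neg} there are two problems. First, what you actually establish is that the cofiber of \(K(A)\to K(\Cls{C}([0,1];A))\) vanishes after solidification \emph{and} Bott inversion; the sentence ``its solidification vanishes identically'' overclaims, and in negative degrees \cref{dis_ban,hi_ban} (which concern \(K^{\cn}\)) give you nothing. Second, and this is the point the paper flags explicitly, \(K(\Cls{C}([0,1];A))\) is \emph{not} the value of the condensed spectrum \(K(A)\) at \([0,1]\), so even perfect control of the solidification would not by itself control the ordinary groups \(K_{-n}(\Cls{C}([0,1];A))\). The paper closes this gap with a separate and nontrivial argument (\cref{yros}, proven via \cref{x87vf0}): assuming the condensed groups \(K_{-n}(A)\), \(K_{-n+1}(A)\), and \(K_{-n}(\Cls{C}([0,1];A))\) are discrete, a rigidity argument with blowup squares of compacta forces the tautological map on \(\pi_{-n}\) at the point to be an isomorphism. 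Your outline does not touch this step, and without it the passage from ``good behaviour after solidification'' to \cref{i:ros_neg} is not a reduction at all.
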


We use our main results
to formulate a similar problem,
which implies \cref{ros},
without mentioning \(K^{\top}\) or \([0,1]\);
see \cref{yos}.

\begin{remark}\label{xdpyca}
  We can also study the \emph{semitopological \(K\)-theory}
  of a complex Banach algebra~\(A\):
  We define \(K^{\cn,\st}(A)\) as \(K^{\cn}(A)^{\sol}\),
  which is discrete by \cref{com_ban}.
  It also coincides with the geometric realization
  of \(K^{\cn}(\Cls{C}(\Delta^{\bullet};A))\)
  by \cref{hi_ban}.
  If \cref{i:yos_neg} of \cref{yos} is true,
  \(K(A)^{\sol}\) is also discrete and
  we can define \(K^{\st}(A)\) as \(K(A)^{\sol}\)
  (or we can unconditionally consider \(K(A)^{\sol}(*)\)).

  Furthermore,
  we can define its \emph{integral cohomology} as
  \begin{equation*}
    \ZZ^{\top}(A)
    =
    \cofib(\Sigma^2K^{\st}(A)\xrightarrow{\beta}K^{\st}(A))
    \in\D(\ZZ).
  \end{equation*}
  This captures the information
  lost in the process of obtaining \(K^{\top}(A)\)
  by inverting~\(\beta\).
Note that this ``cohomology theory''
  for C*-algebras is not \(\Cls{K}\)-stable\footnote{We write \(\Cls{K}\) for the C*-algebra of compact operators
    on a separable Hilbert space.
  }\footnote{Note that
    this invariant makes sense for nonunital C*-algebras
    by \cref{exc}.
  } since \(\ZZ^{\top}(\Cls{K})\) vanishes
  by Suslin--Wodzicki's theorem~\cite{SuslinWodzicki92}.
Still, it is \(\Mat_{2}\)-stable by definition.

Consider the case where \(A=\Cls{C}(X;\CC)\) for a compactum~\(X\).
  By applying Cortiñas--Thom's theorem~\cite{CortinasThom12},
  we find that \(K^{\st}(A)\simeq\Gamma(X;\cst{\ku})\)
  and thus \(\ZZ^{\top}(A)\simeq\Gamma(X;\cst{\ZZ})\).
\end{remark}

\subsection*{Organization}\label{ss:outline}

We give background on condensed mathematics in \cref{s:cm}
in order to study
the solidified \(K\)-theory
of a condensed \(\E_{1}\)-algebra over~\(\RR\).
It has nice properties under reasonable assumptions:
One is truncatedness
and the other is discreteness,
which we see in \cref{s:tr,s:dis},
respectively.
We then move on to the comparison results;
we prove (the real versions of) \cref{com_cat,com_ban}
in \cref{s:stop,s:op}, respectively.
Lastly, in \cref{s:ros},
we pose a conjecture that implies \cref{ros}
and demonstrate the easiest case thereof.

\subsection*{Acknowledgments}\label{ss:ack}

I thank Peter Scholze for helpful discussions
and useful comments on a draft.
I thank the Max Planck Institute for Mathematics
for its financial support.

Most of the main results here were first announced
in my talk at the Arbeitstagung held at the Institute in June 2023.
I apologize for the delay in writing this paper.

\section{Background on condensed mathematics}\label{s:cm}

In this section,
we explain the basic condensed mathematics
we need in this paper.
We review the notions of
condensed object, discrete object,
and solid spectrum
in \cref{ss:con,ss:dis,ss:sol}, respectively.
\Cref{ss:kr}
contains a useful observation concerning (topos-theoretic) points.

\subsection{Recollection: condensed animas}\label{ss:con}

We recall the notion of condensed objects.
We follow~\cite[Section~2]{Condensed}
and~\cite[Section~2.1]{Mann}
for subtleties about cutoff cardinals.

For an infinite cardinal~\(\kappa\),
we write \(\Cat{Cpt}_{\kappa}\) for the category
of compacta of weight~\(<\kappa\)\footnote{When \(\kappa\) is regular,
  this coincides with
  the category of \(\kappa\)-cocompact compacta;
  see, e.g.,~\cite[Proposition~2.11]{k-ros-1}.
}.
We equip \(\Cat{Cpt}_{\kappa}\) with the topology
such that
a sieve on~\(X\) is a cover
if and only if it contains a \emph{finite} set \(\{X_i\to X\}_{i\in I}\)
such that \(\coprod_{i\in I}X_i\to X\) is surjective.
We write \(\Cat{ConAni}_{\kappa}\)
for the \(\infty\)-category
of anima-valued hypersheaves on \(\Cat{Cpt}_{\kappa}\)
and call its objects \emph{\(\kappa\)-condensed anima}.
For a presentable \(\infty\)-category~\(\cat{C}\),
we define the \(\infty\)-category of \emph{\(\kappa\)-condensed objects}
in~\(\cat{C}\) to be
\begin{equation*}
  \Cat{ConAni}_{\kappa}\otimes\cat{C}
  \simeq
  \Shv^{\hyp}(\Cat{Cpt}_{\kappa};\cat{C}).
\end{equation*}
We write \(\Cat{TDis}_{\kappa}\subset\Cat{Cpt}_{\kappa}\)
for the full subcategory
spanned by totally disconnected ones.
Since \(\Cat{TDis}_{\kappa}\) is a basis of \(\Cat{Cpt}_{\kappa}\),
by, e.g.,~\cite[Corollary~A.7]{Aoki23},
an object of \(\Cat{ConAni}_{\kappa}\)
is equivalently described as a hypersheaf on
\(\Cat{TDis}_{\kappa}\).

Suppose that \(\kappa\) is strong limit;
i.e.,
a nonzero cardinal
satisfying \(2^{\lambda}<\kappa\)
for all \(\lambda<\kappa\).
In this case,
a compactum is inside \(\Cat{Cpt}_{\kappa}\)
if and only if its underlying set 
is of cardinality~\(<\kappa\)
and hence
the full subcategory \(\Cat{EDis}_{\kappa}\subset\Cat{Cpt}_{\kappa}\)
spanned by extremally disconnected ones
also forms a basis.
What is notable is that
the (hyper)sheaf condition on \(\Cat{EDis}_{\kappa}\) is
easily described due to Gleason's theorem~\cite{Gleason58}:
A covariant functor from \(\Cat{EDis}_{\kappa}\)
to an \(\infty\)-category having limits
is a sheaf if and only if
it carries finite coproducts to products.
Therefore, for a presentable \(\infty\)-category~\(\cat{C}\),
a \(\kappa\)-condensed object
is equivalently described as a functor
\(\Cat{EDis}_{\kappa}^{\op}\to\cat{C}\)
preserving finite products.

For infinite cardinals \(\kappa\leq\lambda\),
the restriction functor \(\Cat{ConAni}_{\lambda}\to\Cat{ConAni}_{\kappa}\)
has a left adjoint.
The \(\infty\)-category of \emph{condensed objects} in~\(\cat{C}\)
is
\begin{equation}
  \label{e:lwgc9}
  \injlim_{\kappa}
  \Cat{ConAni}_{\kappa}\otimes\cat{C},
\end{equation}
where \(\kappa\) runs over all infinite cardinals.
We are ultimately interested in this \(\infty\)-category
and
it is not the case that the notion of \(\kappa\)-condensed objects
is useful for any~\(\kappa\).
We explain certain cases where it is useful.
In particular,
we see that
we can replace \(\injlim_{\kappa}\) in \cref{e:lwgc9}
with \(\bigcup_{\kappa}\)
by only considering
sufficiently large regular cardinals~\(\kappa\):

\begin{proposition}\label{xuf9uh}
Fix infinite regular cardinals~\(\aleph_{1}\leq\kappa\leq\lambda\).
  Suppose that
  \(\aleph_{1}\)-small limits commute with \(\kappa\)-filtered colimits
  in a presentable \(\infty\)-category~\(\cat{C}\).
  Then the functor
  \begin{equation*}
    \Shv^{\hyp}(\Cat{TDis}_{\kappa};\cat{C})
    \simeq
    \Cat{ConAni}_{\kappa}\otimes\cat{C}
    \to
    \Cat{ConAni}_{\lambda}\otimes\cat{C}
    \simeq
    \Shv^{\hyp}(\Cat{TDis}_{\lambda};\cat{C})
  \end{equation*}
  can be concretely
  described as the left Kan extension
  along
  \(\Cat{TDis}_{\kappa}\to\Cat{TDis}_{\lambda}\).
  The same holds
  when we replace \(\Cat{TDis}\) with \(\Cat{Cpt}\).
\end{proposition}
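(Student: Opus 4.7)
The plan is to prove the statement by showing that the presheaf-level left Kan extension $j_!\colon\PShv(\Cat{TDis}_\kappa;\cat{C})\to\PShv(\Cat{TDis}_\lambda;\cat{C})$ along the inclusion $j\colon\Cat{TDis}_\kappa\hookrightarrow\Cat{TDis}_\lambda$ already sends hypersheaves to hypersheaves. This suffices, because $j_!$ is left adjoint to restriction at the presheaf level and hypersheafification is left adjoint to the inclusion of hypersheaves into presheaves, so their composite computes the left adjoint we want; if $j_!F$ is already a hypersheaf, then hypersheafification acts as the identity on it, and we recover the left Kan extension formula.

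The key step is to express $(j_!F)(T)$, for $T\in\Cat{TDis}_\lambda$, as a $\kappa$-filtered colimit of values of $F$. By Stone duality the Boolean algebra $B(T)$ of clopens of $T$ has cardinality ${<}\lambda$ and is the $\kappa$-directed union of its Boolean subalgebras of cardinality ${<}\kappa$; dually $T\simeq\projlim_\alpha T_\alpha$ is a $\kappa$-cofiltered limit of surjections $T\twoheadrightarrow T_\alpha$ with $T_\alpha\in\Cat{TDis}_\kappa$, indexed by a $\kappa$-directed poset. I would then exhibit a cofinal $\kappa$-filtered subdiagram of the comma category $j/T$ and deduce $(j_!F)(T)\simeq\injlim_\alpha F(T_\alpha)$, using the hypersheaf property of $F$ on the \v{C}ech nerves of the surjections in this tower to absorb the ambiguity of choosing maps $S\to T$ into the colimit.

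With this identification in place, verifying hypersheaf descent for $j_!F$ against a hypercover $U_\bullet\to X$ in $\Cat{TDis}_\lambda$ amounts to comparing two $\aleph_1$-small totalizations over $\Delta$. By hypothesis such $\aleph_1$-small limits commute with the $\kappa$-filtered colimits defining each $(j_!F)(U_n)$, so descent reduces to the corresponding statement for $F$ applied to pulled-back hypercovers on the approximating quotients $T_\alpha$ of $X$, which holds because $F$ is a hypersheaf on $\Cat{TDis}_\kappa$. The same strategy delivers the $\Cat{Cpt}$ version: since $\Cat{TDis}_\kappa$ is a basis of $\Cat{Cpt}_\kappa$, the hypersheaf categories agree, and the argument transfers formally.

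The main obstacle is the cofinality step giving $(j_!F)(T)\simeq\injlim_\alpha F(T_\alpha)$. The naive candidate for a cofinal subdiagram of $j/T$---closed totally disconnected subspaces of $T$ of weight ${<}\kappa$, obtained from $(S,f)\in j/T$ by taking the image $f(S)$---is not $\kappa$-directed, because the closure of countably many singletons in $\beta\NN$ already has weight $2^{\aleph_0}$, which can exceed $\kappa$. The fix is to work on the Stone-dual side, where $\kappa$-directedness is manifest at the level of Boolean subalgebras of $B(T)$ of cardinality ${<}\kappa$, and then to invoke hypersheaf descent for $F$ to convert the Kan-extension colimit into a colimit over the resulting quotient tower $(T_\alpha)$.
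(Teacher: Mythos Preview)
There is a genuine error in your identification of the indexing category for the Kan extension. Presheaves are contravariant, so the pointwise formula for $(j_!F)(T)$ is a colimit over $j^{\op}\downarrow T$, whose objects are pairs $(S,\,T\to S)$ with $S\in\Cat{TDis}_\kappa$; this is the \emph{under}category $(\Cat{TDis}_\kappa)_{T/}$, not the category of maps $f\colon S\to T$ you describe. Your ``main obstacle''---that images $f(S)\subset T$ need not form a $\kappa$-directed system---is an artifact of working in the wrong comma category. In fact the Stone-duality quotients $T\twoheadrightarrow T_\alpha$ you produce are already objects of the correct undercategory, and once one works there they are visibly cofinal; no auxiliary ``hypersheaf descent for $F$'' is needed to identify the Kan-extension colimit with $\injlim_\alpha F(T_\alpha)$.

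The paper's route is shorter and avoids Stone duality entirely: since $\Cat{TDis}_\kappa$ and $\Cat{Cpt}_\kappa$ are the full subcategories of $\kappa$-cocompact objects, they are closed under $\kappa$-small limits, and therefore every undercategory $(\Cat{TDis}_\kappa)_{T/}$ or $(\Cat{Cpt}_\kappa)_{T/}$ is automatically $\kappa$-cofiltered. This single sentence handles both sites uniformly, whereas you deduce the $\Cat{Cpt}$ case only indirectly via the basis comparison with $\Cat{TDis}$. Once the Kan-extension colimit is known to be $\kappa$-filtered, the hypothesis on $\cat{C}$ lets it commute with the $\aleph_1$-small limits in the hypersheaf condition; your descent paragraph has this part of the logic right in spirit, though ``pulling back hypercovers to $T_\alpha$'' still hides the step of uniformizing the $\kappa$-filtered index categories across the simplicial levels.
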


\begin{proof}
It suffices to show that
  the category
  \((\Cat{TDis}_{\kappa})_{S'/}\)
  is \(\kappa\)-cofiltered
  for \(S'\in\Cat{TDis}_{\lambda}\)
  and the same claim for \(\Cat{Cpt}\).
  But since \(\Cat{TDis}_{\kappa}\)
  and \(\Cat{Cpt}_{\kappa}\)
  are the full subcategories spanned by \(\kappa\)-cocompact objects,
  they have \(\kappa\)-small limits.
\end{proof}

\begin{corollary}\label{xj8jo5}
  Let \(\cat{C}\) be a presentable \(\infty\)-category.
  The \(\infty\)-category of condensed objects
  can be identified with
  the full subcategory of \(\Fun(\Cat{TDis}^{\op},\cat{C})\)
  or \(\Fun(\Cat{Cpt}^{\op},\cat{C})\)
  spanned by accessible functors
  satisfying the hypersheaf condition.
\end{corollary}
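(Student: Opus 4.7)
The plan is to use \cref{xuf9uh} to express the filtered colimit~\eqref{e:lwgc9} as a union of full subcategories of $\Fun(\Cat{TDis}^{\op}, \cat{C})$ and then identify this union with the accessible hypersheaves. The $\Cat{Cpt}$ case will run in parallel by the second half of \cref{xuf9uh}.

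First, I would restrict the colimit in~\eqref{e:lwgc9} to regular cardinals $\kappa$ large enough that \cref{xuf9uh} applies to $\cat{C}$; such $\kappa$ are cofinal. For such $\kappa \leq \lambda$, each transition functor $\Cat{ConAni}_\kappa \otimes \cat{C} \to \Cat{ConAni}_\lambda \otimes \cat{C}$ is, by \cref{xuf9uh}, the left Kan extension along the fully faithful inclusion $\Cat{TDis}_\kappa \hookrightarrow \Cat{TDis}_\lambda$, and therefore itself fully faithful. The colimit is thus a filtered union, and via left Kan extension along $\Cat{TDis}_\kappa \hookrightarrow \Cat{TDis}$ it embeds fully faithfully into $\Fun(\Cat{TDis}^{\op}, \cat{C})$.

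For the forward inclusion, any $F$ in the image of the $\kappa$-level is a left Kan extension from a small $\infty$-category and is therefore accessible; its restriction to each $\Cat{TDis}_\lambda$ with $\lambda \geq \kappa$ is a hypersheaf by \cref{xuf9uh}, so $F$ itself is a hypersheaf. For the reverse inclusion, given an accessible hypersheaf $F$ on $\Cat{TDis}^{\op}$, I would pick $\kappa$ large enough that $F$ preserves $\kappa$-filtered colimits and \cref{xuf9uh} applies. Then $F|_{\Cat{TDis}_\kappa}$ is manifestly a hypersheaf, defining an object of $\Cat{ConAni}_\kappa \otimes \cat{C}$, and the key task is to show that $F$ coincides with the left Kan extension of this restriction. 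Via Stone duality, every $S' \in \Cat{TDis}$ is a $\kappa$-cofiltered limit of objects of $\Cat{TDis}_\kappa$ (the dual Boolean algebra is the $\kappa$-filtered colimit of its subalgebras of cardinality $<\kappa$), so in $\Cat{TDis}^{\op}$ it is a $\kappa$-filtered colimit of such; accessibility of $F$ then produces the desired pointwise left Kan extension formula at $S'$.

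I expect the main obstacle to be this last step: verifying that $\Cat{TDis}_\kappa$ generates $\Cat{TDis}$ under $\kappa$-cofiltered limits, and that the corresponding indexing category matches the one governing the pointwise Kan extension. The generation reduces via Stone duality to a standard fact about Boolean algebras (requiring regularity of $\kappa$ to ensure that the collection of $<\kappa$-small subalgebras is itself $\kappa$-filtered), and the compatibility of the indexing categories is already contained in the argument proving \cref{xuf9uh}.
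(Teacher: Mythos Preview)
Your proposal is correct and follows the approach the paper intends: the paper states this corollary without proof, but the sentence immediately preceding \cref{xuf9uh} (``we can replace $\injlim_{\kappa}$ in \cref{e:lwgc9} with $\bigcup_{\kappa}$ by only considering sufficiently large regular cardinals~$\kappa$'') is exactly the skeleton you flesh out. One minor remark: for the last step you invoke Stone duality to see that $(\Cat{TDis}_\kappa)_{S'/}$ is $\kappa$-cofiltered and that $S'$ is its limit, whereas the paper's proof of \cref{xuf9uh} obtains the cofilteredness uniformly for both $\Cat{TDis}$ and $\Cat{Cpt}$ from the fact that these are the $\kappa$-cocompact objects and hence closed under $\kappa$-small limits; using that same observation (together with density of finite sets) handles the $\Cat{Cpt}$ case without a separate argument.
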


\begin{corollary}\label{xloozg}
  When \(X\) and~\(Y\) are condensed objects in
  a presentable \(\infty\)-category,
  \(\Map(Y,X)\) is small.
\end{corollary}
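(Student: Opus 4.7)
The plan is to realize both $X$ and $Y$ as objects of $\Cat{ConAni}_{\kappa}\otimes\cat{C}$ for a common regular cardinal $\kappa\geq\aleph_{1}$, and then invoke the fact that this latter $\infty$-category is presentable, so in particular locally small.

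By \cref{xj8jo5}, each of $X$ and $Y$ corresponds to an accessible hypersheaf on $\Cat{TDis}^{\op}$. Accessibility lets us choose an infinite regular cardinal $\kappa\geq\aleph_{1}$ large enough that both $X$ and $Y$ come from objects $X_{\kappa},Y_{\kappa}\in\Cat{ConAni}_{\kappa}\otimes\cat{C}$ under the transition functor $\Cat{ConAni}_{\kappa}\otimes\cat{C}\to\Cat{ConAni}\otimes\cat{C}$. By \cref{xuf9uh} this transition functor is the left Kan extension along $\Cat{TDis}_{\kappa}\hookrightarrow\Cat{TDis}$, and hence is left adjoint to the restriction functor; since the inclusion of sites is fully faithful, the composite restriction $\circ$ LKE is the identity, so LKE itself is fully faithful. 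Using this adjunction I would then identify
\begin{equation*}
  \Map(Y,X)
  \simeq
  \Map_{\Cat{ConAni}_{\kappa}\otimes\cat{C}}(Y_{\kappa},X_{\kappa}).
\end{equation*}
The right-hand side is a mapping space in the presentable $\infty$-category $\Shv^{\hyp}(\Cat{TDis}_{\kappa};\cat{C})$ (presentable because $\Cat{TDis}_{\kappa}$ is essentially small and $\cat{C}$ is presentable), so it is small.

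The only point demanding care is the existence of a common $\kappa$ accommodating both $X$ and $Y$, together with full faithfulness of $\Cat{ConAni}_{\kappa}\otimes\cat{C}\hookrightarrow\Cat{ConAni}\otimes\cat{C}$; both follow formally from the two preceding results, so there is no genuine obstacle and the corollary is essentially a bookkeeping consequence of \cref{xuf9uh,xj8jo5}.
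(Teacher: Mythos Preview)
Your argument is correct and matches the paper's intended reasoning: the corollary is stated without proof precisely because it is meant to follow immediately from \cref{xuf9uh,xj8jo5} in the way you describe. One small point worth making explicit: when choosing \(\kappa\), you must take it large enough not only so that \(X\) and \(Y\) are \(\kappa\)-condensed, but also so that the hypothesis of \cref{xuf9uh} (that \(\aleph_{1}\)-small limits commute with \(\kappa\)-filtered colimits in \(\cat{C}\)) holds; this is possible for any presentable \(\cat{C}\), and is exactly what the paper means by ``sufficiently large regular cardinals~\(\kappa\)'' in the sentence preceding \cref{xuf9uh}.
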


We then consider
describing condensed objects using
extremally disconnected sets,
which is more subtle:

\begin{proposition}\label{x9djsw}
Fix infinite strong limit cardinals~\(\kappa\leq\lambda\).
  Suppose that
  finite products commute with \(\cf(\kappa)\)-filtered colimits
  in a presentable \(\infty\)-category~\(\cat{C}\).
  Then the functor
  \begin{equation*}
    \Fun^{\times}(\Cat{EDis}_{\kappa}^{\op},\cat{C})
    \simeq
    \Cat{ConAni}_{\kappa}\otimes\cat{C}
    \to
    \Cat{ConAni}_{\lambda}\otimes\cat{C}
    \simeq
    \Fun^{\times}(\Cat{EDis}_{\lambda}^{\op},\cat{C})
  \end{equation*}
  can be concretely
  described as the left Kan extension
  along
  \(\Cat{EDis}_{\kappa}\to\Cat{EDis}_{\lambda}\).
  In particular,
  it is fully faithful.
\end{proposition}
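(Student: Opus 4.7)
The plan is to identify the functor in question with the pointwise left Kan extension \(\operatorname{Lan}_i\) along \(i \colon \Cat{EDis}_\kappa^{\op} \hookrightarrow \Cat{EDis}_\lambda^{\op}\). Under the description \(\Cat{ConAni}_\bullet \otimes \cat{C} \simeq \Fun^\times(\Cat{EDis}_\bullet^{\op}, \cat{C})\), restriction along \(i\) is just restriction of functors, so its left adjoint in the product-preserving setting coincides with the naive \(\operatorname{Lan}_i\) \emph{provided} \(\operatorname{Lan}_i\) sends product-preserving functors to product-preserving functors. Granting this, the fully faithfulness of the transfer is automatic from that of \(i\). The core task is thus this preservation property.

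Let \(F \in \Fun^\times(\Cat{EDis}_\kappa^{\op}, \cat{C})\) and \(S' \in \Cat{EDis}_\lambda\). The pointwise formula reads
\[
  (\operatorname{Lan}_i F)(S') \simeq \operatorname*{colim}_{\cat{I}_{S'}} F,
\]
where \(\cat{I}_{S'} = (S'/\Cat{EDis}_\kappa)^{\op}\) has objects \((S \in \Cat{EDis}_\kappa, f \colon S' \to S)\) and morphisms \((S_1, f_1) \to (S_2, f_2)\) represented by maps \(\rho \colon S_2 \to S_1\) in \(\Cat{EDis}_\kappa\) satisfying \(\rho f_2 = f_1\). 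I claim \(\cat{I}_{S'}\) is \(\cf(\kappa)\)-filtered: given a \(\mu\)-small diagram \(\{(S_j, f_j)\}_j\) with \(\mu < \cf(\kappa)\), the compactum \(\prod_j S_j\) has weight bounded by \(\mu \cdot \sup_j w(S_j) < \kappa\), and by the strong-limit hypothesis admits an ED cover \(T \twoheadrightarrow \prod_j S_j\) with \(T \in \Cat{EDis}_\kappa\); the lift of \((f_j)_j \colon S' \to \prod_j S_j\) to \(T\) (which exists since \(S'\) is ED) yields a common upper bound. Parallel arrows are coequalized analogously using an ED cover of the (closed) equalizer.

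To check finite-product-preservation of \(\operatorname{Lan}_i F\), I compute
\[
  (\operatorname{Lan}_i F)(S'_1) \times (\operatorname{Lan}_i F)(S'_2)
  \simeq \operatorname*{colim}_{\cat{I}_{S'_1} \times \cat{I}_{S'_2}} F(S_1) \times F(S_2)
  \simeq \operatorname*{colim}_{\cat{I}_{S'_1} \times \cat{I}_{S'_2}} F(S_1 \sqcup S_2),
\]
where the first equivalence invokes the hypothesis on commuting finite products past \(\cf(\kappa)\)-filtered colimits and the second uses product-preservation of \(F\). It then remains to show the coproduct functor \(\cat{I}_{S'_1} \times \cat{I}_{S'_2} \to \cat{I}_{S'_1 \sqcup S'_2}\), \(((S_1, f_1), (S_2, f_2)) \mapsto (S_1 \sqcup S_2, f_1 \sqcup f_2)\), is cofinal. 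For \((T, g) \in \cat{I}_{S'_1 \sqcup S'_2}\), non-emptiness of the comma category is witnessed by \(S_i = T\), \(f_i = g|_{S'_i}\), and the fold \(T \sqcup T \to T\); connectedness is obtained by combining two witnesses via ED covers of products and equalizers. The nullary case is immediate: \((\operatorname{Lan}_i F)(\emptyset) \simeq F(\emptyset) \simeq *\), because \(\emptyset\) is terminal in \(\cat{I}_\emptyset\).

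The main obstacle is the connectedness half of the cofinality step, where the two witnesses \(h^a, h^b\) must be reconciled via an equalizer construction in \(\Cat{EDis}_\kappa\)-covers. Everything else is essentially formal once \(\cf(\kappa)\)-filteredness is in hand.
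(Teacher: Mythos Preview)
Your approach is essentially the paper's: it too reduces everything to showing that \((\Cat{EDis}_{\kappa})_{S'/}\) is \(\cf(\kappa)\)-cofiltered for \(S'\in\Cat{EDis}_{\lambda}\), and simply cites the proof of~\cite[Proposition~2.9]{Condensed} for that fact. Your argument for the \(\cf(\kappa)\)-filteredness of \(\cat{I}_{S'}\) is correct and matches what is done there.

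The weak point is your cofinality step for the coproduct functor \(G\colon\cat{I}_{S'_1}\times\cat{I}_{S'_2}\to\cat{I}_{S'_1\sqcup S'_2}\). You only argue ``connectedness'' of the comma categories, but since the colimits are taken in the \(\infty\)-category~\(\cat{C}\), you need weak contractibility (Quillen's Theorem~A in the form of~\cite[Theorem~4.1.3.1]{LurieHTT}); connectedness of a \(1\)-category is strictly weaker. Your sketch ``combining two witnesses via ED covers of products and equalizers'' is too vague to assess, and there is no reason for the comma categories to be filtered in general, so the usual shortcut does not apply.

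The clean fix, which also explains why the paper can be so terse, is to observe that the opposite functor
\[
  G^{\op}\colon(S'_1/\Cat{EDis}_\kappa)\times(S'_2/\Cat{EDis}_\kappa)\to(S'_1\sqcup S'_2)/\Cat{EDis}_\kappa
\]
is a \emph{left adjoint}: its right adjoint sends \((T,g)\) to \(((T,g|_{S'_1}),(T,g|_{S'_2}))\), as one checks directly from the universal property of \(\sqcup\). Hence \(G^{\op}\) is initial, i.e.\ \(G\) is cofinal, with no further work. With this in hand, your computation
\[
  (\operatorname{Lan}_i F)(S'_1\sqcup S'_2)
  \simeq\operatorname*{colim}_{\cat{I}_{S'_1}\times\cat{I}_{S'_2}}F(S_1\sqcup S_2)
  \simeq\operatorname*{colim}_{\cat{I}_{S'_1}\times\cat{I}_{S'_2}}F(S_1)\times F(S_2)
  \simeq(\operatorname{Lan}_i F)(S'_1)\times(\operatorname{Lan}_i F)(S'_2)
\]
goes through, using the \(\cf(\kappa)\)-filteredness only in the last step to invoke the hypothesis on~\(\cat{C}\).
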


\begin{proof}
  It suffices to show that
  the category \((\Cat{EDis}_{\kappa})_{S'/}\)
  is \(\cf(\kappa)\)-cofiltered
  for \(S'\in\Cat{EDis}_{\lambda}\).
  This was proven in the proof of~\cite[Proposition~2.9]{Condensed}.
\end{proof}

\begin{example}\label{xba8hv}
  Assume that
  finite products commute with filtered colimits
  in a presentable \(\infty\)-category~\(\cat{C}\);
  e.g., \(\cat{C}\) is either compactly generated or stable.
  Then by \cref{x9djsw} and \(\Cat{ConAni}_{\aleph_{0}}\simeq\Cat{Ani}\),
  the \(\infty\)-category~\(\cat{C}\) 
  fully faithfully sits inside
  the \(\infty\)-category of condensed objects.
  We come back to this point in \cref{ss:dis}.
\end{example}

\subsection{Conservativity of ultrafilters}\label{ss:kr}

Recall that
for an \(\infty\)-topos~\(\cat{X}\),
the \(\infty\)-category of points \(\Pt(\cat{X})\)
is the full subcategory of \(\Fun(\Cat{Ani},\cat{X})\)
spanned by left-exact colimit-preserving functors.
Concretely,
a point of \(\Cat{ConAni}_{\kappa}\)
is a \(\Pro\)-object~\(X\)
in \(\Cat{EDis}_{\kappa}\)
such that
\begin{equation*}
  \coprod_{i=1}^{n}
  \Map(P,S_{i})
  \to
  \Map(P,S_{1}\amalg\dotsb\amalg S_{n})
\end{equation*}
is surjective
for any extremally disconnected sets~\(S_{1}\), \dots,~\(S_{n}\).
Equivalently,
it is a \(\Pro\)-object~\(P\) such that
the maps
\begin{align}
  \label{e:1hxm5}
  \emptyset
  &\to
  \Map(P,\emptyset),&
  \Map(P,{*})\amalg\Map(P,{*'})
  &\to
  \Map(P,{*}\amalg{*'})
\end{align}
are equivalences.

\begin{example}\label{xyw599}
  Let \(X\) be an object of \(\Cat{EDis}_{\kappa}\)
  and \(x\in X\) be a point.
  Then we write \(X_{x}^{\dag}\)
  for the \(\Pro\)-object
  \((Z)_{Z}\)
  where \(Z\) runs over clopen subsets of~\(X\)
  containing~\(x\).
  It is easy to see that this is a point of \(\Cat{ConAni}_{\kappa}\)
  by checking that the maps \cref{e:1hxm5}
  are surjective.
\end{example}

Considering points of an \(\infty\)-topos is useful
because of Deligne's completeness theorem.
Here,
we see the following conservativity result:

\begin{theorem}\label{xzifdm}
  Fix an infinite strong limit cardinal~\(\kappa\).
  A map \(F\to F'\)
  between \(\kappa\)-condensed animas
  is an equivalence if and only if
  the map
  \begin{equation*}
    \injlim_{\mu(J)=1}F(\beta J)
    \to
    \injlim_{\mu(J)=1}F'(\beta J)
  \end{equation*}
  is an equivalence
  for any ultrafilter~\(\mu\) on
  any set~\(I\) of cardinality \(<\kappa\),
\end{theorem}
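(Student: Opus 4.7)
\emph{Proof plan.} My plan is to invoke Deligne's completeness theorem. The site $\Cat{EDis}_\kappa$ (equivalently $\Cat{Cpt}_\kappa$) equipped with the finite-jointly-surjective topology is coherent, so the underlying $1$-topos of sheaves has enough points; combined with hypercompleteness, this upgrades to the $\infty$-topos $\Cat{ConAni}_\kappa$: a morphism of $\kappa$-condensed animas is an equivalence if and only if it induces equivalences on stalks at every point of $\Cat{ConAni}_\kappa$. The theorem therefore reduces to showing that the stalks at the ultrafilter points already form a conservative family.

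As recalled in the discussion preceding the statement, a point of $\Cat{ConAni}_\kappa$ is a pro-object $(S_\alpha)$ in $\Cat{EDis}_\kappa$ whose represented functor preserves finite coproducts in the sense of \cref{e:1hxm5}, with stalk functor $F \mapsto \injlim_\alpha F(S_\alpha)$. For an ultrafilter $\mu$ on a set $I$ with $|I|<\kappa$, the pro-object $P_\mu := (\beta J)_{\mu(J) = 1}$ is exactly the clopen-neighborhood point of \cref{xyw599} applied to $\beta I$ at $\mu$: clopen subsets of $\beta I$ correspond bijectively to subsets of $I$, and the ultrafilter axioms on $\mu$ translate directly into the conditions \cref{e:1hxm5}. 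Its stalk functor is precisely the colimit $\injlim_{\mu(J) = 1} F(\beta J)$ appearing in the statement.

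The heart of the proof is then to verify that every point of $\Cat{ConAni}_\kappa$ has the same stalk functor as some ultrafilter point $P_\mu$. Given an arbitrary point $(S_\alpha)$, each $S_\alpha$ is by Gleason's theorem a retract of $\beta I_\alpha$ for a set $I_\alpha$ with $|I_\alpha| < \kappa$, using that $\kappa$ is strong limit and hence that $|S_\alpha|<\kappa$. The finite-coproduct condition forces the point to select, compatibly in $\alpha$, an ultrafilter on each Boolean algebra $\mathcal{P}(I_\alpha)$ of clopen subsets of $\beta I_\alpha$. Using the strong-limit hypothesis on $\kappa$ to bound cardinalities, one assembles these into a single ultrafilter $\mu$ on a set $I$ of cardinality $<\kappa$ whose associated $P_\mu$ cofinally refines $(S_\alpha)$ in $\Pro(\Cat{EDis}_\kappa)$, and the cofinal refinement guarantees an equivalence of stalk functors.

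The main obstacle is this final assembly step: extracting from a pro-point, whose index category is a~priori unbounded in size, a single ultrafilter on a single set of cardinality $<\kappa$ realizing the same stalk functor. This requires a careful application of the strong-limit hypothesis on $\kappa$, together with a verification that the Stone-duality-style compatibility between the pro-system and the Boolean-algebra data is preserved under passing to the cofinal refinement. Granting this, Deligne's theorem completes the proof of the claimed conservativity.
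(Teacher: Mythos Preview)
Your overall strategy---reduce to Deligne's completeness theorem and then analyze the points of \(\Cat{ConAni}_{\kappa}\)---is exactly the paper's. You also correctly identify the ultrafilter pro-objects \(P_{\mu}=(\beta J)_{\mu(J)=1}\) with the clopen-neighborhood points \((\beta I)_{\mu}^{\dag}\) of \cref{xyw599}. The divergence is in what you try to prove about arbitrary points.

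You aim to show that \emph{every} point of \(\Cat{ConAni}_{\kappa}\) has the same stalk functor as a single ultrafilter point \(P_{\mu}\), and you yourself flag the ``assembly step'' as the main obstacle, only to grant it without argument. This is a genuine gap, and the sketch you give for it does not obviously work. The point condition on a pro-object \((S_{\alpha})\) produces an ultrafilter on \(\operatorname{Clop}(S_{\alpha})\), not on \(\mathcal{P}(I_{\alpha})\); passing through a Gleason retraction \(\beta I_{\alpha}\to S_{\alpha}\) requires a noncanonical choice of preimage, and these choices need not be compatible with the transition maps of the pro-system. Even if one arranges compatibility, producing a \emph{single} set \(I\) with \(\lvert I\rvert<\kappa\) whose ultrafilter point cofinally refines the given pro-system is delicate when \(\kappa\) is singular: the index category of the pro-object has no a~priori bound below \(\kappa\), and a naive disjoint union \(\coprod_{\alpha}I_{\alpha}\) can have cardinality \(\kappa\).

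The paper avoids this entirely by proving something weaker but sufficient. It does \emph{not} claim that every point is an ultrafilter point. Instead it argues in two steps: first, for any \(X\in\Cat{EDis}_{\kappa}\) and \(x\in X\), the clopen-neighborhood point \(X_{x}^{\dag}\) is a \emph{retract} of some \((\beta I)_{\mu}^{\dag}\), so an equivalence on all ultrafilter stalks forces an equivalence on all \(X_{x}^{\dag}\)-stalks. Second---and this is the substantive part---any point \(P=\projlim_{j}X_{j}\) admits a unique compatible system of points \(x_{j}\in X_{j}\) (using the conditions \cref{e:1hxm5} to rule out zero or two such points), and one checks \(P\simeq\projlim_{j}(X_{j})_{x_{j}}^{\dag}\) in \(\Pro(\Cat{EDis}_{\kappa})\). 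Thus every point is a cofiltered limit of retracts of ultrafilter points, and conservativity follows. No assembly into a single ultrafilter is needed.
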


\begin{proof}
Since the ``only if'' direction is clear,
  we prove the ``if'' part.

  The equation
  is the evaluation of \(F\to F'\) at
  the \(\Pro\)-object \((\beta I)_{\mu}^{\dag}\).
  Note that
  for any point~\(x\) of an object~\(X\) of \(\Cat{EDis}_{\kappa}\),
  the \(\Pro\)-object \(X_{x}^{\dag}\) is a retract
  of \((\beta I)_{\mu}^{\dag}\)
  for some~\(I\) and~\(\mu\).
  By Deligne's completeness theorem,
  it suffices to show that
  any point is a cofiltered limit in \(\Pro(\Cat{EDis}_{\kappa})\)
  of points of the form~\(X_{x}^{\dag}\).

  We consider a point~\(P\).
  Let \((X_{j})_{j}\)
  be a codirected diagram in \(\Cat{EDis}_{\kappa}\)
  whose limit is~\(P\).
  For each~\(j\),
  there is a unique point~\(x_{j}\in X_{j}\) that is in the image
  of all transition maps to~\(X_{j}\):
  If there is no such point,
  \(P\) is empty and
  hence \(\emptyset\to\Map(P,\emptyset)\) is not surjective.
  If there are two such points,
  a map \(X_{j}\to{*}\amalg{*'}\)
  that separates those two
  cannot factor through~\(*\) or~\(*'\).
  Therefore,
  we obtain a map
  \begin{equation*}
    \projlim_{j}(X_{j})_{x_{j}}^{\dag}\to\projlim_{j}X_{j}=P.
  \end{equation*}
  We claim that this is an equivalence.
  We can construct an inverse.
  Let \(x_{j}\in Z\subset X_{j}\) be a clopen subset.
  We consider a function
  \(X_{j}\to{*}\amalg{*'}\)
  whose fiber at~\(*\) is exactly~\(Z\).
  From the fact that \(P\) is a point
  and the definition of~\(x_{j}\),
  we see that
  \(P\to X_{j}\to{*}\amalg{*'}\)
  should factor through~\(*\).
  Therefore,
  we have a map \(P\to Z\).
  By varying~\(j\) and~\(Z\),
  we get a map \(P\to\projlim_{j}(X_{j})_{x_{j}}^{\dag}\),
  which is the desired inverse.
\end{proof}

This result applies to more general coefficients:

\begin{corollary}\label{conservative}
Let \(\kappa\) be an infinite strong limit cardinal
  and \(\cat{C}\) a compactly generated \(\infty\)-category.
  A map \(C\to C'\) between condensed objects
  is an equivalence
  if and only if
  the morphism
  \begin{equation*}
    \injlim_{\mu(J)=1}C(\beta J)
    \to
    \injlim_{\mu(J)=1}C'(\beta J)
  \end{equation*}
  is an equivalence
  for any ultrafilter~\(\mu\) on
  any set~\(I\) of cardinality \(<\kappa\).
\end{corollary}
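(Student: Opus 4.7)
The plan is to reduce to the anima case (\cref{xzifdm}) by postcomposing with corepresentable functors $\Map_{\cat{C}}(c,-)$ for compact generators $c$ of $\cat{C}$. The ``only if'' direction is automatic, so I focus on the converse. Enlarging $\kappa$ if necessary, I may assume $C$ and $C'$ are $\kappa$-condensed; by \cref{x9djsw}, passage from condensed to $\kappa$-condensed objects is fully faithful under the hypotheses on $\cat{C}$ (compactly generated $\infty$-categories satisfy the commutation of finite products with filtered colimits), so this reduction is harmless.

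Since $\cat{C}$ is compactly generated, a morphism therein is an equivalence iff $\Map_{\cat{C}}(c,-)$ sends it to an equivalence of animas for every compact $c$. Applying $\Map_{\cat{C}}(c,-)$ sectionwise to a $\kappa$-condensed object produces a $\kappa$-condensed anima, because the hypersheaf condition is a limit condition and $\Map_{\cat{C}}(c,-)$ preserves arbitrary limits. Consequently, $C\to C'$ is an equivalence of $\kappa$-condensed objects iff the induced map $\Map_{\cat{C}}(c,C(-))\to\Map_{\cat{C}}(c,C'(-))$ is an equivalence of $\kappa$-condensed animas for every compact $c$. By \cref{xzifdm}, this last condition is in turn equivalent to the ultrafilter evaluations
\begin{equation*}
  \injlim_{\mu(J)=1}\Map_{\cat{C}}(c,C(\beta J))
  \to
  \injlim_{\mu(J)=1}\Map_{\cat{C}}(c,C'(\beta J))
\end{equation*}
being equivalences for every ultrafilter $\mu$ on every set of cardinality $<\kappa$.

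The final step is to commute $\Map_{\cat{C}}(c,-)$ past the filtered colimit: the indexing category of measure-one subsets of $I$ (ordered by reverse inclusion) is filtered, and $c$ is compact, so $\Map_{\cat{C}}(c,-)$ preserves this colimit. The displayed morphism therefore identifies with the image under $\Map_{\cat{C}}(c,-)$ of the ultrafilter evaluation of $C\to C'$ formed in $\cat{C}$, which is an equivalence by hypothesis. The only genuinely delicate point is this interchange, which is precisely where compact generation of $\cat{C}$ is essential; everything else is routine bookkeeping on top of \cref{xzifdm}.
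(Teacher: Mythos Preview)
Your proof is correct and follows essentially the same route as the paper's: reduce to the anima case by postcomposing with \(\Map_{\cat{C}}(c,-)\) for compact \(c\), invoke \cref{xzifdm}, and use compactness of \(c\) to commute \(\Map_{\cat{C}}(c,-)\) past the filtered colimit over \(\mu\)-large subsets. The paper's argument is terser and leaves this last commutation implicit, but the strategy is identical; your step of ``enlarging \(\kappa\)'' is superfluous (and would not actually be licit, since the hypothesis is only given for sets of cardinality \(<\kappa\))---the statement, despite its wording, should be read as concerning \(\kappa\)-condensed objects in parallel with \cref{xzifdm}.
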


\begin{proof}
  Again, the ``only if'' direction is clear.
  We prove the ``if'' direction.
  Suppose that the condition is satisfied.
  Since \(\cat{C}\) is compactly generated,
  what we have to prove is that
  the map
  \(\Map(T,C(\X))\to\Map(T,C'(\X))\)
  of condensed animas
  for any compact object~\(T\) of~\(\cat{C}\),
  is an equivalence.
  Now we can conclude by applying \cref{xzifdm}.
\end{proof}

\begin{remark}\label{x16wx2}
  Since conservative functors
  are closed under retracts,
  \cref{conservative} remains true
  when \(\cat{C}\) is compactly assembled\footnote{This terminology is due to Lurie;
    see~\cite[Section~21.1.2]{LurieSAG}.
    Note that
    in the stable situation,
    this is equivalent to dualizability;
    see~\cite[Section~D.7]{LurieSAG}
  },
  i.e., a
  retract
  of a compactly generated \(\infty\)-category
  in \(\Cat{Pr}\).
\end{remark}

\subsection{Discrete animas}\label{ss:dis}

As we have seen in \cref{xba8hv},
in a nice situation,
it is harmless to consider an object as a condensed object:

\begin{definition}\label{x6p5hg}
  Assume that
  finite products commute with filtered colimits
  in a presentable \(\infty\)-category~\(\cat{C}\).
  We say that a condensed object in~\(\cat{C}\)
  \emph{discrete} if it is \(\aleph_{0}\)-condensed.
\end{definition}

The following consequence
of our argument in \cref{ss:kr} is useful:

\begin{proposition}\label{dis}
Let \(\cat{C}\) be a compactly assembled \(\infty\)-category.
  A condensed object~\(C\) is discrete if and only if
  the map
  \begin{equation*}
    \injlim_{\mu(J)=1}
    C(\beta J)
    \to C(\{\mu\})
    \simeq C(*)
  \end{equation*}
  is an equivalence
  for any ultrafilter~\(\mu\) on any set~\(I\).
\end{proposition}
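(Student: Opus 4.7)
The plan is to compare \(C\) against the canonical discrete object \(\iota(C(\ast))\), where \(\iota\colon\cat{C}\hookrightarrow\Cat{ConAni}\otimes\cat{C}\) denotes the fully faithful inclusion from \cref{xba8hv}, realized (via \cref{x9djsw}) as left Kan extension from \(\Cat{EDis}_{\aleph_{0}}=\Cat{FinSet}\) along the inclusion. This embedding has evaluation at \(\ast\) as a right adjoint, and by definition \(C\) is discrete if and only if the counit \(\iota(C(\ast))\to C\) is an equivalence.

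For the ``only if'' direction, I would first establish the stalk identity for every \(X\in\cat{C}\) and every ultrafilter \(\mu\) on a set \(I\): namely, \(\injlim_{\mu(J)=1}\iota(X)(\beta J)\simeq X\). By the Kan extension formula, this is a colimit indexed by the category \(\cat{E}\) of triples \((J,F,\beta J\to F)\) with \(\mu(J)=1\) and \(F\) finite, with value \(X^{F}\). Since every finite partition \(J=J_{1}\sqcup\cdots\sqcup J_{n}\) has a unique piece \(J_{i}\) with \(\mu(J_{i})=1\), the full subcategory \(\cat{E}_{0}\subset\cat{E}\) spanned by trivial partitions \((J,\ast,\beta J\to\ast)\) is cofinal: the relevant comma categories simplify to the filtered directed sets \(\{J'\subset J_{i}:\mu(J')=1\}\). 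The restricted diagram is constant at \(X\), so the colimit is \(X\); unwinding the formula shows the canonical comparison to \(\iota(X)(\ast)=X\) is the identity.

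For the ``if'' direction, assume the stalk condition holds for \(C\). For each ultrafilter \(\mu\) on \(I\) we have a commutative diagram
\begin{equation*}
\begin{tikzcd}
\injlim_{\mu(J)=1}\iota(C(\ast))(\beta J)\ar[r,"\sim"]\ar[d] & \iota(C(\ast))(\ast)\ar[d,"\id"] \\
\injlim_{\mu(J)=1}C(\beta J)\ar[r,"\sim"] & C(\ast)
\end{tikzcd}
\end{equation*}
in \(\cat{C}\): the top arrow is the equivalence from the previous step (applied to \(X=C(\ast)\)), the bottom is the hypothesis, and the right side is the identity since evaluation at \(\ast\) is a retraction of \(\iota\). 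Hence the left vertical arrow --- the stalk of the counit \(\iota(C(\ast))\to C\) at \(\mu\) --- is an equivalence for every \(\mu\). Applying \cref{conservative}, which is valid for compactly assembled \(\cat{C}\) by \cref{x16wx2}, promotes this stalkwise equivalence to an equivalence of condensed objects, so \(C\) is discrete.

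The main obstacle is the cofinality computation in the first step, where the combinatorial content of the ultrafilter axiom --- that exactly one piece of each finite partition has measure \(1\) --- is used to collapse the Kan extension colimit to the constant value. Everything else is formal adjunction chasing together with the conservativity of ultrafilter stalks already secured in \cref{conservative}.
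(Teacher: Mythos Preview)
Your proof is correct and follows essentially the same strategy as the paper: compare \(C\) against the discrete object \(\iota(C(\ast))\) via the counit, verify the stalk condition for discrete objects (the ``only if'' direction), and then invoke the conservativity of ultrafilter stalks from \cref{conservative,x16wx2} to upgrade the stalkwise equivalence to a global one. The only difference is the level of detail: where the paper dispatches the ``only if'' direction in one line by citing that finite products commute with filtered colimits in \(\cat{C}\), you unfold the Kan extension formula and give the explicit cofinality argument (using that an ultrafilter selects a unique block from any finite partition). This is exactly the content hidden behind the paper's hint, so your expansion is welcome rather than a departure.
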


The ``only if'' direction
is strengthened in \cref{xd9foy}.

\begin{proof}
  The ``only if'' direction
  follows from the fact
  that finite products commute with filtered colimits
  in~\(\cat{C}\).
  We prove the ``if'' direction.

  Suppose that the condensed object~\(C\)
  satisfies the condition.
  We write \(C'\) for the discrete condensed object
  corresponding to the object~\(C(*)\in\cat{C}\).
  We have a canonical map \(C'\to C\).
  From the condition and the ``only if''
  direction,
  \begin{equation*}
    \injlim_{\mu(J)=1}
    C'(\beta J)
    \to
    \injlim_{\mu(J)=1}
    C(\beta J)
  \end{equation*}
  is an equivalence for any ultrafilter~\(\mu\)
  on any set~\(I\).
  Therefore, by \cref{x16wx2},
  \(C'\to C\) is an equivalence
  and hence \(C\) is discrete.
\end{proof}

\begin{definition}\label{xjcm9x}
  Let \(C\) be a condensed object
  in a presentable \(\infty\)-category
  in which finite products commute with filtered colimits.
  We say that
  a morphism \(C\to C'\) (or simply~\(C'\)) is its \emph{discretization}
  if \(C'\) is discrete
  and it is initial among such morphisms.
  By definition,
  discretization is functorial
  when we consider the full subcategory
  of discretizable condensed objects.
  We write \((\X)^{\dis}\) for this (partial) functor.
\end{definition}

\begin{example}\label{dis_colim}
In the situation of \cref{xjcm9x},
  let \(C=\injlim_{i}C_{i}\) be
  a colimit of discretizable objects.
  Then \(C\) is discretizable
  and the map
  \(\injlim_{i}(C_{i})^{\dis}\to C^{\dis}\)
  is an equivalence.
\end{example}

\begin{example}\label{xh0gyx}
  Consider a profinite set \(X=\projlim_{i}X_{i}\)
  as a condensed anima.
  Its discretization does not exist
  unless \(X\) is finite:
  Suppose that the discretization exists.
  By the universality
  and the fact that \(X_{i}\) is discrete for each~\(i\),
  the identity map
  \(X\to X=\projlim_{i}X_{i}\)
  factors through \(X\to X^{\dis}\).
  Then \(X\) is a retract of~\(X^{\dis}\)
  and therefore \(X\) itself must be discrete.
  See also \cref{xs7llv} about this example.
\end{example}

Discretization is meaningful in a more geometric situation;
we see more computations in \cref{ss:core}.

Lastly,
we consider the value of a discrete condensed object
at profinite sets:

\begin{proposition}\label{dis_pf}
  Let \(\cat{C}\) be a compactly assembled \(\infty\)-category.
  Let \(F\colon\Cat{TDis}^{\op}\to\cat{C}\)
  be a condensed object (via \cref{xj8jo5})
  that is discrete.
  Then \(\injlim_{i}F(S_{i})\to F(S)\)
  is an equivalence
  for profinite sets \(S=\projlim_{i}S_{i}\).
\end{proposition}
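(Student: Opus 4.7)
The plan is to reduce the claim to $\cat{C}=\Cat{Ani}$ and then to invoke the left Kan extension formula of \cref{x9djsw} for discrete condensed animas.

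Since $\cat{C}$ is compactly assembled, it embeds as a retract in some compactly generated $\infty$-category~$\cat{D}$, and equivalences in~$\cat{D}$ (hence also in~$\cat{C}$) are detected by the conservative family $\{\Map_{\cat{D}}(K,-)\}$ as $K$ ranges over compact objects of~$\cat{D}$. Each such functor preserves filtered colimits, and since mapping out of a compact object commutes with finite products, it sends the discrete condensed object~$F$ to the discrete condensed anima corresponding to $\Map_{\cat{D}}(K,F(*))$. Applied to the map $\phi\colon\injlim_{i}F(S_{i})\to F(S)$, this reduces us to the case $\cat{C}=\Cat{Ani}$ with $F=\cst{X}$ for some $X\in\Cat{Ani}$.

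Next, I settle the subcase where $S$ is itself extremally disconnected. By \cref{x9djsw} applied with $\kappa=\aleph_{0}$ (which is strong limit, and finite products commute with filtered colimits in~$\Cat{Ani}$), the value of $\cst{X}$ at an extremally disconnected~$\tilde{S}$ is the left Kan extension from finite (discrete) sets of $T\mapsto X^{T}$: namely, it is the filtered colimit of $X^{T}$ indexed by the opposite of the category of finite discrete sets~$T$ equipped with a continuous map $\tilde{S}\to T$. If moreover $\tilde{S}=\projlim_{j}\tilde{S}_{j}$ is presented profinitely with each $\tilde{S}_{j}$ finite, the tautological functor $j\mapsto(\tilde{S}_{j},\tilde{S}\to\tilde{S}_{j})$ into that index category is initial: any continuous $\tilde{S}\to T$ induces a finite clopen partition of~$\tilde{S}$, which by compactness must descend from some~$\tilde{S}_{j}$, yielding the factorisation $\tilde{S}\to\tilde{S}_{j}\to T$; cofilteredness of the index then gives weak contractibility of the corresponding comma category. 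Hence $\cst{X}(\tilde{S})\simeq\injlim_{j}X^{\tilde{S}_{j}}$, which is the desired formula in this subcase.

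For a general profinite~$S$, hypersheaf descent handles the extension. Iteratively applying Gleason's theorem to matching objects produces an extremally disconnected hypercover $\tilde{S}^{\bullet}\to S$ in which every~$\tilde{S}^{n}$ is itself extremally disconnected profinite. The hypersheaf property of~$F$ gives $F(S)\simeq\mathrm{Tot}(F(\tilde{S}^{\bullet}))$, and the preceding step computes each $F(\tilde{S}^{n})$ as a filtered colimit over its finite quotients. To identify this with $\injlim_{i}X^{S_{i}}$, the slickest route is to recognise the rule $S\mapsto\injlim_{i}X^{S_{i}}$ as the pro-continuous extension of $T\mapsto X^{T}$ from finite to profinite sets, observe that this extension is itself a hypersheaf on~$\Cat{TDis}$, and note that by the previous paragraph it agrees with $F|_{\Cat{TDis}^{\op}}$ on the extremally disconnected basis. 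This last identification---verifying that the pro-continuous extension is indeed a hypersheaf, equivalently justifying the commutation of $\mathrm{Tot}$ over the hypercover with the filtered colimits over finite quotients---is the main technical hurdle of the argument.
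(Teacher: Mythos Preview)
Your overall strategy matches the paper's: reduce to \(\cat{C}=\Cat{Ani}\), and then show that the functor \(G(S)=\injlim_{i}X^{S_{i}}\) is a hypersheaf on \(\Cat{TDis}\), which forces it to agree with the discrete condensed anima~\(\cst{X}\). But you stop exactly where the content lies. Your final sentence openly flags the hypersheaf verification as ``the main technical hurdle'' and does not carry it out; as written, the proof is incomplete. The commutation of a cosimplicial totalization with a filtered colimit is genuinely not automatic in \(\Cat{Ani}\), so this is not a formality you can wave through.

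The paper closes this gap as follows. After the same reduction to \(\Cat{Ani}\), write an anima \(X\) as the limit of its Postnikov tower, so that it suffices to verify hyperdescent for \(G\) with values in \(\Cat{Ani}_{\leq n}\). In a truncated \(\infty\)-topos hyperdescent coincides with \v{C}ech descent, so one only needs descent for a surjection \(S'\twoheadrightarrow S\) of profinite sets. Writing this surjection as a cofiltered limit of surjections of finite sets \(S'_{i}\twoheadrightarrow S_{i}\), the \v{C}ech nerve is likewise a cofiltered limit of finite \v{C}ech nerves; since in \(\Cat{Ani}_{\leq n}\) the relevant totalization is a finite limit, it commutes with the filtered colimit defining~\(G\), reducing to descent for surjections of finite sets, which split. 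That Postnikov step is the missing idea in your argument. (A minor point: in your reduction step you implicitly use that the inclusion \(\cat{C}\hookrightarrow\cat{D}\) preserves finite products; the paper is more careful here, choosing a localization \(\cat{C}'\to\cat{C}\) from a compactly generated \(\cat{C}'\) that preserves both limits and colimits, so that postcomposition sends hypersheaves to hypersheaves.)
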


\begin{proof}
  By \cref{xuf9uh},
  it suffices to prove that
  \(\cat{C}\simeq\Shv^{\hyp}(\Cat{TDis}_{\aleph_{1}};\cat{C})
  \to\Shv^{\hyp}(\Cat{TDis}_{\aleph_{1}};\cat{C})\)
  can be computed by left Kan extension.
  In other words,
  we have to prove that
  for any object~\(C\),
  the functor
  \(G\colon\Cat{TDis}_{\aleph_{1}}^{\op}\to\cat{C}\)
  defined by the formula
  \(G(S)=\injlim_{i}C^{\times S_{i}}\) is a hypersheaf.

  We choose a compactly generated \(\infty\)-category~\(\cat{C}'\)
  and a localization \(\cat{C}'\to\cat{C}\)
  that preserves both limits and colimits.
  In this case, the functor
  \begin{equation*}
    \Shv^{\hyp}(\Cat{TDis}_{\aleph_{1}};\cat{C}')
    \simeq
    \Cat{ConAni}_{\aleph_{1}}\otimes\cat{C}'
    \to
    \Cat{ConAni}_{\aleph_{1}}\otimes\cat{C}
    \simeq
    \Shv^{\hyp}(\Cat{TDis}_{\aleph_{1}};\cat{C})
  \end{equation*}
  is given by composition
  so we are reduced to the compactly generated case.
  Then by Yoneda (cf.~the proof of \cref{conservative}),
  we are reduced to the case of \(\cat{C}=\Cat{Ani}\).
  In that case,
  by writing an anima as the limit
  of its Postnikov tower,
  we can reduce to the case of \(\cat{C}=\Cat{Ani}_{\leq n}\).
  In that case,
  hyperdescent is equivalent to Čech descent.
  Then we can reduce the descent for
  surjections between profinite sets
  to the descent for surjections between finite sets,
  which split.
\end{proof}

\begin{corollary}\label{xd9foy}
  Let \(\cat{C}\) be a compactly assembled \(\infty\)-category.
  For a discrete condensed object~\(C\)
  and a point~\(x\) on a profinite set~\(X\),
  the map
  \begin{equation*}
    \injlim_{x\in Z\subset X}C(Z)
    \to
    C(x)
    \simeq
    C(*)
  \end{equation*}
  is an equivalence,
  where \(Z\) runs over the clopen neighborhoods of~\(x\).
\end{corollary}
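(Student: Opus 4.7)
The plan is to realize the one-point space $\{x\}$ as a cofiltered limit of its clopen neighborhoods inside $\Cat{TDis}$, and then invoke \cref{dis_pf} to transfer the colimit to $C$.

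First, I would observe that, since $X$ is a profinite set (a Stone space), the clopen subsets of $X$ containing $x$ form a cofiltered neighborhood basis at $x$. Indeed, by Hausdorffness and total disconnectedness, any $y\neq x$ can be separated from $x$ by a clopen set, so the intersection of all clopen neighborhoods of $x$ equals $\{x\}$. In the category of profinite sets this yields an identification
\[
  \{x\} \simeq \projlim_{x\in Z\subset X} Z,
\]
where $Z$ ranges over the (cofiltered) poset of clopen neighborhoods of $x$, ordered by reverse inclusion, with the structure maps the inclusions. Each such $Z$, being a clopen subset of the profinite set $X$, is itself a profinite set, so the diagram lives in $\Cat{TDis}$.

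Next, I would apply \cref{dis_pf} to the discrete condensed object $C$ and the presentation above. That proposition gives the equivalence
\[
  \injlim_{x\in Z\subset X} C(Z) \xrightarrow{\simeq} C(\{x\}),
\]
and since $\{x\}$ is isomorphic in $\Cat{TDis}$ to the terminal object, we have $C(\{x\})\simeq C(*)$, as required.

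In short, the corollary is a direct unpacking of \cref{dis_pf} applied to the canonical presentation of $\{x\}$ as the cofiltered intersection of its clopen neighborhoods; the only topological input is the standard fact that clopen sets form a neighborhood basis in a Stone space. I do not foresee any substantive obstruction beyond what has already been done in \cref{dis_pf}.
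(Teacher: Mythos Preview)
Your proof is correct and is exactly the argument the paper has in mind: the corollary is stated without proof immediately after \cref{dis_pf}, and your deduction---writing \(\{x\}=\projlim_{x\in Z\subset X}Z\) over the cofiltered system of clopen neighborhoods in \(\Cat{TDis}\) and invoking \cref{dis_pf}---is the intended one. The only minor remark is that the proof of \cref{dis_pf} is phrased for a presentation of \(S\) by \emph{finite} quotients, so strictly speaking one should note that any map from \(\{x\}=\projlim_Z Z\) to a finite set factors through some \(Z\), giving the cofinality needed to pass from finite to profinite stages; this is routine and the paper evidently regards it as implicit.
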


\subsection{Recollection: solid spectra}\label{ss:sol}

In this section,
we review some basic facts about solid spectra.

\begin{theorem}[Clausen--Scholze]\label{xdjyu0}
  Let \(\Cat{SolSp}_{\geq0}\subset\Cat{ConSp}_{\geq0}\)
  be the full subcategory
  generated by products of~\(\SS\)
  under colimits and extensions.
  \begin{enumerate}
    \item
      An object \(E\in\Cat{ConSp}_{\geq0}\)
      is inside \(\Cat{SolSp}_{\geq0}\)
      if and only if
      \(\pi_{n}E\) is inside \(\Cat{SolSp}_{\geq0}\)
      for \(n\geq0\).
    \item
      For any profinite set~\(S\)
      and \(N\in\Cat{SolSp}_{\geq0}\),
      the internal mapping condensed spectrum
      \([\SS[S],N]\) is in \(\Cat{SolSp}_{\geq0}\).
    \item
      The inclusion
      \(\Cat{SolSp}_{\geq0}\subset\Cat{ConSp}_{\geq0}\)
      has a left adjoint.
    \item
      For a profinite set~\(S\),
      the image of \(\SS[S]\) under the left adjoint
      is a product of~\(\SS\).
  \end{enumerate}
\end{theorem}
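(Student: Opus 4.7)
The plan is to treat (iv) as the foundational computation and derive (iii), (ii), and (i) from it, along the lines of the Clausen--Scholze strategy. For (iv), present a profinite set $S$ as a cofiltered limit $S = \projlim_i S_i$ of finite sets. The natural candidate for $\SS[S]^{\sol}$ is $\projlim_i \SS[S_i]$: each $\SS[S_i]$ is a finite coproduct of $\SS$, equivalently a finite product, so the limit is visibly a product of copies of $\SS$ in condensed spectra and hence lies in $\Cat{SolSp}_{\geq 0}$. The substantive task is to check the universal property: for every $N \in \Cat{SolSp}_{\geq 0}$, the natural map $[\projlim_i \SS[S_i], N] \to [\SS[S], N]$ should be an equivalence. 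Using the generating description of $\Cat{SolSp}_{\geq 0}$, one reduces to the case $N = \prod_J \SS$ and thence to an explicit analysis of mapping spectra.

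Given (iv), part (iii) follows essentially formally: the left adjoint is determined on the generating objects $\SS[S]$ (for $S$ extremally disconnected) by (iv), and extends to all of $\Cat{ConSp}_{\geq 0}$ by the adjoint functor theorem, since $\Cat{SolSp}_{\geq 0}$ is presentable and the inclusion preserves small limits. For (ii), I would exploit that $[\SS[S], \X]$ preserves limits and is exact, so it suffices to verify that the generators of $\Cat{SolSp}_{\geq 0}$ are sent to solid spectra; for $N = \prod_J \SS$ this reduces to computing $[\SS[S], \SS]$, which is accessible via (iv) and the natural duality between the free and the cofree constructions.

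Finally, for (i), the forward direction uses closure of the solid class under homotopy truncation: one checks closure on the generators (the products of $\SS$ have solid homotopy groups, namely products of $\ZZ$ in degree $0$ and $0$ elsewhere) and compatibility with the colimits and extensions defining $\Cat{SolSp}_{\geq 0}$. The reverse direction uses the convergent Postnikov tower, expressing any connective $E$ as a limit of extensions of shifts of its homotopy groups, combined with the closure of $\Cat{SolSp}_{\geq 0}$ under limits and extensions. The main obstacle is clearly (iv): the candidate $\projlim_i \SS[S_i]$ is easy to identify and manifestly solid, but verifying its universal property requires genuine input, namely controlling maps from $\SS[S]$ into arbitrary products of $\SS$, which is the heart of the Clausen--Scholze computation and ultimately depends on a careful analysis of how solid spectra detect the profinite structure of $S$.
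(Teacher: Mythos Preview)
The paper's route is quite different from yours. Rather than proving (i)--(iv) from scratch over~\(\SS\), it imports two black boxes: the already-established solid theory over~\(\ZZ\) (Sections~5--6 of the Condensed notes) and the nilinvariance theorem for analytic ring structures (Proposition~12.23 of the Analytic notes). Nilinvariance transports the entire analytic ring structure along \(\SS\to\ZZ\), so that (i)--(iii) come for free as part of the general theory of analytic rings; the only thing left is to identify the resulting category with the \(\Cat{SolSp}_{\geq0}\) of the statement, which reduces to~(iv). For~(iv), the paper lifts the known \(\ZZ\)-equivalence \(\ZZ[S]^{\sol}\simeq\ZZ^{I}\) to a map \(\SS[S]^{\sol}\to\SS^{I}\) and checks it is an equivalence after \(\X\otimes_{\SS}\ZZ\), using the auxiliary lemma that \(\SS^{I}\otimes_{\SS}\ZZ\to\ZZ^{I}\) is an equivalence (proved by a pseudocoherence argument). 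Your direct approach would effectively reprove the hard part of the \(\ZZ\)-theory at the level of~\(\SS\); you correctly flag this as the main obstacle, but the paper sidesteps it entirely by reducing to~\(\ZZ\).

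Two smaller points. First, your claim that \(\prod_{J}\SS\) has homotopy ``products of~\(\ZZ\) in degree~\(0\) and~\(0\) elsewhere'' is false: \(\pi_{n}\bigl(\prod_{J}\SS\bigr)\simeq\prod_{J}\pi_{n}\SS\), which for \(n>0\) is a nontrivial product of finite abelian groups. These are still solid, so the argument can be repaired, but the statement as written is wrong. Second, in the reverse direction of~(i) you invoke closure of \(\Cat{SolSp}_{\geq0}\) under limits, but the definition only gives closure under colimits and extensions; closure under limits is a consequence of~(iii), so you should not use it while deriving~(i) independently of the other parts.
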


To prove this, we need the following:

\begin{lemma}\label{xui778}
  For a set~\(I\),
  the canonical map
  \(\SS^{I}\otimes_{\SS}\ZZ\to\ZZ^{I}\)
  of condensed spectra is an equivalence.
\end{lemma}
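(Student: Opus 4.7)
My plan is to prove the lemma via the conservativity of ultrafilter stalks established in \cref{conservative}. For an ultrafilter~$\mu$ on a sufficiently small set, the associated stalk functor
\begin{equation*}
  (-)_\mu\colon\Cat{ConSp}\to\Cat{Sp},\qquad E\longmapsto\injlim_{\mu(J)=1}E(\beta J),
\end{equation*}
arises from a point of the condensed $\infty$-topos; hence it is left-exact and colimit-preserving, and its canonical extension to stable and module-theoretic contexts is symmetric monoidal. In particular it commutes with the relative tensor product over~$\SS$. Combined with the observation that $\SS_\mu\simeq\SS$ and $\ZZ_\mu\simeq\ZZ$ (immediate from \cref{dis}, since $\SS$ and $\ZZ$ are discrete), the lemma reduces to showing that
\begin{equation*}
  (\SS^I)_\mu\otimes_\SS\ZZ\longrightarrow(\ZZ^I)_\mu
\end{equation*}
is an equivalence of spectra for every such~$\mu$.

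To compute both stalks explicitly, I would use that products in condensed spectra are computed pointwise on $\Cat{EDis}$, so $\SS^I(\beta J)\simeq\SS(\beta J)^I$, and by \cref{dis_pf},
\begin{equation*}
  \SS(\beta J)\simeq\injlim_F\SS^F,
\end{equation*}
where $F$ runs over finite quotients of~$J$, and similarly for~$\ZZ$. The goal is to rearrange the resulting double colimits so that one can pull $\otimes_\SS\ZZ$ past them and invoke the trivial finite-index identity $\SS^T\otimes_\SS\ZZ\simeq\ZZ^T$, which holds because $\SS^T\simeq\bigoplus_T\SS$ for finite~$T$.

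The hard part will be precisely this rearrangement: $(-)^I$ does not commute with filtered colimits when~$I$ is infinite, so the naive swap fails. To force it through, I would pursue one of two refinements. First, a cofinality argument exploiting the fact that~$\mu$ pushes forward to a principal ultrafilter on every finite quotient~$F$, which might allow one to replace the diagram indexed by pairs $(J,F)$ with $\mu(J)=1$ by a cofinal subsystem in which the interchange with $(-)^I$ becomes valid. Second, a Postnikov-tower reduction using the cofiber sequence $\tau_{\geq1}\SS\to\SS\to\ZZ$, where the finiteness of $\pi_n\SS$ for $n>0$ could give finer control via multiplication-by-$n$ exactness and localize the problem to Eilenberg--MacLane coefficients on finite abelian groups. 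Either route requires genuine combinatorial-homotopical work, and this interchange is the main technical obstacle.
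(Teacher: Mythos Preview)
Your reduction via \cref{conservative} is technically sound, but it is a detour that lands you back at a problem no easier than the original one. After taking the stalk at~$\mu$, you must compare
\[
  \Bigl(\injlim_{\mu(J)=1}\bigl(\injlim_F\SS^F\bigr)^I\Bigr)\otimes_\SS\ZZ
  \longrightarrow
  \injlim_{\mu(J)=1}\bigl(\injlim_F\ZZ^F\bigr)^I,
\]
and as you yourself note, the interchange of $(-)^I$ with the inner filtered colimit is illegitimate for infinite~$I$. Your first proposed fix (a cofinality argument on pairs $(J,F)$) does not resolve this: the obstruction is that $(-)^I$ genuinely fails to commute with filtered colimits, and no reindexing of the same diagram changes that. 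Your second proposed fix (Postnikov tower and finiteness of $\pi_n\SS$) is pointing at exactly the right ingredient, but applying it \emph{after} the stalk reduction only makes the bookkeeping worse.

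The paper avoids all of this by never passing to stalks. It proves directly that $\SS^I\otimes_\SS E\to E^I$ is an equivalence of condensed spectra for every \emph{pseudocoherent} spectrum~$E$, and then observes that $\ZZ$ is pseudocoherent (by Serre's finite generation of stable stems). The argument is a two-line connectivity estimate: for perfect~$E$ the map is an equivalence by dualizability, and for pseudocoherent~$E$ one writes $E=\injlim_nE_n$ with each $E_n$ perfect and $\cofib(E_n\to E)$ $(n+1)$-connective; then both vertical cofibers in the comparison square are $(n+1)$-connective, so the cofiber of $\SS^I\otimes_\SS E\to E^I$ is $(n+1)$-connective for every~$n$, hence zero. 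Your ``Postnikov'' intuition is precisely this pseudocoherence argument---just run it globally rather than stalkwise.
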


\begin{proof}
Let \(E\) be a (discrete) spectrum
  and we consider a general question
  whether \(\SS^{I}\otimes_{\SS}E\to E^{I}\)
  is an equivalence.
  We prove that this is the case when \(E\) is pseudocoherent.

  First, when \(E\) is perfect, the morphism is an equivalence.
  When \(E\) is pseudocoherent, we write \(E\) as a sequential colimit of perfect spectra
  \(\injlim_{n\geq0}E_{n}\)
  such that \(\cofib(E_{n}\to E)\) is \((n+1)\)-connective.
  We consider the diagram
  \begin{equation*}
    \begin{tikzcd}
      \SS^{I}\otimes E_{n}\ar[r,"\simeq"]\ar[d]&
      E_{n}^{I}\ar[d]\\
      \SS^{I}\otimes E\ar[r]&
      E^{I}\rlap.
    \end{tikzcd}
  \end{equation*}
  Then the cofibers of the vertical arrows are \((n+1)\)-connective
  and hence so is its total cofiber.
  Therefore, the cofiber of the bottom arrow
  is \((n+1)\)-connective.
  By varying~\(n\),
  we see that the bottom arrow is an equivalence.
\end{proof}

\begin{proof}[Proof of \cref{xdjyu0}]
  By applying the nilinvariance theorem
  proven in~\cite[Proposition~12.23]{Analytic}
  to the solid structure on~\(\ZZ\),
  which was introduced in~\cite[Sections~5--6]{Condensed},
  we get the solid structure on~\(\SS\).
  We wish to check that
  the \(\infty\)-category of connective solid spectra
  coincides with \(\Cat{SolSp}_{\geq0}\)
  in the statement.
  To prove that,
  it suffices to check that
  \(\SS[S]^{\sol}\) is a product of~\(\SS\)
  for a profinite set~\(S\).

  We pick a \(\ZZ\)-linear map
  \(\ZZ[S]\to\ZZ^{I}\)
  that induces an isomorphism after (\(\ZZ\)-)solidification.
  We can lift this to 
  \(\SS[S]\to\SS^{I}\).
  Since \(\SS^{I}\) is solid,
  we obtain the map \(\SS[S]^{\sol}\to\SS^{I}\).
  This is an equivalence after \(\X\otimes_{\SS}\ZZ\)
  by the definition of the solid structure on~\(\SS\)
  and \cref{xui778}.
  Therefore,
  we have \(\SS[S]^{\sol}\simeq\SS^{I}\).
\end{proof}

\begin{definition}\label{x8osa7}
  We use the notation in \cref{xdjyu0},
  we call an object of \(\Cat{SolSp}\),
  the stabilization of \(\Cat{SolSp}_{\geq0}\),
  a \emph{solid spectrum}.
  We write \((\X)^{\sol}\)
  for the left adjoint to the inclusion \(\Cat{SolSp}\hookrightarrow\Cat{ConSp}\)
  and call it \emph{solidification}.
\end{definition}

We note the following:

\begin{example}\label{real_sol}
  The proof of the existence of solid theory
  shows that
  the real numbers~\(\RR\) (with the usual topology)
  solidifies to zero;
  see~\cite[Corollary~6.1]{Condensed}.
  Therefore,
  any condensed \(\RR\)-module (spectrum) solidifies to zero.
\end{example}

We see more computations of solidification
in \cref{ss:core}.

\section{Truncatedness of solidified \texorpdfstring{\(K\)}{K}-theory}\label{s:tr}

In this section,
we see the following truncatedness property:

\begin{theorem}\label{tr}
  For a connective condensed \(\E_1\)-algebra~\(A\)
  whose underlying condensed spectrum
  has a structure of an \(\RR\)-module,
  the map \(K(A)\to K(\pi_0A)\)
  solidifies to an equivalence.
\end{theorem}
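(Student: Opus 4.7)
The plan is to show that the fiber $F = \fib\bigl(K(A) \to K(\pi_0 A)\bigr)$, viewed as a condensed spectrum, is an $\RR$-module; \cref{real_sol} then forces $F^{\sol} \simeq 0$, which is equivalent to the desired assertion.

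To identify $F$, I would invoke the Dundas--Goodwillie--McCarthy theorem applied sectionwise on $\Cat{EDis}_{\kappa}$. The truncation map $A \to \pi_0 A$ is a connective cover, and in particular an isomorphism on $\pi_0$, so for each section $A(S)$ the cyclotomic trace yields a natural equivalence of spectra
\[
  \fib\bigl(K(A(S)) \to K(\pi_0 A(S))\bigr) \simeq \fib\bigl(\operatorname{TC}(A(S)) \to \operatorname{TC}(\pi_0 A(S))\bigr).
\]
Since $K$, $\operatorname{THH}$, and $\operatorname{TC}$ of a condensed $\E_{1}$-algebra are computed sectionwise on $\Cat{EDis}$, and $\pi_0$ is likewise sectionwise there (finite products being preserved), this promotes to an equivalence $F \simeq \fib\bigl(\operatorname{TC}(A) \to \operatorname{TC}(\pi_0 A)\bigr)$ of condensed spectra.

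Now for the $\RR$-linearity: $\operatorname{THH}(A) = A \otimes_{A \otimes A^{\op}} A$ (with $\otimes$ over $\SS$) inherits an $\RR$-module structure from the left $\RR \hookrightarrow A$ action, and similarly for $\pi_0 A$, which is an $\RR$-algebra; since $\operatorname{TC}$ is built from $\operatorname{THH}$ by limits and colimits, both $\operatorname{TC}(A)$ and $\operatorname{TC}(\pi_0 A)$ are condensed $\RR$-module spectra, and hence so is $F$. By \cref{real_sol}, $F^{\sol} \simeq 0$. The main obstacle is the careful condensed enhancement of the above classical facts, but because $K$, $\operatorname{THH}$, and $\operatorname{TC}$ are functorial in the $\E_{1}$-ring and the DGM trace equivalence is natural, the compatibility with the condensed structure is automatic. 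Alternatively, since $A$ is $\QQ$-linear, one could bypass DGM by appealing to Goodwillie's original rational theorem, identifying $F$ with a shifted fiber of negative cyclic homology, which is manifestly $\RR$-linear.
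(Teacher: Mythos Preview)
Your approach has a subtle but genuine gap. The hypothesis is only that the \emph{underlying condensed spectrum} of~$A$ carries an $\RR$-module structure, not that $A$ is an $\RR$-algebra; your phrase ``the left $\RR \hookrightarrow A$ action'' presumes a ring map $\RR \to A$, which is not provided. Since the $\RR$-action on~$A$ need not intertwine with the multiplication, the cyclic face map~$d_n$ in the bar construction (which multiplies the last and first tensor factors) is not $\RR$-linear in general, so $\operatorname{THH}(A)$ does not obviously inherit an $\RR$-module structure---and a fortiori neither do $\operatorname{TC}(A)$ nor $\HN(A)$. The same objection applies to your alternative: relative negative cyclic homology is not ``manifestly $\RR$-linear'' under the stated hypothesis.

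The paper's argument (close to your alternative, but with $\HC$ rather than $\HN$) avoids this by using Goodwillie's theorem to identify the relative $K$-theory with the shifted relative \emph{cyclic} homology $\HC(\X/\QQ)$, i.e., homotopy $S^1$-\emph{orbits} rather than fixed points. Now $\HC(A/\QQ)$ is a colimit whose terms $A^{\otimes_{\QQ} n}$ are each individually $\RR$-modules (via any one factor) and hence solidify to zero by \cref{real_sol}. The face maps need not be $\RR$-linear, but since solidification is a left adjoint its kernel is closed under arbitrary colimits, so $\HC(A/\QQ)^{\sol}=0$ regardless; this is exactly the content of \cref{89d1a5228f}. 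Your route via $\operatorname{TC}$ or $\HN$ passes through limits, where no such closure is available, and would genuinely require the stronger $\RR$-algebra hypothesis to push an $\RR$-module structure through.
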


We prove this in \cref{ss:tr}
and see consequences in \cref{ss:lt}.

\begin{remark}\label{x5cbuy}
  Konovalov~\cite{Konovalov22}
  proved the truncatedness result
  of semitopological \(K\)-theory.
\end{remark}

\begin{remark}\label{x20z5b}
The same argument shows
  that for a connective condensed \(\E_{1}\)-rings
  underlying \(\QQ_{p}\)-modules,
  the \(\SS[1/p]\)-solidification\footnote{The solid structure on~\(\SS[1/p]\)
    is obtained from that on~\(\ZZ[1/p]\)
    constructed in~\cite[Theorem~8.1]{Condensed}
    via the nilinvariance theorem
    proven in~\cite[Proposition~12.23]{Analytic}.
  } of~\(K\)
  is truncating.
  The key input is the fact that the \(\SS[1/p]\)-solidification
  (or equivalently, \(\ZZ[1/p]\)-solidification)
  of~\(\QQ_{p}\) vanishes.
\end{remark}

\subsection{Truncatedness}\label{ss:tr}

The main ingredient is the following:

\begin{proposition}\label{89d1a5228f}
  Let \(A\) be a condensed \(\E_{1}\)-algebra
  whose underlying condensed spectrum
  has a structure of an \(\RR\)-module.
  Then its solidification of
  the cyclic homology
  \(\HC(A/\QQ)\) is zero.
\end{proposition}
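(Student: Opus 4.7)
The plan is to equip $\HC(A/\QQ)$ with a natural condensed $\RR$-module structure and then invoke \cref{real_sol}, which says every condensed $\RR$-module solidifies to zero. Since solidification is a left adjoint and so preserves the colimit $\HC(A/\QQ) = \HH(A/\QQ)_{hS^{1}}$, it suffices to prove the stronger statement that $\HH(A/\QQ)^{\sol} = 0$, so I focus on Hochschild homology.

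The first step is to record that $\HH(-/\QQ)$, viewed as a functor from $\E_{1}$-algebras in condensed $\QQ$-modules to condensed $\QQ$-modules, is symmetric monoidal: from $\HH(B/\QQ) = B \otimes_{B \otimes_{\QQ} B^{\op}} B$, the identification $(R \otimes_{\QQ} S) \otimes_{\QQ} (R \otimes_{\QQ} S)^{\op} \simeq (R \otimes_{\QQ} R^{\op}) \otimes_{\QQ} (S \otimes_{\QQ} S^{\op})$, and the distributivity of relative tensor products, one obtains
\[
\HH(R \otimes_{\QQ} S/\QQ) \simeq \HH(R/\QQ) \otimes_{\QQ} \HH(S/\QQ).
\]

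Applying this with $R = \RR$ and $S = A$ and composing with the module structure map $\RR \otimes_{\QQ} A \to A$ (available because $A$ is, implicitly, an $\E_{1}$-algebra in condensed $\RR$-modules), I would obtain a pairing
\[
\HH(\RR/\QQ) \otimes_{\QQ} \HH(A/\QQ) \to \HH(A/\QQ)
\]
making $\HH(A/\QQ)$ a module over $\HH(\RR/\QQ)$. Since $\RR$ is commutative, the unit $\RR \to \HH(\RR/\QQ)$ is a map of $\E_{\infty}$-$\QQ$-algebras, so $\HH(\RR/\QQ)$ is an $\E_{\infty}$-$\RR$-algebra; in particular $\HH(A/\QQ)$ is a condensed $\RR$-module. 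Then \cref{real_sol} gives $\HH(A/\QQ)^{\sol} = 0$, whence $\HC(A/\QQ)^{\sol} = 0$ by the colimit-preservation noted above.

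The main subtlety is verifying the strong symmetric monoidality of $\HH$ in the condensed $\E_{1}$-setting: one must confirm that the bar-construction realization defining $\HH$ distributes over condensed tensor products over $\QQ$ and that the requisite associativity coherences for the resulting module structure are in place. These should be formal consequences of the presentable symmetric monoidal structure on condensed $\QQ$-modules, but deserve explicit care.
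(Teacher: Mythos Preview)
Your reduction from \(\HC\) to \(\HH\) via colimit-preservation of solidification is fine, but the main step has a genuine gap: you assume that \(A\) is ``implicitly, an \(\E_{1}\)-algebra in condensed \(\RR\)-modules,'' whereas the hypothesis only says that the \emph{underlying condensed spectrum} of \(A\) carries an \(\RR\)-module structure. These are not the same, since \(\RR\) is not a smashing localization of \(\SS\) (or of \(\QQ\)) in condensed spectra, so there is no reason the multiplication \(A\otimes A\to A\) is \(\RR\)-linear. Without \(\RR\)-linearity you do not get a module map \(\RR\otimes_{\QQ}A\to A\), and your monoidality argument producing an \(\RR\)-module structure on \(\HH(A/\QQ)\) breaks down. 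This weaker hypothesis is not an accident of phrasing: in the application to \cref{exc}, the Land--Tamme ring \(C=B\odot_{A}^{B'}A'\) is only known to have underlying spectrum \(B\otimes_{A}A'\), which inherits an \(\RR\)-action from the factor \(A'\); there is no reason the \(\E_{1}\)-structure on \(C\) is \(\RR\)-linear.

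The paper's argument avoids this issue entirely. It does not try to equip \(\HC(A/\QQ)\) (or \(\HH(A/\QQ)\)) with a global \(\RR\)-module structure. Instead it observes that \(\HC(A/\QQ)\) is a colimit (via the cyclic bar construction and the \(S^{1}\)-orbits) of objects of the form \(A^{\otimes_{\QQ}n}\), and each such tensor power admits \emph{some} \(\RR\)-module structure---for instance by letting \(\RR\) act on a single tensor factor---even though the simplicial face maps need not be \(\RR\)-linear. By \cref{real_sol} each term solidifies to zero, and since the kernel of the solidification functor \(\Cat{ConSp}\to\Cat{SolSp}\) is closed under colimits, the colimit \(\HC(A/\QQ)\) solidifies to zero as well. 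The point is that one only needs each term of the diagram to lie in the kernel, not that the diagram lifts to \(\RR\)-modules.
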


\begin{proof}
  Note that
  the kernel of
  the solidification functor \(\Cat{ConSp}\to\Cat{SolSp}\)
  is closed under colimits.
  Since \(\HC(A/\QQ)\)
  can be written as a colimit
  of objects admitting \(\RR\)-actions,
  the desired result follows from \cref{real_sol}.
\end{proof}

\begin{proof}[Proof of~\cref{tr}]
  According to Goodwillie's theorem~\cite{Goodwillie86},
  the fibers of
  \(K(A)\to K(\pi_0A)\)
  and
  \(\HC(A/\QQ)\to\HC(\pi_0A/\QQ)\)
  are equivalent up to a shift.
  The latter morphism
  solidifies to an equivalence
  by \cref{89d1a5228f},
  hence so does the former.
\end{proof}

\subsection{Consequences}\label{ss:lt}

Land--Tamme~\cite[Section~3]{LandTamme19} showed that
the truncatedness property of a localizing invariant has many useful consequences.
We use their argument to show
the excision and nilinvariance properties:

\begin{corollary}\label{exc}
  Let
  \begin{equation*}
    \begin{tikzcd}
      A\ar[r]\ar[d]&
      A'\ar[d]\\
      B\ar[r]&
      B'
    \end{tikzcd}
  \end{equation*}
  be a cartesian square of condensed \(\E_1\)-rings
  such that
  every term is connective
  and
  the canonical map \(\pi_0(B\otimes_{A}A')\to\pi_0(B')\)
  is an equivalence.
  If the underlying condensed spectra
  of \(A'\) and~\(B'\)
  have \(\RR\)-module structures,
  \(K(\X)^{\sol}\) carries
  the square to a cartesian square
  of condensed spectra.
\end{corollary}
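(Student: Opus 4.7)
The plan is to adapt the strategy of Land--Tamme~\cite{LandTamme19}, substituting our truncatedness result~\cref{tr} for their classical truncating hypothesis. Their key construction is entirely formal---it is a bar-type colimit that makes sense in any presentable symmetric monoidal \(\infty\)-category---so it transplants to condensed \(\E_{1}\)-algebras without modification.

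First, I would construct Land--Tamme's auxiliary connective condensed \(\E_{1}\)-algebra~\(D\) (their ``plus construction'' \(A'\odot^{A}_{B}B'\)), built by a bar-type colimit from~\(A\), \(A'\), \(B\), and~\(B'\). By the formal half of their theorem,~\(D\) fits into a commutative square
\begin{equation*}
  \begin{tikzcd}
    A\ar[r]\ar[d]&A'\ar[d]\\
    B\ar[r]&D
  \end{tikzcd}
\end{equation*}
together with a canonical map \(B'\to D\) compatible with the structure maps from~\(B\) and~\(A'\); moreover, this replaced square is sent to a cartesian square of condensed spectra by \emph{every} localizing invariant of condensed \(\E_{1}\)-algebras. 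The Milnor hypothesis \(\pi_{0}(B\otimes_{A}A')\xrightarrow{\sim}\pi_{0}(B')\) in our statement translates, via their bookkeeping, into \(\pi_{0}(B')\xrightarrow{\sim}\pi_{0}(D)\), and \(\fib(B'\to D)\) in condensed spectra admits a canonical filtration whose associated graded pieces are shifts of iterated \(A'\)-relative tensor products of~\(\fib(A\to A')\).

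Second, I would observe that~\(D\) is a connective \(\E_{1}\)-\(A'\)-algebra by construction, so its underlying condensed spectrum is an \(\RR\)-module since \(A'\) is. Thus \cref{tr} applies to both~\(B'\) and~\(D\), giving the chain of equivalences \(K(B')^{\sol}\simeq K(\pi_{0}B')^{\sol}\simeq K(\pi_{0}D)^{\sol}\simeq K(D)^{\sol}\). Combining this equivalence with the cartesianness of the replaced square under \(K(\X)^{\sol}\), I would conclude that the original square is likewise sent to a cartesian square of condensed spectra, as desired.

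The main work is not mathematical but organizational: one has to check that Land--Tamme's bar-colimit construction of~\(D\) and the associated filtration on \(\fib(B'\to D)\) go through in the condensed setting. Since everything is built from colimits and relative tensor products in the presentable symmetric monoidal \(\infty\)-category \(\Cat{ConSp}\), this should be essentially formal; the only delicate point I anticipate is tracking module structures carefully enough to guarantee that~\(D\) inherits \(\RR\)-linearity from~\(A'\), since without this the appeal to \cref{tr} would break down.
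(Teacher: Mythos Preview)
Your proposal is correct and follows essentially the same route as the paper: invoke the condensed Land--Tamme construction, reduce to comparing \(K\) of the auxiliary ring with \(K(B')\) after solidification, and conclude from \cref{tr} since both carry \(\RR\)-module structures and share the same~\(\pi_{0}\). One small slip: in Land--Tamme's setup the canonical map runs \(D\to B'\) (the paper writes \(C=B\odot_{A}^{B'}A'\to B'\)), not \(B'\to D\), but this does not affect your argument.
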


\begin{proof}
  We follow the proof of~\cite[Theorem~3.3]{LandTamme19}.
  Let \(C=B\odot_A^{B'}A'\to B'\)
  be (the condensed version of) the Land--Tamme construction.
  By~\cite[Theorem~1.3]{LandTamme19},
  it suffices to show that
  \(K(C)\to K(B')\) solidifies to an equivalence.
  Since the underlying condensed spectrum of~\(C\)
  is equivalent to \(B\otimes_{A}A'\),
  it has an \(\RR\)-module structure.
  Therefore,
  we obtain the desired result
  by \cref{tr}.
\end{proof}

\begin{corollary}\label{nil}
  Let \(A\to B\) be a surjection of static condensed rings
  over~\(\RR\) whose kernel is nilpotent.
  Then \(K(A)\to K(B)\) solidifies to an equivalence.
\end{corollary}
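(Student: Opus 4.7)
The approach combines excision (\cref{exc}) and truncatedness (\cref{tr}) through the deformation-theoretic description of square-zero extensions, in the spirit of Land--Tamme's argument. I would first reduce to the case $I^{2} = 0$, where $I = \ker(A \to B)$, by induction on the nilpotence index $n$ with $I^{n} = 0$. For $n \geq 2$, factor $A \twoheadrightarrow A/I^{n-1} \twoheadrightarrow B$; the first surjection has kernel $I^{n-1}$, which is square-zero since $(I^{n-1})^{2} = I^{2n-2} \subseteq I^{n} = 0$, while the second has kernel $I/I^{n-1}$ of nilpotence index at most $n-1$. By two-out-of-three for solid equivalences, it then suffices to handle the square-zero case.

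Assume $I^{2} = 0$, so $A$ is a square-zero extension of $B$ by the $B$-bimodule $I$. Standard deformation theory of $\E_{1}$-algebras classifies such extensions by derivations $d\colon B \to B \oplus I[1]$ into the trivial square-zero extension $B \oplus I[1]$ (a connective condensed $\E_{1}$-algebra with $\pi_{0} = B$ and $\pi_{1} = I$), realizing $A$ as the associated pullback and yielding a cartesian square of connective condensed $\E_{1}$-algebras
\begin{equation*}
\begin{tikzcd}
A \ar[r] \ar[d] & B \ar[d, "d"] \\
B \ar[r, "0"'] & B \oplus I[1]\rlap.
\end{tikzcd}
\end{equation*}
Since $B \otimes_{A} B$ has $\pi_{0} = A/I \otimes_{A} A/I = B = \pi_{0}(B \oplus I[1])$, and since both $B$ and $B \oplus I[1]$ inherit $\RR$-module structures (the latter using that $I$ is a $B$-module, hence an $\RR$-module), \cref{exc} applies and makes this square cartesian after solidified $K$-theory.

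To finish, I would invoke \cref{tr} on the $\RR$-linear connective condensed $\E_{1}$-algebra $B \oplus I[1]$: the augmentation $B \oplus I[1] \to \pi_{0}(B \oplus I[1]) = B$ induces an equivalence $K(B \oplus I[1])^{\sol} \simeq K(B)^{\sol}$. Since both $d$ and the trivial derivation $0$ are sections of this augmentation, after identification the right-hand and bottom edges of the solidified cartesian square both become the identity on $K(B)^{\sol}$, whose pullback along itself is again $K(B)^{\sol}$. Hence $K(A) \to K(B)$ solidifies to an equivalence. The main subtlety is verifying the square-zero classification as a cartesian square of \emph{condensed} $\E_{1}$-algebras; this is essentially the statement that the obstruction-theoretic description of square-zero extensions passes to the condensed setting, which should follow by applying the usual Postnikov-tower analysis hypersheaf-wise, but is the step requiring the most care.
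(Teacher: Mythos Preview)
Your proof is correct and is exactly the argument the paper has in mind: the paper's proof of \cref{nil} simply cites \cite[Corollary~3.5]{LandTamme19}, whose proof proceeds precisely via the reduction to square-zero extensions and the deformation-theoretic pullback square along a derivation into \(B\oplus I[1]\), followed by excision and truncatedness. Your concern about the square-zero classification in the condensed setting is mild, since the relevant Postnikov/obstruction theory is available in any presentable stable setting (or can be carried out pointwise on extremally disconnected sets and then sheafified).
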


\begin{proof}
  The proof of~\cite[Corollary~3.5]{LandTamme19}
  works here as well.
\end{proof}

\section{Discreteness of solidified \texorpdfstring{\(K\)}{K}-theory}\label{s:dis}

By solidifying \(K\) of a condensed ring,
we still get a condensed spectrum instead of a (discrete) spectrum.
In this section,
we observe that
it is automatically discrete in certain cases:

\begin{theorem}\label{dis_ban}
  Let \(A\) be a compactly generated Hausdorff
  real topological algebra\footnote{We do not impose local convexity here;
    see also \cref{xv4il4}.
  }
  such that \(\GL_{r}(A)\subset\Mat_{r}(A)\) is open
  for \(r\geq0\)\footnote{When \(A\) is commutative,
    it suffices to check this for \(r=1\).
  }.
  Then \((K^{\cn}(A))^{\sol}\)
  is discrete.
\end{theorem}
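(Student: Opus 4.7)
The plan is to combine the ultrafilter criterion for discreteness with the openness hypothesis on~\(\GL_{r}(A)\). By \cref{conservative}, showing that \((K^{\cn}(A))^{\sol}\) is discrete amounts to showing that, for every ultrafilter~\(\mu\) on every set~\(I\), the canonical comparison
\[
  \injlim_{\mu(J)=1}(K^{\cn}(A))^{\sol}(\beta J) \to (K^{\cn}(A))^{\sol}(\ast)
\]
is an equivalence of spectra. Since the solidification of a connective condensed spectrum remains connective by \cref{xdjyu0}, this condition factors through each homotopy sheaf~\(\pi_{n}\) separately, reducing the problem to a statement about solidified condensed abelian groups.

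Next I would pin down what the condensed group \(\GL_{r}(\Cls{C}(-;A))\) actually is. The openness of \(\GL_{r}(A)\subset\Mat_{r}(A)\), combined with compact generation and Hausdorffness of~\(A\), should give for each compactum~\(S\) a homeomorphism \(\Cls{C}(S;\GL_{r}(A))=\GL_{r}(\Cls{C}(S;A))\): openness validates the pointwise criterion for invertibility, and continuity of inversion (implied by openness in our setting) then lets one produce the continuous inverse by testing on compact subspaces of~\(S\). Thus the condensed group \(\GL_{r}(\Cls{C}(-;A))\) is simply the condensed group associated to the topological group~\(\GL_{r}(A)\). Feeding this into the plus-construction presentation \(K^{\cn}(R)\simeq\ZZ\times B\GL(R)^{+}\)---which is functorial condensedly and compatible with the left adjoint solidification---reduces the problem to showing that the solidification of the condensed infinite loop space built from the topological classifying spaces~\(B\GL_{r}(A)\) is discrete.

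The main obstacle, and the bulk of the technical work, is this last reduction. Using \cref{dis} a second time, one would try to show that for any ultrafilter~\(\mu\) on~\(I\) and any nice topological classifying space \(X=B\GL_{r}(A)\), the stalk \(\injlim_{\mu(J)=1}\Cls{C}(\beta J;X)\) agrees with the underlying anima of~\(X\) after passing to solidification. The openness of \(\GL_{r}(A)\subset\Mat_{r}(A)\) should supply exactly the local structure needed to make continuous maps \(\beta J\to X\) locally constant up to homotopy near the ultrafilter point: perturbing a continuous family of invertible matrices by one in a small open neighborhood of the identity yields a canonical null-homotopy, and the combinatorics of ultrafilters ought to let one patch these local statements into a genuine equivalence. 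Controlling how solidification interacts with the bar and plus constructions while harvesting discreteness from the topology of~\(\GL_{r}(A)\) is where I expect the genuine subtleties to live, and where the footnote remark that local convexity is not assumed makes me suspect the argument hinges on a clever use of compact generation rather than on any linear-analytic input.
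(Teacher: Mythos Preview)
Your proposal has the right destination in mind---reducing to a statement about the condensed classifying spaces \(B\GL_{r}(A)\)---but the route you sketch has two genuine gaps.

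First, the ultrafilter criterion of \cref{dis} tests discreteness of a condensed object by looking at its values on \(\beta J\). Applied to \((K^{\cn}(A))^{\sol}\), this requires knowing \((K^{\cn}(A))^{\sol}(\beta J)\), which is \emph{not} \(K^{\cn}(\Cls{C}(\beta J;A))\) unless \(K^{\cn}(A)\) was already solid. Solidification is a left adjoint on condensed spectra, and its sections over a fixed \(\beta J\) are in general inaccessible. Your perturbation argument in the last paragraph probes \(K^{\cn}(\Cls{C}(\beta J;A))\), i.e.\ the unsolidified object, and there is no mechanism offered for transferring that information across solidification. The paper never attempts to compute sections of the solidified object directly; instead it exhibits \((K^{\cn}(A))^{\sol}\) as a colimit of things already known to be discrete.

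Second, the assertion that the plus construction is ``compatible with the left adjoint solidification'' is doing unjustified work. The paper avoids the plus construction entirely. The actual reduction runs through the Breen--Deligne resolution (\cref{bd}): the group completion \(\Vect'(A)^{\gp}\) is written as a geometric realization of a simplicial object whose terms are finite sums of \(\SS[\Vect'(A)^{n}]\). Solidification preserves colimits, so it suffices that each \(\SS[\Vect'(A)^{n}]^{\sol}\) be discrete; unwinding via the bar resolution, this reduces to \(\SS[\GL_{r}(A)]^{\sol}\) being discrete (\cref{xufs5h}). The endgame is then \cref{lslc}: because \(\GL_{r}(A)\) is open in the topological vector space \(\Mat_{r}(A)\), every point has a contractible (indeed star-shaped) neighborhood, and an open hypercover by contractible pieces forces \(\SS[\GL_{r}(A)]^{\sol}\simeq\SS[\GL_{r}(A)^{\dis}]\) to be discrete. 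This is where the openness hypothesis is actually spent, and it requires no ultrafilter bookkeeping. The \(\pi_{0}\) piece is handled separately by the Kaplansky argument (\cref{kap_dis}), which shows \(K_{0}(A)\) is discrete even before solidification.
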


\begin{theorem}\label{dis_alg}
  Let \(A\) be
  a base change of
  a connective \(\E_{1}\)-algebra over~\(\RR^{\delta}\)
  along \(\RR^{\delta}\to\RR\).
  Then \(K(A)^{\sol}\) is discrete.
\end{theorem}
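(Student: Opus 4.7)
The plan is to prove \cref{dis_alg} by a sequence of reductions ending with an application of~\cref{dis} to a base case handled by the Banach theorem~\cref{dis_ban}.

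First, I would apply the truncatedness theorem~\cref{tr} to reduce to the static case. Since \(A\) carries an \(\RR\)-module structure coming from the base change, \cref{tr} gives \(K(A)^{\sol}\simeq K(\pi_{0}A)^{\sol}\). Because \(\RR^{\delta}\) behaves like a field in the condensed setting — every \(\RR^{\delta}\)-module is flat — the base change \((\X)\otimes_{\RR^{\delta}}\RR\) preserves \(\pi_{0}\), so \(\pi_{0}A=(\pi_{0}A_{0})\otimes_{\RR^{\delta}}\RR\) is again of the required form. We may thus assume \(A=B\otimes_{\RR^{\delta}}\RR\) for a static discrete \(\RR\)-algebra~\(B\).

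Second, I would reduce to the case where \(B\) is finitely generated over~\(\RR\). Writing \(B=\injlim_{i}B_{i}\) as a filtered colimit of its finitely generated \(\RR\)-subalgebras, we obtain \(A=\injlim_{i}B_{i}\otimes_{\RR^{\delta}}\RR\). Algebraic \(K\)-theory commutes with filtered colimits of rings, solidification preserves all colimits, and discrete condensed spectra are closed under filtered colimits by~\cref{dis_colim}; so it suffices to treat the case of finitely generated~\(B\).

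Third, for finitely generated \(B\), I would verify discreteness via~\cref{dis}: the condition there asks that for every ultrafilter~\(\mu\) on a set~\(I\), the canonical map
\begin{equation*}
  \injlim_{\mu(J)=1}K(A)^{\sol}(\beta J)\to K(A)^{\sol}(*)
\end{equation*}
be an equivalence. Using the explicit description \(A(\beta J)=B\otimes_{\RR}\Cls{C}(\beta J;\RR)=B\otimes_{\RR}\ell^{\infty}(J)\) (valid at extremally disconnected sets because \(B\) is a flat \(\RR^{\delta}\)-module) together with the fact that algebraic \(K\)-theory preserves filtered colimits, the ultrafilter stalks of \(K(A)\) identify with \(K(B\otimes_{\RR}R_{\mu})\), where \(R_{\mu}=\injlim_{\mu(J)=1}\ell^{\infty}(J)\) is a bounded ultrapower of~\(\RR\).

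The hard part will be this last step: showing that solidification makes the inclusion \(B\hookrightarrow B\otimes_{\RR}R_{\mu}\) invisible, so that the ultrafilter stalks of \(K(A)^{\sol}\) collapse to the value at the point. My strategy is to reduce to the case \(B=\RR\) by combining nilinvariance~(\cref{nil}) to collapse nilradicals and excision~(\cref{exc}) to reduce quotients of polynomial \(\RR\)-algebras to polynomial algebras themselves, and then further to \(\RR\) via polynomial invariance. For \(B=\RR\) the condensed ring \(A=\RR\) is a Banach algebra, so the discreteness of \(K^{\cn}(A)^{\sol}\) follows from~\cref{dis_ban}, with the nonconnective statement obtained by inverting a Bott-type element. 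Making the passage through these reductions rigorous — in particular, managing the interplay between the tensor product \((\X)\otimes_{\RR^{\delta}}\RR\) and the ultrapower \(R_{\mu}\), and verifying that solidification commutes with the excision squares used along the way — is the technical core of the argument.
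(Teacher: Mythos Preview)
Your reductions to the static and finitely generated cases via \cref{tr} and a filtered-colimit argument are fine and match the paper. The trouble begins at the ``hard part''.

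First, there is a type mismatch in your use of \cref{dis}. You compute the ultrafilter stalk of \(K(A)\) as \(K(B\otimes_{\RR}R_{\mu})\), but \cref{dis} must be applied to the condensed spectrum \(K(A)^{\sol}\), and you have no access to \(K(A)^{\sol}(\beta J)\): solidification is a left adjoint on \(\Cat{ConSp}\) and does not interact with evaluation at \(\beta J\) in any way that would let you identify these values with \(K\) of a ring. The phrase ``solidification makes the inclusion \(B\hookrightarrow B\otimes_{\RR}R_{\mu}\) invisible'' does not correspond to a well-posed statement about the condensed spectrum \(K(A)^{\sol}\).

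Second, even granting some stalk-level formulation, the proposed reduction of a general finitely generated (possibly noncommutative) \(\RR\)-algebra~\(B\) to \(\RR\) via \cref{nil}, \cref{exc}, and ``polynomial invariance'' cannot work. Nilinvariance only handles nilpotent kernels; excision handles Milnor-type squares, not arbitrary quotients of free algebras; and \(K\)-theory is not \(\mathbb{A}^{1}\)-invariant for singular or noncommutative rings. There is no dévissage from, say, \(\RR\langle x,y\rangle/(xy-yx-1)\) or \(\RR[x,y]/(x^{2}+y^{2}-1)\) to \(\RR\) along these lines. Finally, for the case \(B=\RR\) you invoke \cref{dis_ban}, which only gives discreteness of \(K^{\cn}(\RR)^{\sol}\); your ``inverting a Bott-type element'' produces \(K^{\top}\), not nonconnective algebraic \(K\), so even the base case of your induction is not closed.

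The paper avoids all of this by never trying to evaluate \(K(A)^{\sol}\) at profinite sets. Instead it uses \cref{xufs5h} (via the Breen--Deligne resolution of \cref{bd}) to reduce the discreteness of \((\tau_{\geq1}K(A))^{\sol}\) to that of \(\SS[\GL_{r}(A)]^{\sol}\), then exhibits \(\GL_{r}(A)\) (after a degree filtration) as a real algebraic variety so that \cref{lslc} applies. The negative degrees are handled by descending induction using the fundamental theorem of \(K\)-theory, which keeps one inside the class of base-changed algebras. This geometric route through \(\GL_{r}\) is the missing idea in your proposal.
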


\begin{remark}\label{xxqbww}
  Real Banach algebras satisfy the condition of \cref{dis_ban},
  but general real Fréchet algebras do not:
  Consider the ring of entire functions \(\Cls{O}(\CC)\)
  with the topology generated by the seminorms
  \(f\mapsto\sup_{\lvert z\rvert\leq n}\lvert f(z)\rvert\)
  for \(n\geq0\).
  Then any neighborhood of the constant function~\(1\)
  contains a function that vanishes somewhere.
\end{remark}

We deduce \cref{dis_ban,dis_alg} in \cref{ss:yes} from two ingredients:
a generalization of the Breen--Deligne resolution, which we construct in \cref{ss:bd},
and a further study
on the solidification of a free condensed spectrum in \cref{ss:useful}.

\subsection{Group completion via the Breen--Deligne resolution}\label{ss:bd}

We write \(\Cat{CMon}\) for the \(\infty\)-category
of \(\E_{\infty}\)-monoids
and \(\Cat{CGrp}\) for the subcategory
spanned by grouplike \(\E_{\infty}\)-monoids.
The inclusion has a left adjoint
\((\X)^{\gp}\colon\Cat{CMon}\to\Cat{CGrp}\).
We prove the following:

\begin{theorem}\label{bd}
  There is a functor
  \begin{equation*}
    \Cat{CMon}
    \xrightarrow{\BD}
    \Fun(\Cat{\Delta}^{\op},\Cat{Sp}_{\geq0})
  \end{equation*}
  satisfying the following:
  \begin{itemize}
    \item
      There is an equivalence
      \(\lvert\BD(\X)_{\bullet}\rvert\simeq(\X)^{\gp}\)
      of functors from \(\Cat{CMon}\)
      to \(\Cat{Sp}_{\geq0}\simeq\Cat{CGrp}\).
    \item
      For each \(\bullet\),
      the functor
      \(\BD(\X)_{\bullet}\colon\Cat{CMon}\to\Cat{Sp}_{\geq0}\)
      is a finite direct sum
      of the functor of the form \(\SS[(\X)^{n}]\)
      for \(n\geq0\).
  \end{itemize}
\end{theorem}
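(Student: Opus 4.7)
The plan is to construct $\BD$ as a spectral refinement of the classical Breen--Deligne resolution of abelian groups.

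First, I would invoke the classical theorem of Breen: for any abelian group $A$, there is a functorial simplicial abelian group $\BD_0(A)_\bullet$ with each term a finite direct sum of $\ZZ[A^{n_i}]$'s and $\lvert\BD_0(A)_\bullet\rvert\simeq A$ in $\cat{D}(\Cat{Ab})$. The face and degeneracy maps are built universally from two kinds of operations: (i) maps $\ZZ[\phi]$ for $\phi\colon A^n\to A^m$ assembled from projections, diagonals, and multiplications in $A$, and (ii) $\ZZ$-linear combinations via the abelian group structure on $\ZZ[A^n]$.

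Second, I would lift this combinatorial data to $\Cat{Sp}_{\geq 0}$. For any $M\in\Cat{CMon}$, each $\phi\colon M^n\to M^m$ of type~(i) makes sense using only the $\E_{\infty}$-monoid structure of $M$, yielding $\Sigma^{\infty}_{+}(\phi)\colon\SS[M^n]\to\SS[M^m]$; the combinations of type~(ii) lift to $\SS$-linear combinations via the stable structure of $\Cat{Sp}_{\geq 0}$. Applying these lifts to the template of $\BD_0$ produces $\BD(M)_\bullet$ with each term a finite direct sum of $\SS[M^{n_{k,i}}]$'s.

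Third, I would verify $\lvert\BD(M)_\bullet\rvert\simeq M^{\gp}$. Both functors preserve sifted colimits---$(-)^{\gp}$ as a left adjoint, and $\lvert\BD(-)_\bullet\rvert$ since both geometric realization and each $\SS[(-)^n]=\Sigma^{\infty}_{+}(-)^n$ do---so it suffices to verify the equivalence on free commutative monoids $M=\NN^r$, where $M^{\gp}=\ZZ^r$ is discrete.

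The main obstacle is this last step. A naive spectral lift produces realizations with spurious higher stable-stem contributions from each $\SS[M^{n_{k,i}}]$ that do not appear in the discrete target $\ZZ^r$. Overcoming this requires the lifts of the classical face maps to carry higher-coherence data that cancels these contributions; this exploits the fact that the full spectrum of natural transformations $\operatorname{Nat}(\SS[(-)^n],\SS[(-)^m])$ in $\Fun(\Cat{CMon},\Cat{Sp}_{\geq 0})$ is strictly richer than $\SS$-linear combinations of $\Sigma^{\infty}_{+}$-image maps. Existence of such coherent lifts can be established by an obstruction-theoretic argument resting on the universal property of $M^{\gp}$ as a left adjoint.
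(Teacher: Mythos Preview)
Your proposal has a genuine gap precisely where you flag the ``main obstacle''. Lifting the classical integral Breen--Deligne template termwise to~\(\SS\) does produce, for each~\(M\), objects of the correct shape \(\bigoplus_i\SS[M^{n_i}]\), but you have not explained why the resulting simplicial object (assuming you can even assemble one coherently, which already requires more than termwise lifting since the simplicial identities must hold up to coherent homotopy over~\(\SS\)) realizes to~\(M^{\gp}\) rather than to something that merely becomes~\(M^{\gp}\) after~\(\otimes_{\SS}\ZZ\). Your last paragraph gestures at an ``obstruction-theoretic argument resting on the universal property of~\(M^{\gp}\)'', but no such argument is given, and it is not clear what the relevant obstruction groups would be or why they vanish. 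As it stands this is a description of the difficulty, not a resolution of it.

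The paper avoids this lifting problem entirely. Rather than starting from the integral resolution and trying to lift it, it works directly in \(\Fun(\cat{C},\Cat{Sp}_{\geq0})\), where \(\cat{C}\) is the full subcategory of \(\Cat{CMon}\) on free commutative monoids on finite sets, and proves that the group-completion functor \(F\colon\cat{C}\to\Cat{Sp}_{\geq0}\) is \emph{pseudocoherent} there. The argument is short: \(F(M)=\injlim_n\Sigma^{-n}\Sigma^{\infty}B^nM\), Freudenthal makes this limit eventually constant in each degree, and each \(\Sigma^{\infty}B^nM\) is a geometric realization of a (multi)simplicial object whose terms are finite products of copies of~\(M\), hence pseudocoherent. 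Pseudocoherence then yields, via the general machinery of~\cite[Lemma~4.3]{ClausenMathewMorrow21} and the stable Dold--Kan correspondence~\cite[Theorem~1.2.4.1]{LurieHA}, a filtration of~\(F\) whose associated graded pieces are shifts of finite sums of representables; turning this into a simplicial object and left-Kan-extending to~\(\Cat{CMon}\) gives~\(\BD\). No obstruction theory and no comparison with the classical resolution are needed.
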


\begin{definition}\label{fd6bfc3942}
  We fix a functor~\(\BD\)
  satisfying \cref{bd}
  throughout this paper
  and call it the \emph{Breen--Deligne resolution}.
\end{definition}

\begin{example}\label{3c6790008c}
  By restricting~\(\BD\) to \(\Cat{Ab}\subset\Cat{CMon}\)
  and composing it with \(\ZZ\otimes_{\SS}\X\),
  we get a classical Breen--Deligne resolution
  (cf.~\cite[Theorem~4.5]{Condensed})
  in light of the Dold--Kan correspondence.
\end{example}

\begin{example}\label{e7d4d403fa}
  Let \(\cat{X}\) be an arbitrary \(\infty\)-topos.
  Then by tensoring~\(\BD\),
  get the functor
  \begin{equation*}
    \CMon(\cat{X})
    \to
    \Fun(\Cat{\Delta}^{\op},\Sp(\cat{X})_{\geq0})
    \to
    \Sp(\cat{X})_{\geq0}.
  \end{equation*}
  We also call this functor the Breen--Deligne resolution.
  In particular,
  if we apply this to \(\cat{X}=\Cat{ConAni}_{\kappa}\)
  and vary~\(\kappa\),
  we get its condensed version.
\end{example}

We show \cref{bd}
using the pseudocoherence argument
as in~\cite[Section~4.1]{ClausenMathewMorrow21}.
The key input is the following variant of~\cite[Lemma~4.24]{ClausenMathewMorrow21}:

\begin{lemma}\label{x252fc}
  Let \(\cat{C}\) denote the full subcategory
  of \(\Cat{CMon}\) spanned by
  the free commutative monoids on finite sets.
  The functor
  \begin{equation*}
    F\colon
    \cat{C}\hookrightarrow\Cat{CMon}
    \to\Cat{CGrp}\simeq\Cat{Sp}_{\geq0}
  \end{equation*}
  determined by group completion is pseudocoherent
  in \(\Fun(\cat{C},\Cat{Sp}_{\geq0})\);
  i.e.,
  for any \(i\),
  there is a compact object~\(G\)
  and an equivalence \(\tau_{\leq i}F\simeq\tau_{\leq i}G\).
\end{lemma}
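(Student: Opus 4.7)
The strategy is to adapt the pseudocoherence technique of Clausen--Mathew--Morrow~\cite[Section~4.1]{ClausenMathewMorrow21} to the group-completion setting. The starting observation is that $F$ takes a very simple form on objects: since group completion is a left adjoint and coproducts in~$\cat{C}$ coincide with those in~$\Cat{CMon}$, the Barratt--Priddy--Quillen theorem yields $F(\mathrm{Free}(S))\simeq\SS^{\oplus\lvert S\rvert}$ for every finite set~$S$, and the homotopy groups of~$\SS$ are finitely generated in each degree.

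The plan is to construct, by induction on~$i$, compact objects $G_{i}\in\Fun(\cat{C},\Cat{Sp}_{\geq 0})$ equipped with morphisms $G_{i}\to F$ whose cofibres are $(i+1)$-connective; this immediately yields $\tau_{\leq i}G_{i}\simeq\tau_{\leq i}F$ and hence the desired pseudocoherence. For the base case $i=0$, I would take $G_{0}=\SS[\Map_{\cat{C}}(\mathrm{Free}(*),-)]$, which is corepresentable and hence compact, together with its tautological map to~$F$ classified by the unit $1\in\pi_{0}F(\mathrm{Free}(*))\simeq\ZZ$. Evaluating on $\mathrm{Free}(\{1,\dots,k\})$ gives, on~$\pi_{0}$, the standard surjection $\ZZ[\NN^{k}]\twoheadrightarrow\ZZ^{k}$, whose kernel is generated by relations of the form $[\alpha+\beta]-[\alpha]-[\beta]$; these are universally parametrised by $\mathrm{Free}(\{1,2\})=\mathrm{Free}(*)\sqcup\mathrm{Free}(*)$, so attaching a single additional cell corepresentable by this object suffices to upgrade the map to a $\pi_{0}$-isomorphism.

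For the inductive step, the obstruction to improving~$G_{i}$ to~$G_{i+1}$ is encoded by $\pi_{i+1}(\cofib(G_{i}\to F))$, an abelian-group-valued functor on~$\cat{C}$. The key claim is that this obstruction is \emph{finitely presented} as an object of $\Fun(\cat{C},\Cat{Ab})$; once established, one can attach finitely many cells in the appropriate degree (finite coproducts of shifts of corepresentables) to kill it. This should follow from the finite generation of $\pi_{*}\SS$ in each degree, together with the fact that $F$ is determined by its value on $\mathrm{Free}(*)$ via coproduct preservation, so the obstruction functors inherit a bounded-complexity structure analogous to that of $\pi_{*}\SS$.

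The hardest part will be verifying this finite presentation at each inductive stage. The mapping anima $\Map_{\cat{C}}(\mathrm{Free}(*),\mathrm{Free}(\{1,\dots,k\}))\simeq\coprod_{n\geq 0}(\{1,\dots,k\}^{n})_{h\Sigma_{n}}$ is highly non-discrete, so controlling how the earlier cell attachments affect higher homotopy requires an honest analysis of the simplicial bar construction computing~$(\X)^{\gp}$, combined with the classical finite-generation statements for $H_{*}(B\Sigma_{n};\ZZ)$ that underlie Barratt--Priddy--Quillen.
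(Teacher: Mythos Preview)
Your outline sets up the inductive cell-attachment strategy correctly, but the step you yourself flag as hardest---showing that each obstruction functor $\pi_{i+1}\cofib(G_i\to F)$ is finitely presented in $\Fun(\cat{C},\Cat{Ab})$---is not actually carried out, and the sketch you give (``inherit a bounded-complexity structure analogous to that of $\pi_*\SS$'') is not an argument. Knowing that each value $F(c)$ is a pseudocoherent spectrum does not by itself imply that $F$ is pseudocoherent as a diagram: one must control the functoriality, and the non-discrete mapping anima you point to are precisely what make a direct verification unpleasant. Your proposal is thus an outline with the essential step missing.

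The paper bypasses this difficulty by a different and more structural argument. Rather than attaching cells degree by degree, it writes
\[
F(\X)\;\simeq\;\injlim_{n}\,\Sigma^{-n}\Sigma^{\infty}\bigl(B^{n}(\X)\bigr)
\]
and invokes Freudenthal to reduce to the pseudocoherence of each $\Sigma^{\infty}B^{n}(\X)$ separately. Now $B^{n}M$ is the geometric realization of a multisimplicial object whose terms are finite products $M^{k}$, and the functor $M\mapsto M^{k}$ on~$\cat{C}$ is corepresentable: since finite products and coproducts coincide in $\Cat{CMon}$, one has $M^{k}\simeq\Map_{\cat{C}}(\mathrm{Free}(\{1,\dots,k\}),M)$. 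Hence $\Sigma^{\infty}B^{n}(\X)$ is a geometric realization of corepresentables, and its skeletal filtration supplies the required tower of compact approximations directly. No analysis of $\pi_*\SS$, of $H_*(B\Sigma_n;\ZZ)$, or of the obstruction functors is needed; the Barratt--Priddy--Quillen identification you start from is never used.
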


\begin{proof}
  Concretely,
  we have
  \begin{equation*}
    F(\X)=
    \injlim_n
    \Sigma^{-n}\Sigma^{\infty}(B^n(\X)).
  \end{equation*}
  By the Freudenthal suspension theorem,
  it suffices to show that
  \(\Sigma^{\infty}(B^{n}(\X))\)
  is pseudocoherent for each~\(n\).
  Since \(BM\) can be computed as
  a geometric realization
  of \(M^{\bullet}\),
  we can compute
  \(B^{n}M\) as a geometric realization
  of a multisimplicial object
  whose terms are finite products of copies of~\(M\).
  The desired result follows from this observation.
\end{proof}

\begin{proof}[Proof of \cref{bd}]
  We use the notation of \cref{x252fc}.
  By \cref{x252fc},
  according to \(\text{(1)}\Rightarrow\text{(3)}\)
  of~\cite[Lemma~4.3]{ClausenMathewMorrow21},
  we can write~\(F\) as a colimit of
  \begin{equation*}
    0=
    D_{-1}\to
    D_{0}\to
    \dotsb
  \end{equation*}
  such that \(\Sigma^{-n}\cofib(D_{n-1}\to D_{n})\)
  is a finite direct sum of representables.
By applying~\cite[Theorem~1.2.4.1]{LurieHA}
  to this diagram,
  we obtain a functor
  \(\cat{C}\to\Fun(\Cat{\Delta}^{\op},\Cat{Sp})\),
  whose left Kan extension
  satisfies the desired conditions
  by~\cite[Remark~1.2.4.7]{LurieHA}.
\end{proof}

\subsection{Digression: discretization and shape}\label{ss:core}

By \cref{bd},
to understand the solidification of a condensed \(\E_{\infty}\)-group~\(M^{\gp}\),
we need to understand the solidification of \(\SS[M^{n}]\).
Our goal in this section is to get a sufficient condition
for \(\SS[X]^{\sol}\),
or equivalently \(\ZZ[X]^{\sol}\), to be discrete
for a compactum~\(X\).
We approach this by
considering a similar question on discretization,
which we have a complete answer in terms of sheaf theory.

We first introduce general machinery.
Recall that discretization,
which is the left adjoint
to the inclusion \(\Cat{Ani}\hookrightarrow\Cat{ConAni}\),
does not exist (see \cref{xh0gyx}) but
it exists after \(\Pro\)-extending the target.
This was considered by Mair
in~\cite[Section~7.2]{Mair}
up to size issues,
which we address here:

\begin{proposition}\label{xji2wv}
  Let \(Y\) be a condensed anima.
  Then the functor
  \(\Cat{Ani}\to\Cat{Ani}\)
  given by
  \(X\mapsto\Map(Y,X)\)
  is left exact and accessible.
\end{proposition}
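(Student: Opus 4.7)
The plan is to describe the functor $F \colon X \mapsto \Map(Y, X)$ --- where $X \in \Cat{Ani}$ is considered as a condensed anima via \cref{xba8hv} --- as a small limit of simpler functors, and to verify left exactness and accessibility termwise.

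First, I would pick an infinite strong limit cardinal~$\kappa$ such that $Y \in \Cat{ConAni}_\kappa$; this is possible because, by construction, every condensed anima lies in $\Cat{ConAni}_\kappa$ for some~$\kappa$. Since $\Cat{ConAni}_\kappa$ is presentable, $Y$ admits a presentation as a small colimit of representables
\begin{equation*}
Y \simeq \injlim_{i \in I} h_{S_i}
\end{equation*}
with each $S_i \in \Cat{EDis}_\kappa$ and $I$ a small indexing category. Applying $\Map(-, X)$ then yields
\begin{equation*}
F(X) \simeq \projlim_{i \in I} X(S_i),
\end{equation*}
where $X(S_i)$ denotes the value at~$S_i$ of the condensed anima associated with~$X$.

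Next, I would check that for each fixed $S \in \Cat{EDis}_\kappa$, the functor $X \mapsto X(S)$ from $\Cat{Ani}$ to $\Cat{Ani}$ is left exact and preserves filtered colimits. The discrete inclusion $\Cat{Ani} \hookrightarrow \Cat{ConAni}_\kappa$ is the inverse image of the terminal geometric morphism and therefore preserves finite limits and all colimits. The evaluation $\ev_S \colon \Cat{ConAni}_\kappa \to \Cat{Ani}$, through the identification $\Cat{ConAni}_\kappa \simeq \Fun^{\times}(\Cat{EDis}_\kappa^{\op}, \Cat{Ani})$ available for strong limit~$\kappa$, preserves all limits and all filtered colimits --- the latter because finite products commute with filtered colimits in~$\Cat{Ani}$. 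Composing these two gives the claim.

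Both properties then pass to the $I$-indexed limit~$F$: left exactness is immediate since a limit of left exact functors is left exact, and for accessibility one chooses a regular cardinal $\mu > |I|$ and uses that $\mu$-filtered colimits commute with $\mu$-small limits in~$\Cat{Ani}$, so that $F$ preserves $\mu$-filtered colimits and is thus $\mu$-accessible. The subtle point that the statement is flagging --- and the only real obstacle --- is the reduction to a fixed~$\kappa$ and a small~$I$ at the very outset; this is the ``size issue'' referenced and is resolved by the explicit definition of $\Cat{ConAni}$ as $\bigcup_\kappa \Cat{ConAni}_\kappa$ with each piece genuinely presentable.
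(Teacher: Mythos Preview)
Your proof is correct. Both your argument and the paper's begin by fixing a strong limit cardinal~$\kappa$ with $Y \in \Cat{ConAni}_\kappa$, which is the step that resolves the size issue. From there the paper proceeds more abstractly: it simply observes that the inclusion $\Cat{Ani} \hookrightarrow \Cat{ConAni}_\kappa$ is left exact and accessible (by \cref{x9djsw}), and implicitly that $\Map_{\Cat{ConAni}_\kappa}(Y,\X)$ is left exact and accessible because it is corepresentable on a presentable $\infty$-category; the composite then inherits both properties. Your argument instead unpacks the second factor by presenting~$Y$ as a small colimit of representables and reducing to evaluation functors, then taking a small limit. This is more hands-on and yields an explicit accessibility rank, while the paper's two-line factorization is quicker but leaves the bound implicit.
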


\begin{proof}
  We fix an infinite strong limit cardinal~\(\kappa\)
  such that \(Y\) is \(\kappa\)-condensed.
  Since
  the inclusion \(\Cat{Ani}\hookrightarrow\Cat{ConAni}_{\kappa}\)
  is left exact and accessible by \cref{x9djsw},
  the desired result follows.
\end{proof}

\begin{definition}\label{x9u2yo}
  We write
  \begin{equation*}
    (\X)^{\Prodis}\colon
    \Cat{ConAni}\to\Pro(\Cat{Ani})
  \end{equation*}
  for the functor
  given in \cref{xji2wv}
  and call it the \emph{\(\Pro\)-discretization} functor.
\end{definition}

\begin{example}\label{x1l4m4}
  When a condensed anima~\(X\)
  admits a discretization,
  the \(\Pro\)-discretization
  \(X^{\Prodis}\) is constant
  and is equivalent to~\(X^{\dis}\).
\end{example}

\begin{theorem}\label{xyrqub}
  Let \(X\) be a compactum
  regarded as a condensed anima.
  Then the shape (see~\cite[Definition~7.1.6.3]{LurieHTT} for the definition)
  of the Postnikov-completed sheaf \(\infty\)-topos~\(\Shv^{\Post}(X)\)
  is canonically equivalent
  to the \(\Pro\)-discretization of~\(X\).
\end{theorem}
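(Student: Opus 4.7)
The plan is to show, via Yoneda in $\Pro(\Cat{Ani})$, that both sides of the claimed equivalence corepresent the same functor $\Cat{Ani} \to \Cat{Ani}$. By \cref{x9u2yo}, $X^{\Prodis}$ corepresents $A \mapsto \Map_{\Cat{ConAni}}(X, A)$, and by the definition of shape in~\cite[Definition~7.1.6.3]{LurieHTT}, $\Sh(\Shv^{\Post}(X))$ corepresents $A \mapsto \Gamma(\Shv^{\Post}(X); \underline{A})$, where $\underline{A}$ denotes the constant sheaf with value $A$. So the task reduces to producing a natural equivalence
\[
    \Map_{\Cat{ConAni}}(X, A) \simeq \Gamma(\Shv^{\Post}(X); \underline{A})
\]
of functors in $A \in \Cat{Ani}$.

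A useful preliminary step is to recognize the left-hand side itself as a shape. For any $\infty$-topos $\cat{X}$ and any $X \in \cat{X}$, the base-change adjunction gives $\Map_{\cat{X}}(X, A) \simeq \Gamma(\cat{X}_{/X}; \underline{A})$, so with $\cat{X} = \Cat{ConAni}$ we obtain $\Sh(\Cat{ConAni}_{/X}) \simeq X^{\Prodis}$. Both sides of the desired equivalence are left-exact and accessible in $A$ (the left-hand side by \cref{xji2wv}, the right-hand side by general topos theory), so by Postnikov towers we reduce to the case $A = K(G, n)$, where each side computes a shift of the sheaf cohomology $H^n(X; G)$ with constant coefficients.

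The main obstacle is therefore to identify condensed cohomology of $X$ with topological sheaf cohomology of $X$ for constant abelian coefficients. My plan is to use proper descent along finite closed covers. By compactness of $X$, every open cover admits a refinement by a finite closed cover $\{Z_i\}$; the induced surjection $\coprod_i Z_i \twoheadrightarrow X$ of compacta is an effective epimorphism in $\Cat{ConAni}$, so $X$ is the colimit of its Čech nerve there, and the associated descent spectral sequence computes condensed cohomology of $X$ from those of the finite intersections $Z_{i_0} \cap \cdots \cap Z_{i_k}$. The same Čech nerves also control topological sheaf cohomology of $X$ via classical Čech shape theory and proper base change, which together with a hyperdescent argument in the spirit of the proof of \cref{dis_pf} should yield the equivalence. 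I expect the cleanest execution is to iterate this proper descent, ultimately falling back on the trivial case $X = \ast$, where both cohomologies are concentrated in degree zero and equal to $G$.
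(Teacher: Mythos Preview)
Your setup is exactly right and matches the paper: reduce via Yoneda in \(\Pro(\Cat{Ani})\) to showing \(\Map_{\Cat{ConAni}}(X,A)\simeq\Gamma(\Shv^{\Post}(X);\underline{A})\) naturally in~\(A\), and you correctly observe that the left-hand side is the shape of the slice \((\Cat{ConAni})_{/X}\). Where you diverge from the paper is at the comparison itself. The paper invokes \cref{xiajq0} (a result of Haine): there is a geometric morphism \(f\colon(\Cat{ConAni}_{\kappa})_{/X}\to\Shv^{\Post}(X)\) whose left adjoint \(f^{*}\) is fully faithful. A short adjunction chase then gives
\[
  \Map(X,q^{*}E)\simeq\Map(e_{!}{*},q^{*}E)\simeq\Map({*},e^{*}q^{*}E)\simeq\Map(f^{*}{*},f^{*}p^{*}E)\simeq\Map({*},p^{*}E),
\]
the last step using full faithfulness of~\(f^{*}\). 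No descent or cohomology computation is needed.

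Your alternative route has a genuine gap. The plan ``iterate proper descent along finite closed covers, ultimately falling back on \(X={*}\)'' does not terminate: a general compactum is not built from points in finitely many closed-cover steps, and no transfinite version of this reduces to~\({*}\) either. At best, hyperdescent for finite closed covers (which both sides do satisfy) reduces the comparison to a \emph{basis} of the site---extremally disconnected compacta---not to points. But on an extremally disconnected~\(S\) the comparison \(\Map_{\Cat{ConAni}}(S,A)\simeq\Gamma(\Shv^{\Post}(S);\underline{A})\) is still nontrivial; establishing it is essentially the content of \cref{xiajq0} in that base case. So your descent strategy, made precise, would amount to reproving Haine's lemma rather than bypassing it. The reduction to Eilenberg--MacLane coefficients is fine but does not help with this issue.
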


Typical applications are as follows:

\begin{example}\label{xs7llv}
  Consider
  a profinite set \(X=\projlim_{i}X_{i}\)
  as a condensed anima.
  Then \(X^{\Prodis}\)
  is canonically equivalent to \(\projlim_{i}X_{i}\) as a \(\Pro\)-anima
  by \cref{xyrqub}.
  This follows from
  the fact that \(\Shv(X)\) is Postnikov complete
  and the fact that its shape can be computed as
  \(\projlim_{i}\Sh(\Shv(X_{i}))\)
  by~\cite[Proposition~2.11]{Hoyois18}.
\end{example}

\begin{example}\label{cw_dis}
  Let \(X\) be the underlying condensed set
  of a finite CW~complex.
  Then its discretization (as a condensed anima)
  exists and is equivalent to its homotopy type
  by \cref{xyrqub}.
\end{example}

Our proof of \cref{xyrqub}
uses a certain nonabelian generalization
of~\cite[Theorem~3.2]{Condensed},
which was attributed to Dyckhoff there.
The following was proven
by Haine in~\cite[Proposition~0.8]{Haine}
up to size issues,
which we address here:

\begin{lemma}\label{xiajq0}
  Let \(\kappa\) be an infinite strong limit cardinal.
  For \(X\in\Cat{Cpt}_{\kappa}\),
  there is a functorial construction
  of a geometric morphism
  \((\Cat{ConAni}_{\kappa})_{/X}\to\Shv^{\Post}(X)\)
  whose left adjoint is fully faithful.
\end{lemma}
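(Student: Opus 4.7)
The plan is to construct the geometric morphism via a morphism of sites in the spirit of Haine, being careful about size throughout. First, define a functor
\begin{equation*}
\iota_{X}\colon \mathrm{Open}(X) \to (\Cat{ConAni}_{\kappa})_{/X}
\end{equation*}
sending an open $U \subset X$ to the condensed set represented by the locally compact Hausdorff space $U$, equipped with its natural inclusion into~$X$. The essential size check is that the representing condensed set lies in $\Cat{ConAni}_{\kappa}$: since $X$ has weight $<\kappa$ and $\kappa$ is strong limit, so does~$U$, and hence $\Cls{C}(S, U)$ has cardinality $<\kappa$ for any $S \in \Cat{EDis}_{\kappa}$. Functoriality in~$X$ is immediate: a continuous map $p\colon X \to X'$ induces $p^{-1}\colon \mathrm{Open}(X') \to \mathrm{Open}(X)$, compatible with the base-change functor on slice topoi.

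Second, verify that $\iota_{X}$ preserves finite limits (intersections of opens correspond to fiber products of condensed sets over~$X$) and carries open covers to covers in the condensed site. The key observation for covers is that for any $S \in \Cat{EDis}_{\kappa}$ and any continuous map $S \to U$, the image of~$S$ in~$U$ is compact, hence meets only finitely many of the cover opens; extremal disconnectedness of~$S$ then refines the resulting finite open cover of~$S$ into a finite clopen partition. By the universal property of hypersheaf $\infty$-topoi, $\iota_{X}$ promotes to an adjunction
\begin{equation*}
f_{X}^{*}\colon \Shv^{\Post}(X) \rightleftarrows (\Cat{ConAni}_{\kappa})_{/X}\colon f_{X,*},
\end{equation*}
with $f_{X,*}(Y \to X)(U) = \Map_{(\Cat{ConAni}_{\kappa})_{/X}}(\iota_{X}(U), Y)$. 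The output lands in $\Shv^{\Post}(X)$ rather than merely $\Shv(X)$ because the slice $(\Cat{ConAni}_{\kappa})_{/X}$ is hypercomplete and $f_{X,*}$, as a right adjoint, preserves this.

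The main obstacle is to prove that $f_{X}^{*}$ is fully faithful, equivalently that the unit $F \to f_{X,*}f_{X}^{*}F$ is an equivalence for every $F \in \Shv^{\Post}(X)$. On a representable $F = y(U)$, one has $f_{X}^{*}y(U) \simeq \iota_{X}(U)$, and the unit evaluated at $V$ becomes
\begin{equation*}
\Hom_{\mathrm{Open}(X)}(V, U) \to \Map_{(\Cat{ConAni}_{\kappa})_{/X}}(\iota_{X}(V), \iota_{X}(U)),
\end{equation*}
which is an equivalence by fully faithfulness of the embedding of locally compact Hausdorff spaces into condensed sets, combined with the observation that a continuous map $V \to U$ over~$X$ is forced to equal the open inclusion when $V \subset U$ and to be empty otherwise. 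To bootstrap from representables to arbitrary hypersheaves, I would use that $\Shv^{\Post}(X)$ has enough points (the stalks at points of~$X$) and check on stalks, identifying the stalks of $f_{X,*}f_{X}^{*}F$ with those of~$F$ via \cref{xd9foy} applied to a neighborhood basis of each point; an alternative is to reduce to the $n$-truncated case by Postnikov completion and argue by induction using Čech descent for finite surjections in $\Cat{EDis}_{\kappa}$.

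Finally, the functoriality in~$X$ of the whole package is read off from the evident naturality of~$\iota_{X}$ in~$X$, which assembles the individual adjunctions into a coherent functor from $\Cat{Cpt}_{\kappa}$ to an $\infty$-category of geometric morphisms; all constructions remain in $\kappa$-condensed objects by the weight bound on~$X$, which is precisely the size issue the lemma addresses relative to Haine's original formulation.
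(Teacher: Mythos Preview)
Your approach and the paper's differ substantially. The paper does \emph{not} construct a morphism of sites for general~$X$; instead it observes that both $X\mapsto(\Cat{ConAni}_{\kappa})_{/X}$ and $X\mapsto\Shv^{\Post}(X)$ are \emph{hypersheaves on $\Cat{Cpt}_{\kappa}$ valued in $\infty$-topoi}, citing descent for the first and Haine's Theorem~0.5 for the second. A natural transformation between two such hypersheaves is then determined on the basis $\Cat{EDis}_{\kappa}$, so it suffices to build the geometric morphism (with fully faithful left adjoint) for extremally disconnected~$X$, where Haine's Corollary~4.4 applies directly---the only size check being that closed subsets of such~$X$ are $\kappa$-condensed. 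This buys both functoriality and full faithfulness for free: they are local properties and hold on the basis, hence globally by descent.

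Your direct route is in the right spirit but has two genuine gaps. First, the sentence ``the output lands in $\Shv^{\Post}(X)$ \dots\ because the slice is hypercomplete and $f_{X,*}$, as a right adjoint, preserves this'' is backwards. What is needed is that $f_{X}^{*}$ inverts Postnikov equivalences; this holds because $f_{X}^{*}$ commutes with truncation (being left exact and cocontinuous) and the target $(\Cat{ConAni}_{\kappa})_{/X}$ is Postnikov complete (it is replete). The right adjoint plays no role here. Second, the bootstrap from representables to all of $\Shv^{\Post}(X)$ is not an argument: the unit $G\to f_{X,*}f_{X}^{*}G$ is not obviously preserved under the colimits needed to build~$G$ from opens, and neither of your sketched fixes is carried out. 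The stalk argument would require identifying the stalks of $f_{X,*}f_{X}^{*}G$, which you have not done (and \cref{xd9foy} does not apply, since $f_{X}^{*}G$ is not a discrete condensed object in general). The paper's descent reduction sidesteps exactly this difficulty by landing in the extremally disconnected case, where the sheaf theory is essentially combinatorial.
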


\begin{proof}
  We follow the argument by Haine~\cite{Haine};
  we check that it is compatible with our cardinality bound.

  Let \(\Cat{Top}\) be
  the \(\infty\)-category of \(\infty\)-toposes
  with geometric morphisms.
  We first see that
  both \((\Cat{ConAni}_{\kappa})_{/\X}\) and \(\Shv^{\Post}(\X)\)
  are hypersheaves on~\(\Cat{Cpt}_{\kappa}\) valued in \(\Cat{Top}\):
The first one is a hypersheaf
  by descent
  and
  the second one is a hypersheaf
  by~\cite[Theorem~0.5]{Haine}.
  Therefore, it suffices to construct
  such a geometric morphism for \(X\in\Cat{EDis}_{\kappa}\).
  In this case, the argument of~\cite[Corollary~4.4]{Haine}
  works since closed subsets of~\(X\)
  are \(\kappa\)-condensed.
\end{proof}

\begin{proof}[Proof of \cref{xyrqub}]
We consider the commutative square
  \begin{equation*}
    \begin{tikzcd}
      (\Cat{ConAni}_{\kappa})_{/X}\ar[r,"f"]\ar[d,"e"']&
      \Shv^{\Post}(X)\ar[d,"p"]\\
      \Cat{ConAni}_{\kappa}\ar[r,"q"]&
      \Cat{Ani}
    \end{tikzcd}
  \end{equation*}
  in \(\Cat{Top}\),
  where \(f\) is the morphism in \cref{xiajq0},
  \(e\) is the étale morphism map~\(X\),
  and \(p\) and~\(q\) are tautological.
  Let \(E\) be an anima.
  Then
  \begin{equation*}
    \Map(X,q^{*}E)
    \simeq
    \Map(e_{!}{*},q^{*}E)
    \simeq
    \Map({*},e^{*}q^{*}E)
    \simeq
    \Map(f^{*}{*},f^{*}p^{*}E).
  \end{equation*}
  Then by \cref{xiajq0},
  we can identify this as
  \(\Map({*},p^{*}E)\).
  Therefore, the desired claim follows.
\end{proof}

We later need the following strengthened form of \cref{cw_dis}:

\begin{corollary}\label{cw_indis}
  Let \(Y\) be the underlying condensed set
  of a finite CW~complex.
  Then for any discrete condensed anima~\(X\),
  the map
  \([Y^{\dis},X]\to[Y,X]\) of condensed animas
  (here \(Y^{\dis}\) exists by \cref{cw_dis})
  is an equivalence.
\end{corollary}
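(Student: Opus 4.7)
The plan is to reduce the claim, via cartesian closedness and the Pro-discretization functor of \cref{x9u2yo}, to a shape-theoretic computation analogous to \cref{xs7llv}. For each $T\in\Cat{EDis}$, evaluation at $T$ identifies the map $[Y^{\dis},X]\to[Y,X]$ with precomposition by the canonical $T\times Y\to T\times Y^{\dis}$:
\begin{equation*}
  \Map(T\times Y^{\dis},X)\to\Map(T\times Y,X).
\end{equation*}
Since $X$ is discrete with underlying anima $X_{0}$, the defining adjunction of $(\X)^{\Prodis}$ rewrites this as the map induced on $\Map_{\Pro(\Cat{Ani})}(\X,X_{0})$ by $(T\times Y)^{\Prodis}\to(T\times Y^{\dis})^{\Prodis}$. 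So it suffices to prove the latter is an equivalence in $\Pro(\Cat{Ani})$ for every $T\in\Cat{EDis}$.

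I would then compute the two sides. For the target: $(\X)^{\Prodis}$ is a left adjoint (to the cofiltered-limit-preserving extension of $\Cat{Ani}\hookrightarrow\Cat{ConAni}$) and so preserves colimits; writing $Y^{\dis}\in\Cat{Ani}\subset\Cat{ConAni}$ as the colimit of the constant diagram on~$*$ indexed by itself and using cartesian closedness of $\Cat{ConAni}$ to commute $T\times(\X)$ past this colimit, I obtain $(T\times Y^{\dis})^{\Prodis}\simeq T^{\Prodis}\times Y^{\dis}$ in $\Pro(\Cat{Ani})$. For the source: by \cref{xyrqub}, $(T\times Y)^{\Prodis}$ is the shape of $\Shv^{\Post}(T\times Y)$. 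Writing $T=\projlim_{i}T_{i}$ as a cofiltered limit of finite sets, we have $T\times Y=\projlim_{i}(T_{i}\times Y)$ in $\Cat{Cpt}$, and the Hoyois cofiltered-limit formula for shapes (the same input invoked in \cref{xs7llv}) gives $\Sh(\Shv^{\Post}(T\times Y))\simeq\projlim_{i}\Sh(\Shv^{\Post}(T_{i}\times Y))$. Each $T_{i}\times Y$ is a finite disjoint union of copies of $Y$, with shape $T_{i}\times Y^{\dis}$ by \cref{cw_dis} and the coproduct-preservation of shape; taking the limit yields $(T\times Y)^{\Prodis}\simeq T^{\Prodis}\times Y^{\dis}$.

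Finally, these canonical identifications are natural in~$Y$, and the unit $Y\to Y^{\dis}$ induces the identity on $Y^{\dis}$ after $(\X)^{\Prodis}$ (since $Y^{\Prodis}\simeq Y^{\dis}$ by \cref{cw_dis}); so the map $(T\times Y)^{\Prodis}\to(T\times Y^{\dis})^{\Prodis}$ becomes the identity on $T^{\Prodis}\times Y^{\dis}$, completing the argument. The principal obstacle I expect is justifying the shape computation for the compactum $T\times Y$: one needs $\Shv(T\times Y)$ and each $\Shv(T_{i}\times Y)$ to be Postnikov complete and the cofiltered diagram $\{T_{i}\times Y\}_{i}$ to induce a cofiltered limit of $\infty$-toposes to which the Hoyois formula applies, parallel to the argument already used for profinite sets in the proof of \cref{xs7llv}.
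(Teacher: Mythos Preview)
Your proposal is correct and follows the same route as the paper: reduce to showing that $(T\times Y)^{\Prodis}\to(T\times Y^{\dis})^{\Prodis}$ is an equivalence of pro-animas, then identify both sides with $T^{\Prodis}\times Y^{\dis}$ via \cref{xyrqub} and Hoyois's limit formula for shapes. The paper packages the source computation as a standalone product lemma (\cref{xcua8m}) rather than decomposing $T$ as a cofiltered limit of finite sets as you do, and it makes explicit the one step you leave implicit in computing the target---namely that the tensoring of $T^{\Prodis}$ by the anima $Y^{\dis}$ agrees with the product $T^{\Prodis}\times Y^{\dis}$ in $\Pro(\Cat{Ani})$ because $Y^{\dis}$ is a \emph{finite} anima and finite colimits commute with cofiltered limits there.
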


\begin{proof}
  It suffices to prove that
  \((Y\times S)^{\Prodis}\to(Y^{\dis}\times S)^{\Prodis}\)
  is an equivalence of \(\Pro\)-animas.
This follows from \cref{xcua8m} below
and the fact that
  finite colimits commute with cofiltered limits in
  \(\Pro(\Cat{Ani})\).
\end{proof}

\begin{lemma}\label{xcua8m}
  Let \(X\) and~\(X'\) be compacta.
  When each of \(X\), \(X'\), and \(X\times X'\)
  has a Postnikov-complete sheaf \(\infty\)-topos,
  the map
  \((X\times X')^{\Prodis}
  \to X^{\Prodis}\times X'^{\Prodis}\)
  is an equivalence.
\end{lemma}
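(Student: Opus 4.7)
The first step is to apply \cref{xyrqub} to each of $X$, $X'$, and $X \times X'$; this identifies $X^{\Prodis}$, $X'^{\Prodis}$, and $(X \times X')^{\Prodis}$ with the shapes of $\Shv^{\Post}(X)$, $\Shv^{\Post}(X')$, and $\Shv^{\Post}(X \times X')$ respectively, and the canonical map in the statement becomes the natural comparison of shapes induced by the two projections. It therefore suffices to prove that
\[
  \Sh(\Shv^{\Post}(X \times X'))
  \to
  \Sh(\Shv^{\Post}(X)) \times \Sh(\Shv^{\Post}(X'))
\]
is an equivalence in $\Pro(\Cat{Ani})$.

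Since mapping into arbitrary anima detects equivalences in $\Pro(\Cat{Ani})$, and since $\Map_{\Pro(\Cat{Ani})}(\Sh(\cat{Y}), E) = \Gamma(\cat{Y};\underline{E})$ for an $\infty$-topos $\cat{Y}$ (via the terminal geometric morphism), this reduces to producing, for each anima $E$, a natural equivalence
\[
  \Gamma(X \times X';\underline{E})
  \simeq
  \Map_{\Pro(\Cat{Ani})}(\Sh(\Shv^{\Post}(X)) \times \Sh(\Shv^{\Post}(X')), E).
\]
The Postnikov-completeness hypotheses ensure that all three shapes are Postnikov-complete $\Pro$-animas. Since $n$-truncation commutes with finite products in $\Pro(\Cat{Ani})$, and since $\Map(\X,E)$ only sees the $n$-truncation when $E$ is $n$-truncated, one reduces first to the case of $n$-truncated $E$ and then, via induction on the Postnikov tower of $E$ using the associated fiber sequences on mapping anima, to the case $E = K(A,m)$ for an abelian group $A$ and $m \geq 0$.

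In that case, the left-hand side is the (hyper)sheaf cohomology $H^{m}(X \times X';A)$, and the right-hand side computes the cohomology of the product of shapes via a Künneth-type decomposition in terms of $H^{*}(X;A)$ and $H^{*}(X';A)$. The main obstacle is to verify this Künneth-type comparison: it uses the Postnikov-completeness of all three sheaf topoi in an essential way, in that it guarantees that $\Shv^{\Post}(X \times X')$ realises the product $\Shv^{\Post}(X) \times_{\Cat{Top}} \Shv^{\Post}(X')$ in the $\infty$-category of $\infty$-topoi, and that the shape functor distributes over this product. Both statements can fail for general $\infty$-topoi, but they hold in the Postnikov-complete setting; this is precisely where each of the three hypotheses of the lemma is used.
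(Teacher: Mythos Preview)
Your opening move—applying \cref{xyrqub} to rewrite all three \(\Pro\)-discretizations as shapes of \(\Shv^{\Post}\)—is exactly what the paper does. After that the paper's proof is a single citation: \cite[Proposition~2.11]{Hoyois18}, which handles the compatibility of shape with the relevant limits of \(\infty\)-topoi and finishes the argument immediately. Your steps 3--6 attempt instead to reprove this compatibility by hand via truncation, the Postnikov tower of \(E\), and a K\"unneth comparison, but the argument does not close: in step~7 you end up \emph{asserting} the two facts—that \(\Shv^{\Post}(X\times X')\) is the product in \(\Cat{Top}\) and that shape distributes over it—which are precisely what the cohomological reduction was supposed to establish. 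So the detour is at best incomplete (the promised K\"unneth comparison is never carried out) and at worst circular.

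A smaller point: your diagnosis of where the three Postnikov-completeness hypotheses enter is off. The product formula \(\Shv(X\times X')\simeq\Shv(X)\times_{\Cat{Top}}\Shv(X')\) holds for compacta without any such assumption, and Hoyois's shape result is likewise general. The hypotheses are used only to identify \(\Shv^{\Post}\) with \(\Shv\) for each of \(X\), \(X'\), and \(X\times X'\), so that after invoking \cref{xyrqub} one lands in the setting where those general facts apply.
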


\begin{proof}
  This follows from \cref{xyrqub}
  and~\cite[Proposition~2.11]{Hoyois18}.
\end{proof}

We then study solidification:

\begin{proposition}\label{xm65jj}
  Let \(X\) be a compactum
  having a discretization
  such that \(\SS[X^{\dis}]\simeq\SS[X]^{\dis}\)
is a pseudocoherent connective spectrum.
  Then \(\SS[X]\to\SS[X^{\dis}]\) is the solidification map.
\end{proposition}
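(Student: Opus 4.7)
The plan is to prove the proposition in two steps: first verify that $\SS[X^{\dis}]$ is solid, and then show the induced comparison map $\varphi\colon\SS[X]^{\sol}\to\SS[X^{\dis}]$ is an equivalence.

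For the first step, I invoke \cref{xdjyu0}\,(i), which characterizes connective solid spectra by the solidity of their homotopy groups. The pseudocoherence hypothesis means that each $\pi_n\SS[X^{\dis}]$ is a finitely presented discrete abelian group, and since $\ZZ$ is solid and solid abelian groups are closed under finite colimits, every such group is solid. Hence $\SS[X^{\dis}]\in\Cat{SolSp}_{\geq0}$, and the universal property of $(-)^{\sol}$ produces the factorization $\SS[X]\to\SS[X]^{\sol}\xrightarrow{\varphi}\SS[X^{\dis}]$.

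For the second step my plan is to show that $\SS[X]^{\sol}$ is itself discrete. Once this is established, the hypothesis $\SS[X^{\dis}]\simeq\SS[X]^{\dis}$ combined with the solidity of $\SS[X^{\dis}]$ yields mutually inverse maps between $\SS[X]^{\sol}$ and $\SS[X^{\dis}]$: the discreteness of $\SS[X]^{\sol}$ produces a map $\SS[X]^{\dis}\to\SS[X]^{\sol}$ via the universal property of $(-)^{\dis}$, and these factorizations are forced to be inverse by uniqueness. To verify discreteness I would apply the ultrafilter criterion of \cref{dis}, which asks that $\injlim_{\mu(J)=1}\SS[X]^{\sol}(\beta J)\to\SS[X]^{\sol}(*)$ be an equivalence for every ultrafilter $\mu$ on a set $I$. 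Here the pseudocoherence presentation $\SS[X^{\dis}]\simeq\injlim_n E_n$ with $E_n$ connective perfect (hence solid, as finite cell complexes built from $\SS$), together with the explicit computation $\SS[\beta J]^{\sol}\simeq\SS^J$ from \cref{xdjyu0}\,(iv), should allow one to propagate the criterion along $\varphi$ from $\SS[X^{\dis}]$ (where it holds trivially, by discreteness) back to $\SS[X]^{\sol}$.

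The main obstacle is precisely this last step: verifying discreteness of $\SS[X]^{\sol}$ requires control of its value on profinite sets, which is not directly accessible. The pseudocoherence hypothesis is indispensable because it simultaneously makes $\SS[X^{\dis}]$ solid and supplies a finite-type presentation through which one can transfer the ultrafilter information from $\SS[X^{\dis}]$ back to $\SS[X]^{\sol}$.
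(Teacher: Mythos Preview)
Your Step~1 is essentially correct, though you are working harder than necessary: any discrete condensed abelian group is solid (this is part of the basic solid theory), so by the characterization in \cref{xdjyu0} every discrete connective spectrum is solid. Pseudocoherence plays no role here; it is needed only later.

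The genuine gap is in Step~2. Your plan is to show \(\SS[X]^{\sol}\) is discrete by the ultrafilter criterion of \cref{dis}, and to ``transfer'' discreteness from \(\SS[X^{\dis}]\) back along \(\varphi\). But this is circular: the criterion asks you to compute \(\SS[X]^{\sol}(\beta J)\), and you have no access to these values except through \(\varphi\), which you do not yet know is an equivalence. The pseudocoherent filtration \(\injlim_n E_n\) lives on the \emph{target} \(\SS[X^{\dis}]\), not on the source \(\SS[X]^{\sol}\), so it gives you no handle on the latter's sections over \(\beta J\). There is no mechanism in your outline that breaks this loop.

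The paper's proof avoids computing \(\SS[X]^{\sol}\) altogether. It first reduces to \(\ZZ\)-coefficients (legitimate because the solid structure on \(\SS\) is built from that on \(\ZZ\) via nilinvariance), so the task becomes showing \(\ZZ[X]^{\sol}\to\ZZ[X^{\dis}]\) is an equivalence. Both sides are pseudocoherent connective solid \(\ZZ\)-modules: the target by hypothesis, the source because \(X\) admits a hypercover by extremally disconnected sets and each \(\ZZ[S]^{\sol}\simeq\prod\ZZ\) is compact in \(\Cat{Solid}_{\ZZ}\). On such objects the internal dual \([\X,\ZZ]_{\ZZ}\) is conservative, so it suffices to check the map after dualizing. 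But
\[
[\ZZ[X]^{\sol},\ZZ]\simeq[\ZZ[X],\ZZ]\simeq\underline{\Map}(X,\ZZ),
\qquad
[\ZZ[X^{\dis}],\ZZ]\simeq\underline{\Map}(X^{\dis},\ZZ),
\]
and these agree by the very definition of discretization, since \(\ZZ\) is discrete. The key idea you are missing is this dualization move: it converts the free functor \(\ZZ[-]\), which does not interact well with discretization, into the mapping functor \(\underline{\Map}(-,\ZZ)\), which does.
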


\begin{proof}
  It suffices to show that
  \(\ZZ[X]\to\ZZ[X^{\dis}]\) is the solidification morphism.
  Since \(\ZZ[X^{\dis}]\) is discrete,
  we have a map \(\ZZ[X]^{\sol}\to\ZZ[X^{\dis}]\).
  We wish to show that this is an equivalence.
  Since
  the source and target are pseudocoherent connective solid spectra,
we can check this after applying \([\X,\ZZ]_{\ZZ}\),
  but in that case,
  the result follows from the definition of discretization.
\end{proof}

\begin{example}\label{x9jpqc}
  The underlying condensed set of a finite CW~complex
  satisfies the hypothesis of \cref{xm65jj}
  by \cref{cw_dis}.
  This was already covered in~\cite[Example~6.5]{Condensed}.
\end{example}

We have the following new example:

\begin{example}\label{warsaw}
  We have \(\SS[W]^{\sol}\simeq\SS\oplus\Sigma\SS\)
  when \(W\) is the Warsaw circle.
  This follows from
  the fact that \(\Shv(W)\) is Postnikov complete
  and \cref{xyrqub,xm65jj}.
\end{example}

Lastly,
as an aside,
we explain how to treat
infinite-dimensional compacta
using condensed mathematics:

\begin{remark}\label{xvgolx}
  Suppose that we want to treat
  invariants attached to
  the shape of~\(\Shv(X)\)
  instead of \(\Shv^{\Post}(X)\).
  \Cref{xyrqub} shows a limitation
  of the usefulness of condensed mathematics
  in that situation.
  However,
  we can compensate this in some cases.

  We write \(\Cat{Cpt}_{\fin}\) for
  the full subcategory of \(\Cat{Cpt}\)
  spanned by finite-dimensional compacta of countable weight.
  Note that \(\Shv(X)\)
  for \(X\in\Cat{Cpt}_{\fin}\)
  is Postnikov complete.
  Note that
  any functor
  \(\Cat{Cpt}^{\op}\to\cat{D}\)
  to a presentable \(\infty\)-category
  that preserves filtered colimits
is a left Kan extension
  of \(\Cat{Cpt}_{\fin}^{\op}\to\cat{D}\).

  As a concrete example,
  we here construct
  the algebraic-to-topological comparison map
  \(K^{\cn}(\Cls{C}(X;\HH))\to\Gamma(X;\cst{\ksp})\)
  of spectra\footnote{We can change the source to
    \(K(\Cls{C}(X;\HH))\)
    by using the fact that the condensed spectrum~\(K(\HH)\)
    is connective,
    which follows from~\cite[Theorem~A]{k-ros-1}.
  } natural in~\(X\)
  even though \(\Gamma(\Shv^{\Post}(X);\cst{\ksp})\)
  in general differs from the target.
  By the observation above,
  we only need to construct this for \(X\in\Cat{Cpt}_{\fin}\).
  As a condensed spectra,
  we have a map
  \(K^{\cn}(\HH)\to{\ksp}\),
  e.g., by \cref{com_ban}.
  By \cref{xyrqub},
  we obtain the desired map
  as the composite
  \begin{equation*}
    K^{\cn}(\Cls{C}(X;\HH))
    \to
    K^{\cn}(\HH)(X)
    \to
    {\ksp}(X)
    \simeq
    \Gamma(X;\cst{\ksp})
  \end{equation*}
  for \(X\in\Cat{Cpt}_{\fin}\).
\end{remark}

\subsection{Locally contractible spaces}\label{ss:useful}

In \cref{ss:core},
we have addressed the question of when the solidification of \(\SS[X]\) is discrete
for a compactum~\(X\).
The proof of our main discreteness results need the situation where \(X\) is not a compactum.
We prove the version we use in \cref{lslc}
by reducing it to our considerations in \cref{ss:core} for \(X = [0,1]\).

\begin{definition}\label{xv78wj}
  A condensed anima is \emph{contractible}
  if there is a point \(x\colon{*}\to X\)
  and a homotopy \(h\colon X\times[0,1]\to X\)
  that connects~\(x\) and~\(\id_{X}\).
\end{definition}

\begin{proposition}\label{contract}
  Let \(X\) be a contractible condensed anima.
  \begin{enumerate}
    \item\label{i:c_dis}
      The tautological map
      \(X\to{*}\) is the discretization map.
    \item\label{i:c_sol}
      The tautological map
      \(\SS[X]\to\SS[*]\simeq\SS\) is the solidification map.
  \end{enumerate}
\end{proposition}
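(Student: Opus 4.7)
The plan is to treat both parts in parallel, exploiting the homotopy~\(h\) together with the fact that~\([0,1]\) becomes trivial both under the discretization of condensed animas and under the solidification functor applied to~\(\SS[\X]\).

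For~\cref{i:c_dis}, I will show that for any discrete condensed anima~\(Y\), precomposition with \(\pi_X\colon X\to{*}\) induces an equivalence \(Y\simeq\Map({*},Y)\to\Map(X,Y)\). The key preliminary input is that the internal mapping condensed anima \([[0,1],Y]\) is equivalent to~\(Y\) via the constant-map map \(Y\to[[0,1],Y]\): by \cref{cw_dis}, \([0,1]^{\dis}\simeq{*}\), and by \cref{cw_indis} this forces \([[0,1]^{\dis},Y]\to[[0,1],Y]\) to be an equivalence. Tensor-hom adjunction then gives
\[
  \Map(X\times[0,1],Y)
  \simeq\Map(X,[[0,1],Y])
  \simeq\Map(X,Y),
\]
and under this chain both evaluations \(\ev_0^*,\ev_1^*\colon\Map(X\times[0,1],Y)\to\Map(X,Y)\) coincide with the identity, since both \(\ev_t\colon[[0,1],Y]\to Y\) are retractions of the same splitting~\(Y\to[[0,1],Y]\) and hence equal to its inverse. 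Applied to~\(f\circ h\) for \(f\colon X\to Y\), this yields \(f(x)\circ\pi_X\simeq f\), so~\(f\) factors (uniquely up to equivalence) through the point.

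For~\cref{i:c_sol}, the plan is to run exactly the same argument one category higher. By \cref{x9jpqc}, the map \(\SS[[0,1]]\to\SS\) is the solidification map; hence for any solid spectrum~\(E\) the internal mapping condensed spectrum \([\SS[[0,1]],E]\) is equivalent to~\(E\) via the constant-map map \(E\simeq[\SS,E]\to[\SS[[0,1]],E]\). Using \(\SS[X\times[0,1]]\simeq\SS[X]\otimes\SS[[0,1]]\) together with tensor-hom adjunction,
\[
  \Map(\SS[X\times[0,1]],E)
  \simeq\Map(\SS[X],[\SS[[0,1]],E])
  \simeq\Map(\SS[X],E),
\]
and once more both endpoint restrictions agree with the identity by the same retraction argument. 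Applied to \(f\circ\SS[h]\) for \(f\colon\SS[X]\to E\), this gives \(f\circ\SS[x]\circ\SS[\pi_X]\simeq f\), so \(\Map(\SS,E)\to\Map(\SS[X],E)\) is an equivalence for every solid~\(E\)---the universal property characterizing \(\SS[X]\to\SS\) as the solidification map.

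The main obstacle is to verify that the two endpoint restrictions really become equal after the identifications above. Once one knows that the relevant equivalence (\([[0,1],Y]\simeq Y\) or \([\SS[[0,1]],E]\simeq E\)) is implemented by the constant-map splitting, this is immediate: any two sections of an equivalence agree with its inverse. Everything else is formal tensor-hom manipulation, and no auxiliary size or descent considerations are needed beyond those already invoked in \cref{cw_indis,x9jpqc}.
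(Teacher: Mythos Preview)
Your proof is correct and follows essentially the same approach as the paper's: both arguments use the diagram \(X\rightrightarrows X\times[0,1]\xrightarrow{h}X\) and appeal to \cref{cw_indis} (for~\cref{i:c_dis}) and \cref{x9jpqc} (for~\cref{i:c_sol}) to show that the two endpoint inclusions become homotopic after applying \(\Map(\X,Y)\) or \(\SS[\X]^{\sol}\). The paper states this conclusion in a single sentence for each part, whereas you unpack it explicitly via the internal hom identifications \([[0,1],Y]\simeq Y\) and \([\SS[[0,1]],E]\simeq E\); this is a matter of exposition rather than a different strategy.
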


\begin{proof}
We consider
  \begin{equation*}
    X
    \rightrightarrows
    X\times[0,1]
    \xrightarrow{h}
    X,
  \end{equation*}
  where the parallel arrows
  are determined by~\(0\) and~\(1\in[0,1]\).
  Now \cref{i:c_dis} follows from
  the fact that
  \(\Map(\X,E)\)
  for any discrete anima~\(E\)
  carries the parallel arrows to homotopic morphisms,
  which follows from \cref{cw_indis}.
  Similarly, \cref{i:c_sol} follows from
  the fact that
  \(\SS[\X]^{\sol}\) makes them homotopic,
  which follows from \cref{x9jpqc}.
\end{proof}

\begin{example}\label{xffpcl}
  We consider the Hilbert cube \(Q=[0,1]^{\NN}\),
  which is contractible.
  In this case, \(\Shv(Q)\) is not hypercomplete;
  see~\cite[Counterexample~6.5.4.2]{LurieHTT}.
  By \cref{contract},
  it still has a discretization.
  Hence by \cref{xyrqub},
  we see that
  the shape of \(\Shv^{\Post}(Q)\) is trivial.
\end{example}

We then consider
compactly generated Hausdorff spaces
that satisfy a certain local contractibility condition.
We start with a few general remarks:

\begin{remark}\label{x7o7di}
The inclusion
  of the category of compactly generated Hausdorff spaces
  to that of all Hausdorff spaces
  does \emph{not} preserve limits (even binary products).
  In any case,
  if we compose it with
  the functor
  to \(\Cat{ConSet}\),
  it preserves limits.
\end{remark}

\begin{remark}\label{xmyz9h}
  A compactly generated Hausdorff space~\(X\)
  is contractible as a condensed anima
  if and only if it is contractible in the usual sense;
  in spite of \cref{x7o7di},
  the product \(X\times[0,1]\)
  taken in the category of topological spaces
  is also the product in \(\Cat{ConAni}\).
\end{remark}

\begin{theorem}\label{lslc}
  Let \(X\) be a compactly generated Hausdorff space.
  Suppose that
for any open subset~\(U\subset X\)
  and a point~\(x\in U\),
  there is a neighborhood~\(V\)
  such that \(V\to U\) factors through
  a contractible condensed set.
  Then \(X\) admits a discretization
  as a condensed anima
  and \(\SS[X]\to\SS[X^{\dis}]\)
  is the solidification map.
\end{theorem}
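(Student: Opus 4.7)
The plan is to realize $X$ as the geometric realization in $\Cat{ConAni}$ of a hypercover $U_{\bullet}\to X$ whose terms are disjoint unions of contractible condensed animas, and then to compute $X^{\dis}$ and $\SS[X]^{\sol}$ termwise via the left adjoints $(\X)^{\dis}$ and $\SS[\X]^{\sol}$, using \cref{contract,dis_colim}.

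The descent input is that the maps appearing in the hypothesis already form an effective epimorphism to $X$ in an appropriate $\Cat{ConAni}_{\kappa}$. Concretely, for $\kappa$ larger than the weight of~$X$ and any $S\in\Cat{EDis}_{\kappa}$ with a continuous map $f\colon S\to X$, the image $f(S)$ is compact and so covered by finitely many neighborhoods $V_{x_{1}},\dotsc,V_{x_{n}}$ from the hypothesis. By Gleason's theorem, the open cover $\{f^{-1}(V_{x_{i}})\}$ of~$S$ refines to a clopen partition $S=Z_{1}\sqcup\dotsb\sqcup Z_{n}$ with $f(Z_{i})\subset V_{x_{i}}$. Postcomposing with the contractible envelopes $C_{x_{i}}$ from the factorizations $V_{x_{i}}\to C_{x_{i}}\to X$ yields the sought effective epimorphism $\coprod_{x}C_{x}\to X$.

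I would then iterate this: the fiber products appearing in the \v{C}ech nerve of $\coprod C_{x}\to X$ involve open subsets of~$X$, each of which inherits the local hypothesis (this is exactly why the condition is stated for all open $U\subset X$, not only $U=X$), so each successive matching object is again coverable by contractibles via the same argument. The resulting hypercover $U_{\bullet}\to X$ has each $U_{n}$ a disjoint union $\coprod_{\alpha}C_{n,\alpha}$ of contractible condensed animas, and $X\simeq\lvert U_{\bullet}\rvert$ in $\Cat{ConAni}_{\kappa}$ by $\infty$-topos descent. Applying $\SS[\X]^{\sol}$ and using $\SS[C]^{\sol}\simeq\SS$ for each contractible $C$ (from \cref{contract}) gives
\begin{equation*}
  \SS[X]^{\sol}
  \simeq
  \bigl\lvert\SS[U_{\bullet}]^{\sol}\bigr\rvert
  \simeq
  \bigl\lvert\SS[\pi_{0}(U_{\bullet})]\bigr\rvert
  \simeq
  \SS\bigl[\lvert\pi_{0}(U_{\bullet})\rvert\bigr],
\end{equation*}
which is $\SS[X^{\dis}]$ for the discrete anima $X^{\dis}:=\lvert\pi_{0}(U_{\bullet})\rvert$. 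The parallel argument with $(\X)^{\dis}$ in place of $\SS[\X]^{\sol}$ shows that $X^{\dis}$ genuinely is the discretization.

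The main obstacle will be the hypercover construction: the intersections $V_{x_{0}}\cap\dotsb\cap V_{x_{n}}$ appearing in a naive \v{C}ech nerve need not themselves factor through contractibles, so each level must be further refined using the hypothesis applied to these open subsets, producing a proper hypercover rather than a strict \v{C}ech nerve. Controlling this iterative refinement while keeping the construction inside a single $\Cat{ConAni}_{\kappa}$ is the delicate point, though cofinality considerations should suffice because each refinement only introduces objects of bounded weight coming from open subsets of~$X$.
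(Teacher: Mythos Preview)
Your overall strategy---a hypercover with contractible building blocks, followed by termwise application of the left adjoints---matches the paper's argument. The gap is in the iteration step for constructing the hypercover.

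You assert that the matching objects of the \v{C}ech nerve of \(\coprod_{x}C_{x}\to X\) ``involve open subsets of~\(X\)'', so that the local hypothesis can be re-applied. This is not so. The level-\(1\) matching object is \(\coprod_{x,y}C_{x}\times_{X}C_{y}\), computed in \(\Cat{ConAni}\). While there is a canonical map \(V_{x}\cap V_{y}\to C_{x}\times_{X}C_{y}\), it is in general \emph{not} an effective epimorphism: a map \(S\to C_{x}\times_{X}C_{y}\) amounts to maps \(S\to C_{x}\) and \(S\to C_{y}\) agreeing in~\(X\), but the induced map \(S\to X\) need not land in \(V_{x}\cap V_{y}\) at all, since the hypothesis puts no constraint on the image of \(C_{x}\to X\). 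Thus covering \(V_{x}\cap V_{y}\) by contractibles does not yield a cover of \(C_{x}\times_{X}C_{y}\), and the recursion breaks down. Your final paragraph correctly flags that the \v{C}ech nerve must be refined, but the refinement you describe is of the intersections \(V_{x_{0}}\cap\dotsb\cap V_{x_{n}}\), which are not the matching objects of the simplicial object you have built.

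The paper's fix is to keep the hypercover \emph{open}: build an open semisimplicial hypercover \(X_{\bullet}\to X\) whose matching objects are themselves disjoint unions of opens in~\(X\), so the hypothesis applies at every stage. Each \(X_{n}\) is chosen so that its summands factor through contractibles, yielding an interpolating diagram \(X_{n}\to X_{n}'\) with \(X_{n}'\) a coproduct of contractibles and the face maps \(X_{n}\to X_{n-1}\) factoring as \(X_{n}\to X_{n}'\to X_{n-1}\). From \cref{hcov} one has \(X\simeq\lvert X_{\bullet}\rvert\), and the interpolation exhibits \(X\) as a \emph{retract} of \(\lvert X_{\bullet}'\rvert\). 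Since the class of condensed animas for which discretization exists and agrees with \(\SS[\X]^{\sol}\) is closed under colimits and retracts, this finishes the proof. If you rewrite your argument with \(U_{n}\) equal to the open pieces (rather than the contractibles themselves), everything you wrote goes through.
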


\begin{lemma}\label{hcov}
  An open (semisimplicial) hypercover of
  a compactly generated Hausdorff space
  induces a colimit diagram
  in \(\Cat{ConAni}\).
\end{lemma}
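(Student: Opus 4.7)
The plan is to verify the hyperdescent criterion: in the hypercomplete $\infty$-topos $\Cat{ConAni}_{\kappa}$ (for a strong limit $\kappa$ chosen large enough that $X$ and every component of every $U_{n}$ lies in $\Cat{Cpt}_{\kappa}$), an augmented semisimplicial object $U_{\bullet} \to X$ is a colimit diagram whenever each of its matching maps is an effective epimorphism. Hypercompleteness is built into our setup through the $\Shv^{\hyp}$ construction of \cref{ss:con}. The definition of an open (semisimplicial) hypercover asserts exactly that $U_{n} \to M_{n}$ is an open surjection for each $n \geq 0$, where $M_{n}$ is the semisimplicial matching object (with $M_{0} = X$) computed in compactly generated Hausdorff spaces. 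Thus the lemma reduces to a single assertion, applied levelwise.

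The key claim I need to establish is: if $V = \coprod_{i} V_{i} \to Y$ is an open cover of a compactly generated Hausdorff space $Y$ by open subsets $V_{i}$, then the induced map is an effective epimorphism in $\Cat{ConAni}_{\kappa}$. Unwinding the Grothendieck topology of \cref{ss:con}, I must show that for any compactum $S$ and any continuous $f \colon S \to Y$ there exists a \emph{finite} family of compacta $\{S_{j} \to S\}$ with $\coprod S_{j} \to S$ surjective and each $f|_{S_{j}}$ factoring through some $V_{i(j)}$. The open sets $\{f^{-1}(V_{i})\}$ form an open cover of $S$; since $S$ is compact Hausdorff, hence normal and locally compact, a standard shrinking argument yields finitely many compact subsets $S_{j} \subset f^{-1}(V_{i(j)})$ whose interiors still cover $S$. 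This gives the required refinement.

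Applying this claim levelwise to each open cover $U_{n} \to M_{n}$ finishes the argument, provided one further compatibility: the matching object $M_{n}$ formed in compactly generated Hausdorff spaces must agree, as a condensed set, with the matching object formed in $\Cat{ConAni}_{\kappa}$. This reduces to the statement that the forgetful functor to $\Cat{ConSet}$ preserves the relevant finite limits of compactly generated Hausdorff spaces, which by \cref{x7o7di,xmyz9h} is the case for the products and equalizers that enter. The main obstacle is the compactness-and-normality argument of the middle paragraph; everything else is essentially bookkeeping. Once that is in hand, hyperdescent in $\Cat{ConAni}_{\kappa}$ delivers the colimit identification $\operatorname*{colim}_{\bullet} U_{\bullet} \simeq X$.
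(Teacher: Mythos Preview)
Your proposal is correct and follows essentially the same route as the paper: reduce via hypercompleteness to checking that an open cover of a compactly generated Hausdorff space is an effective epimorphism of condensed sets, then pull back along an arbitrary map from a compactum and use compactness plus a shrinking argument to produce a finite closed refinement. You are simply more explicit than the paper about two bookkeeping points—the hyperdescent criterion and the agreement of matching objects via \cref{x7o7di}—which the paper leaves implicit.
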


\begin{proof}
Note that any open subset
  of a compactly generated Hausdorff space
  is again compactly generated Hausdorff.
  Hence,
  it suffices to show
  that for any open cover
  \(\{U_{i}\to X\}_{i\in I}\)
  of a compactly generated Hausdorff space~\(X\)
  is a joint surjection of condensed sets.
  By base change,
  we can assume that \(X\) is a compactum
  and also \(I\) is finite.
  In that case,
  we can shrink each \(U_{i}\)
  to get a finite closed cover.
\end{proof}

\begin{proof}[Proof of \cref{lslc}]
By our assumption,
  we can construct an open hypercover~\(X_{\bullet}\)
  of~\(X\)
  that is interpolated as
  \begin{equation*}
    \begin{tikzcd}
      \cdots\ar[d]&
      X_{1}\ar[d]&
      X_{0}\ar[d]&
      X\rlap,\\
      X_{2}'\ar[ur]\ar[ur,shift left=2]\ar[ur,shift right=2]&
      X_{1}'\ar[ur,shift left]\ar[ur,shift right]&
      X_{0}'\ar[ur]&
      {}
    \end{tikzcd}
  \end{equation*}
  where each \(X_{\bullet}'\)
  is a coproduct of contractible condensed sets.
  By \cref{hcov}, this diagram shows that \(X\) is a retract
  of \(\lvert X_{\bullet}'\rvert\).
  Hence the desired result follows
  from \cref{contract}
  and the fact that
  condensed animas
  satisfying the conditions of \cref{contract}
  are closed under colimits.
\end{proof}

\subsection{Discreteness}\label{ss:yes}

We come back to the original question
when \(K(A)^{\sol}\) is discrete
for a condensed \(\RR\)-algebra~\(A\).
First,
recall that
\(K^{\cn}(A)\) for a connective \(\E_{1}\)-ring~\(A\)
is defined as the group completion
of the core~\((\X)^{\simeq}\)
of the \(\infty\)-category
of finitely generated projective modules \(\Vect(A)\).
We use another model:

\begin{definition}\label{xcvyip}
  For a connective \(\E_{1}\)-ring~\(A\),
  we write \(\Vect'(A)\)
  for the full subanima of
  \(\NN\times{\Vect(A)^{\simeq}}\)
  spanned by
  objects of the form \((r,A^{r})\).
  This has an \(\E_{\infty}\)-monoid structure
  induced from
  those on the factors.
\end{definition}

\begin{lemma}\label{xr12fh}
  For a connective \(\E_{1}\)-ring~\(A\),
  the map
  \(\Vect'(A)\to\Vect(A)^{\simeq}\)
  of \(\E_{\infty}\)-monoids
  is an equivalence after \(\tau_{\geq1}(\X)^{\gp}\).
\end{lemma}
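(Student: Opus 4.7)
The plan is to identify $\tau_{\geq 1}$ of both group completions with $B\GL_{\infty}(A)^{+}$, the basepoint component appearing in Quillen's $+$-construction description of higher algebraic $K$-theory, and then verify that the induced comparison map is the identity. First I would unpack the definition of $\Vect'(A)$ as an anima: it decomposes as $\coprod_{r \geq 0} B\GL_{r}(A)$, where the $r$-th summand is the connected component of $(r, A^{r})$. The $\E_{\infty}$-structure sends $((r, A^{r}), (s, A^{s}))$ to $(r+s, A^{r+s})$, inducing the block-sum maps $B\GL_{r}(A) \times B\GL_{s}(A) \to B\GL_{r+s}(A)$ on components, and the canonical map to $\Vect(A)^{\simeq}$ restricts on each summand to the canonical equivalence $B\GL_{r}(A) \xrightarrow{\sim} B\Aut_{A}(A^{r})$ onto the component of $A^{r}$.

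Next I would apply the group completion theorem to both sides. For $\Vect(A)^{\simeq}$ this is Quillen's classical identification: the basepoint component of $\Omega^{\infty} K^{\cn}(A)$ is $B\GL_{\infty}(A)^{+}$, where $\GL_{\infty}(A) = \injlim_{r} \GL_{r}(A)$ along block-sum with $1$. For $\Vect'(A) = \coprod_{r} B\GL_{r}(A)$ with its block-sum $\E_{\infty}$-structure, the same theorem yields that its group completion is $\ZZ \times B\GL_{\infty}(A)^{+}$; in particular the basepoint component is again $B\GL_{\infty}(A)^{+}$. Crucially, the translation by $[A] \in \pi_{0}\Vect(A)^{\simeq}$ restricts on $\Vect'(A)$ to the translation by $(1, A^{1})$, so the telescopes $\injlim_{r} B\GL_{r}(A)$ used to compute the two basepoint components agree literally; by the naturality of the group completion theorem, the comparison map between these basepoint components is then the identity of $B\GL_{\infty}(A)^{+}$.

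Finally, since $\tau_{\geq 1}$ of a connective spectrum is its $1$-connective cover—whose underlying anima is the basepoint component of $\Omega^{\infty}$—the desired equivalence follows from the identification in the previous step. The hard part will be rigorously verifying that the induced comparison on basepoint components is literally the identity of $B\GL_{\infty}(A)^{+}$, rather than some less obvious self-equivalence; this amounts to a naturality statement for the group completion theorem applied to the inclusion of $\E_{\infty}$-monoids $\Vect'(A) \hookrightarrow \Vect(A)^{\simeq}$, which I would package by computing both group completions via the shared stabilization along $B\GL_{r}(A) \to B\GL_{r+1}(A)$.
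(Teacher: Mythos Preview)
Your argument is correct and rests on the same algebraic input as the paper's---that every finitely generated projective $A$-module is a retract of a finite free one, so $[A]$ is cofinal in $\pi_{0}\Vect(A)^{\simeq}$---but the two proofs package this differently. The paper never names $B\GL_{\infty}(A)^{+}$ or a telescope; instead it shows directly that the square
\[
\begin{tikzcd}
\Vect'(A)^{\gp}\ar[r]\ar[d] & (\Vect(A)^{\simeq})^{\gp}\ar[d]\\
\pi_{0}(\Vect'(A)^{\gp})\ar[r] & \pi_{0}((\Vect(A)^{\simeq})^{\gp})
\end{tikzcd}
\]
is cartesian in animas, checking this on integral homology via the localization formula $H_{*}(M^{\gp};\ZZ)\simeq H_{*}(M;\ZZ)[\pi_{0}(M)^{-1}]$: cofinality of $[A]$ means inverting all of $\pi_{0}\Vect(A)^{\simeq}$ is the same as inverting just the image of $\pi_{0}\Vect'(A)$. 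Your route through telescopes and the $+$-construction is more concrete and perhaps more familiar; its cost is exactly the naturality verification you flag at the end, which the paper's homological argument sidesteps because the localization formula is manifestly functorial in $M$. One small point: your appeal to ``Quillen's classical identification'' for $\Vect(A)^{\simeq}$ is itself the place where cofinality is used, so in a self-contained write-up you should state the retract fact explicitly rather than leave it inside that black box.
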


\begin{proof}
To simplify the notation,
  we write \(Y\to X\) for the map.
  We claim that
  \begin{equation*}
    \begin{tikzcd}
      Y^{\gp}\ar[r]\ar[d]&
      X^{\gp}\ar[d]\\
      \pi_{0}(Y^{\gp})\ar[r]&
      \pi_{0}(X^{\gp})
    \end{tikzcd}
  \end{equation*}
  is a pullback square in \(\Cat{Ani}\).
  All objects are \(\ZZ\)-local here
  so it suffices to show that
  \begin{equation*}
    H_{*}(Y^{\gp};\ZZ)
    \to
    H_{*}(\pi_{0}(Y^{\gp})\times_{\pi_{0}(X^{\gp})}X^{\gp};\ZZ)
  \end{equation*}
  is an isomorphism for \({*}\geq0\).
  Note that
  as a graded \(\ZZ\)-algebra,
  \(H_{*}(Y^{\gp};\ZZ)\)
  is obtained from \(H_{*}(Y;\ZZ)\)
  by inverting all \(y\in Y\).
  The same holds for~\(X\),
  but in this case,
  we only need to invert \(x\in X\)
  that comes from~\(Y\)
  since every finitely generated projective module
  is a retract of a finite free module.
  This description proves the desired claim.
\end{proof}

\begin{proposition}\label{xufs5h}
  Let \(A\) be a static condensed ring
  such that \(\SS[\GL_{r}(A)^{\times}]^{\sol}\)
  is discrete for \(r\geq0\).
  Then \((\tau_{\geq1}K(A))^{\sol}\) is discrete.
\end{proposition}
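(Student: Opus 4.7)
The plan is to replace $K^{\cn}(A)$ by the simpler $\Vect'(A)^{\gp}$ via \cref{xr12fh}, resolve the latter using the Breen--Deligne resolution together with the bar construction, and reduce the discreteness to the hypothesis via the symmetric monoidality of solidification. Since $A$ is static, $\Vect'(A)$ decomposes in condensed anima as $\coprod_{r\geq0}B\GL_{r}(A)$, where $B\GL_{r}(A)$ denotes the classifying space of the condensed group $\GL_{r}(A)=\Aut_{\Vect(A)}(A^{r})$ computed via the bar construction in $\Cat{ConAni}$. By \cref{xr12fh}, the canonical map $\Vect'(A)^{\gp}\to K^{\cn}(A)$ is a $\tau_{\geq1}$-equivalence; since solidification is exact and preserves connectivity (as in \cref{xdjyu0}), the fiber stays coconnective after solidification, and the equivalence persists on $\tau_{\geq1}$. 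It therefore suffices to prove that $(\Vect'(A)^{\gp})^{\sol}$ is discrete.

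First I would apply the condensed version of the Breen--Deligne resolution (\cref{bd} and \cref{e7d4d403fa}) to write $\Vect'(A)^{\gp}$ as the geometric realization of a simplicial condensed spectrum whose terms are finite direct sums of $\SS[\Vect'(A)^{k}]$ for various $k\geq0$. Since solidification commutes with colimits and discrete condensed spectra form a full subcategory closed under colimits (because $\Cat{Sp}=\Cat{ConSp}_{\aleph_{0}}\hookrightarrow\Cat{ConSp}$ is colimit-preserving by \cref{x9djsw}), it will suffice to show $\SS[\Vect'(A)^{k}]^{\sol}$ is discrete. Decomposing $\Vect'(A)^{k}=\coprod_{(r_{1},\dotsc,r_{k})}\prod_{i}B\GL_{r_{i}}(A)$ and using colimit-closure once more reduces the question to a single product $\prod_{i}B\GL_{r_{i}}(A)=BG$ with $G=\prod_{i}\GL_{r_{i}}(A)$. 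Writing $\SS[BG]\simeq\lvert\SS[G^{\bullet}]\rvert$ via the bar construction, a final colimit-closure argument will reduce the problem to showing $\SS[G^{n}]^{\sol}$ is discrete for each $n\geq0$.

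For the final step, $\SS[G^{n}]\simeq\SS[\GL_{r_{1}}(A)^{n}]\otimes\dotsb\otimes\SS[\GL_{r_{k}}(A)^{n}]$ in $\Cat{ConSp}$, and by the symmetric monoidality of solidification this becomes
\begin{equation*}
  \SS[G^{n}]^{\sol}\simeq\bigl(\SS[\GL_{r_{1}}(A)]^{\sol}\bigr)^{\otimes^{\sol}n}\otimes^{\sol}\dotsb\otimes^{\sol}\bigl(\SS[\GL_{r_{k}}(A)]^{\sol}\bigr)^{\otimes^{\sol}n}.
\end{equation*}
Each factor is discrete by hypothesis, and since the embedding $\Cat{Sp}\hookrightarrow\Cat{SolSp}$ is symmetric monoidal, discrete solid spectra are closed under $\otimes^{\sol}$, giving the claim. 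The main obstacle will be justifying this last closure property: showing that the solid tensor product of discrete solid spectra is again discrete requires a careful check of the compatibility between the discrete embedding and both the condensed and solid tensor products. A secondary concern is the compatibility of solidification with the Postnikov truncation used in the reduction, but this should follow from the exactness of solidification and the fact that the fiber in question is concentrated in degree zero.
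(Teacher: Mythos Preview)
Your proof is correct and follows the paper's argument essentially verbatim: reduce via \cref{xr12fh} to $(\Vect'(A)^{\gp})^{\sol}$, apply the Breen--Deligne resolution and the bar construction, and conclude from the hypothesis using that discrete solid spectra are closed under colimits and solid tensor products. Your flagged concerns are not obstacles: discrete spectra are solid (being colimits of~$\SS$) and the embedding $\Cat{Sp}\hookrightarrow\Cat{ConSp}$ is symmetric monoidal (constant presheaves, with tensor computed objectwise on $\Cat{EDis}$), so the solid tensor product of discrete spectra agrees with their ordinary smash product and is discrete; and the Postnikov step is immediate since $\pi_{0}\Vect'(A)^{\gp}\simeq\ZZ$ is already discrete, so the fiber of $\Vect'(A)^{\gp}\to\pi_{0}\Vect'(A)^{\gp}$ solidifies to a fiber of discrete spectra.
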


\begin{proof}
By \cref{xr12fh},
  it suffices to prove that
  \((\Vect'(A)^{\gp})^{\sol}\) is discrete.
  By \cref{bd},
  it suffices to show that
  \(\SS[\Vect'(A)^{n}]^{\sol}\)
  is discrete
  for \(n\geq0\).
  Since \(\Vect'(A)\simeq\coprod_{r\geq0}B{\GL_{r}(A)}\),
  it is equivalent to showing that
  \(\SS[B{\GL_{r_{1}}(A)}\times\dotsb\times B{\GL_{r_{n}}(A)}]^{\sol}\)
  is discrete
  for \(r_{1}\), \dots, \(r_{n}\geq0\).
  Therefore,
  it suffices to show that
  \(\SS[B{\GL_{r}(A)}]^{\sol}\) is discrete for \(r\geq0\),
  which follows from the bar resolution
  and our assumption.
\end{proof}

We then deduce our main discreteness results
from this:

\begin{proof}[Proof of \cref{dis_ban}]
  We later see that
  \(K_{0}(A)\) is discrete (before solidification);
  see \cref{kap_dis}.
  Therefore,
  it suffices to show that
  \((\tau_{\geq1}K(A))^{\sol}\) is discrete.
  We prove this by checking the assumption of \cref{xufs5h}.
  Since the condition on~\(A\) is closed under taking \(\Mat_{n}\),
  we need to show that
  \(\SS[A^{\times}]^{\sol}\) is discrete,
  but \(A^{\times}\)
  satisfies the condition of \cref{lslc} in this case.
\end{proof}

\begin{remark}\label{xv4il4}
In \cref{dis_ban}, if we moreover assume that
  \(A\) is locally convex,
  we could also rely on Milnor's theorem~\cite{Milnor59},
  which implies that any open subspaces of
  a real locally convex vector space
  is homotopy equivalent 
  to a CW~complex.
\end{remark}

\begin{proof}[Proof of \cref{dis_alg}]
  By \cref{tr}, we can assume that \(A\) is static.

  We first prove that \((\tau_{\geq1}K(A))^{\sol}\) is discrete
  using \cref{xufs5h}.
  Since \(\Mat_{n}(A)\) again satisfies the same condition,
  we wish to show that
  \(\SS[A^{\times}]^{\sol}\) is discrete.
  We write \(A^{\delta}\) for the discrete \(\RR^{\delta}\)-algebra
  before the base change.
  By writing \(A^{\delta}\) as a filtered colimit
  of finitely presented \(\RR^{\delta}\)-algebras,
  we can assume that \(A^{\delta}\) is of finite presentation.
  We have a surjection
  \(\RR^{\delta}\langle T_{1},\ldots,T_{n}\rangle\to A^{\delta}\).
  The algebra
  \(\RR^{\delta}\langle T_{1},\ldots,T_{n}\rangle\)
  has a filtration by total degree
  and we get a filtration on~\(A^{\delta}\)
  from that
  by taking the image.
  Since \(A^{\times}=\injlim_{d\geq0}(\fil^{d}A)\cap A^{\times}\),
  it suffices to show that
  \(\SS[(\fil^{d}A)\cap A^{\times}]^{\sol}\)
  is discrete.
  We here consider the pullback diagram
  \begin{equation*}
    \begin{tikzcd}[column sep=huge]
      (\fil^{d}A)\cap A^{\times}\ar[r,hook,"{a\mapsto(a,a^{-1})}"]\ar[d]&
      \fil^{d}A\times\fil^{d}A\ar[d,"\X\cdot\X"]\\
      {*}\ar[r,"1",hook]&
      \fil^{2d}A\rlap.
    \end{tikzcd}
  \end{equation*}
  Since the right vertical map is polynomial,
  \((\fil^{d}A)\cap A^{\times}\) is a real algebraic variety.
  Hence
  the desired discreteness follows
  from \cref{lslc}.

  Then we proceed to prove that \(K(A)^{\sol}\) is discrete.
  We argue by descending induction on~\(n\)
  to show that
  \((\tau_{\geq n}K(A))^{\sol}\) is discrete
  for any~\(A\) satisfying the condition.
  We have shown that this is the case for \(n=1\).
  We assume the result for~\(n\).
  Since the fundamental theorem of \(K\)-theory says that
  \(K_{n-1}(A)\)
  is a direct summand
  of the cokernel
  of the map \(K_{n}(A[T])\oplus K_{n}(A[T^{-1}])
  \to K_{n}(A[T,T^{-1}])\),
  the result remains true for \(n-1\).
\end{proof}

\section{Comparison with semitopological \texorpdfstring{\(K\)}{K}-theory}\label{s:stop}

For an algebraic variety~\(X\) over~\(\CC\),
we can consider
its \emph{semitopological \(K\)-theory}
\begin{equation}
  \label{e:9wyu5}
  K^{\st}(X)
  =
  \lvert K(X\times_{\Spec\CC}\Spec\Cls{C}(\Delta^{\bullet};\CC))\rvert.
\end{equation}
We refer the reader to~\cite{FriedlanderWalker05}
for a survey of this invariant\footnote{More precisely,
  they consider
  its connective version~\(K^{\cn,\st}\).
}.
Blanc~\cite{Blanc16} introduced its noncommutative variant
(in a slightly different manner; see \cref{xzpvdw}).
Here,
first in \cref{ss:hi},
we study the relationship
between
solidification and
the homotopification process
as in the right-hand side of \cref{e:9wyu5}.
In \cref{ss:stop_cn},
we prove the real version of \cref{com_cat} using that.

\begin{remark}\label{xzpvdw}
  Blanc~\cite{Blanc16} did not define the semitopological \(K\)-theory
  of a \(\CC\)-linear \(\infty\)-category~\(\cat{C}\) in this way,
  but the argument in the proof of~\cite[Theorem~2.3]{AntieauHeller18}
  shows that his invariant coincides with the geometric realization of
  \(K(\cat{C}\otimes_{\CC}\Cls{C}(\Delta^{\bullet};\CC))\).
\end{remark}

\subsection{Recollection: homotopification}\label{ss:hi}

One major caveat when considering~\(K^{\st}\) in this context
is that
\(K(\Cls{C}([0,1];\CC))\) for example is \emph{not} the value
of the condensed spectrum \(K(\CC)\) at \([0,1]\);
hence we need to work with general presheaves,
i.e.,
functors
\(F\colon\Cat{Cpt}^{\op}\to\Cat{Sp}\)
that are not condensed.
Still,
under mild conditions,
we can sheafify this to get
the corresponding condensed spectrum:

\begin{lemma}\label{xpog9z}
  Let \(F\colon\Cat{Cpt}^{\op}\to\cat{D}\)
  be an accessible functor
  to a presentable \(\infty\)-category.
  Then there is a universal condensed object
  \(F'\colon\Cat{Cpt}^{\op}\to\cat{D}\)
  (see \cref{xj8jo5}) under~\(F\).
\end{lemma}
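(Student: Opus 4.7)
The plan is to invoke accessibility to cut down to a small site, hypersheafify there, and transport back using the explicit description of the transition functors between cardinal-restricted condensed categories provided by \cref{xuf9uh}.

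Since $F$ is accessible and $\cat{D}$ is presentable, I would choose a regular cardinal $\kappa\geq\aleph_{1}$ large enough that (a) $F$ coincides with the left Kan extension of $F_{\kappa}:=F|_{\Cat{Cpt}_{\kappa}^{\op}}$ along $\Cat{Cpt}_{\kappa}\hookrightarrow\Cat{Cpt}$, and (b) $\aleph_{1}$-small limits in $\cat{D}$ commute with $\kappa$-filtered colimits, so that \cref{xuf9uh} applies. On the small site $\Cat{Cpt}_{\kappa}$ the hypersheafification functor
\[
  L_{\kappa}\colon\Fun(\Cat{Cpt}_{\kappa}^{\op},\cat{D})\to\Shv^{\hyp}(\Cat{Cpt}_{\kappa};\cat{D})\simeq\Cat{ConAni}_{\kappa}\otimes\cat{D}
\]
exists as a left adjoint by standard presentable sheafification. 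Set $F':=L_{\kappa}F_{\kappa}$, viewed as a condensed object via the fully faithful inclusion of \cref{xuf9uh}; the unit $F_{\kappa}\to L_{\kappa}F_{\kappa}$ then left Kan extends to a canonical morphism $F\to F'$.

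To verify universality, I would take an arbitrary condensed target $G$ together with a map $F\to G$. By \cref{xj8jo5}, $G$ is an accessible hypersheaf on $\Cat{Cpt}^{\op}$, so after enlarging $\kappa$ to some larger regular cardinal $\lambda$ satisfying conditions (a) and (b) for both $F$ and $G$, we may assume that both $F$ and $G$ are left Kan extended from their restrictions to $\Cat{Cpt}_{\lambda}^{\op}$ and that $G|_{\Cat{Cpt}_{\lambda}^{\op}}$ is already a hypersheaf. The restricted map $F_{\lambda}\to G|_{\Cat{Cpt}_{\lambda}^{\op}}$ then factors uniquely through $L_{\lambda}F_{\lambda}$ by the universal property of hypersheafification, and \cref{xuf9uh} identifies $L_{\lambda}F_{\lambda}$ with the image of $F'=L_{\kappa}F_{\kappa}$ under the fully faithful transition $\Cat{ConAni}_{\kappa}\otimes\cat{D}\hookrightarrow\Cat{ConAni}_{\lambda}\otimes\cat{D}$, yielding the desired unique factorization $F\to F'\to G$.

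The main obstacle is the cardinal bookkeeping needed to ensure that $F'$ is independent of the auxiliary choice of $\kappa$ and that the universal property survives testing against condensed targets of arbitrary size. The essential input making this possible is \cref{xuf9uh}: its identification of transition functors $\Cat{ConAni}_{\kappa}\otimes\cat{D}\hookrightarrow\Cat{ConAni}_{\lambda}\otimes\cat{D}$ with honest left Kan extensions guarantees that hypersheafification performed at one cardinal level remains a hypersheaf at any larger level, so the $\kappa$-local construction assembles coherently into a left adjoint at $F$ to the inclusion of condensed objects into accessible presheaves.
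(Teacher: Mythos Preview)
Your proof is correct and takes essentially the same approach as the paper: reduce to a bounded site, sheafify there, and use the explicit left-Kan-extension description of the transition functors to verify universality. The paper makes the minor simplification of restricting to $\Cat{EDis}_{\kappa}$ for a suitable strong limit cardinal $\kappa$ and invoking \cref{x9djsw} rather than \cref{xuf9uh}; this is marginally cleaner because the sheaf condition on $\Cat{EDis}_{\kappa}$ is just preservation of finite products (so ``sheafification'' is trivial) and \cref{x9djsw} states full faithfulness explicitly, whereas your route leaves the identification $L_{\lambda}F_{\lambda}\simeq\operatorname{Lan}(L_{\kappa}F_{\kappa})$ implicit.
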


\begin{proof}
We take an infinite strong limit cardinal~\(\kappa\)
  such that \(F\) preserves \(\cf(\kappa)\)-filtered colimits
  and
  finite products commute with \(\cf(\kappa)\)-filtered colimits
  in~\(\cat{D}\).
  We restrict~\(F\) to \(\Cat{EDis}_{\kappa}\)
  and sheafify it there
  to get a \(\kappa\)-condensed object,
  which is the desired one
  by \cref{x9djsw}.
\end{proof}

\begin{definition}\label{xigu8f}
  In the situation of \cref{xpog9z},
  we write \(F^{\con}\) for
  the universal condensed object~\(F'\)
  and call it the \emph{condensation} of~\(F\).
\end{definition}

We then talk about
homotopy invariance and prove the well-known fact:

\begin{definition}\label{xg4xv2}
  An accessible functor \(F\colon\Cat{Cpt}^{\op}\to\cat{D}\)
  to a presentable \(\infty\)-category
  is called \([0,1]\)-homotopy invariant
  if the tautological map
  \(F(X)\to F(X\times[0,1])\) is an equivalence
  for \(X\in\Cat{Cpt}\).
\end{definition}

\begin{example}\label{xvqpc9}
  A solid spectrum regarded as
  an accessible functor
  \(F\colon\Cat{Cpt}^{\op}\to\Cat{Sp}\)
  by \cref{xj8jo5}
  is \([0,1]\)-homotopy invariant:
  This follows from
  the fact that \(\SS[[0,1]]\to\SS\)
  solidifies to an equivalence,
  which follows from \cref{x9jpqc}.
\end{example}

\begin{proposition}\label{xwyhup}
  For an accessible functor \(F\colon\Cat{Cpt}^{\op}\to\cat{D}\)
  to a presentable \(\infty\)-category,
  there is a universal \([0,1]\)-homotopy invariant accessible functor
  under~\(F\),
  which we write \(L_{[0,1]}F\).
  Concretely,
  \(L_{[0,1]}F\)
  is given as the geometric realization
  of \(F(\X\times\Delta^{\bullet})\).
\end{proposition}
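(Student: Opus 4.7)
The strategy is to first obtain the localization abstractly and then match it with the explicit formula. For existence, fix an infinite strong limit cardinal \(\kappa\) such that \(F\) is determined by its restriction to \(\Cat{Cpt}_{\kappa}\), and work in the presentable \(\infty\)-category \(\Fun(\Cat{Cpt}_{\kappa}^{\op},\cat{D})\). The full subcategory of \([0,1]\)-homotopy invariant functors is the orthogonal complement of the small set of maps \(\{y(X\times[0,1])\to y(X)\}_{X\in\Cat{Cpt}_{\kappa}}\), so by standard accessible-localization theory this subcategory is reflective; this produces \(L_{[0,1]}\). Passing to a colimit as \(\kappa\) grows gives the statement for accessible functors.

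To identify \(L_{[0,1]}F\) with \(RF(X):=\lvert F(X\times\Delta^{\bullet})\rvert\), I would establish:
\begin{conenum}
  \item\label{i:ssub1} If \(G\) is \([0,1]\)-invariant, then the canonical map \(G\to RG\) is an equivalence.
  \item\label{i:ssub2} For any accessible \(F\), the functor \(RF\) is \([0,1]\)-invariant.
\end{conenum}
These suffice: \(L_{[0,1]}\), being a left adjoint, commutes with the colimit defining \(R\) (and with the pre-composition \(F\mapsto F(\X\times\Delta^{n})\), which preserves \([0,1]\)-invariance), so \cref{i:ssub1} applied to \(L_{[0,1]}F\) yields \(L_{[0,1]}F\simeq R(L_{[0,1]}F)\simeq L_{[0,1]}(RF)\), while \cref{i:ssub2} gives \(L_{[0,1]}(RF)\simeq RF\); combining shows \(RF\simeq L_{[0,1]}F\).

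For \cref{i:ssub1}, each cosimplicial face map \(\Delta^{m}\to\Delta^{n}\) is a \([0,1]\)-homotopy equivalence, since both simplices deformation-retract to a common vertex via the straight-line homotopy. Hence \(G\) carries every face map to an equivalence, the simplicial object \(n\mapsto G(X\times\Delta^{n})\) has all face maps invertible, and its geometric realization collapses to \(G(X\times\Delta^{0})=G(X)\).

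The main obstacle is \cref{i:ssub2}: we must show the projection \([0,1]\to{*}\) induces an equivalence \(\lvert F(X\times\Delta^{\bullet})\rvert\to\lvert F(X\times[0,1]\times\Delta^{\bullet})\rvert\). I would adapt the simplicial-homotopy argument from \(\mathbb{A}^{1}\)-homotopy theory: the multiplication \(\mu\colon [0,1]\times[0,1]\to[0,1]\), \((s,t)\mapsto st\), together with the homeomorphism \([0,1]\cong\lvert\Delta^{1}\rvert\) valid as compacta, provides on the bisimplicial object \(F(X\times[0,1]\times\Delta^{\bullet}\times\Delta^{\bullet})\) a simplicial homotopy connecting the identity and the composite factoring through \(\{0\}\subset[0,1]\). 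The Eilenberg--Zilber diagonal lemma for bisimplicial objects in a presentable \(\infty\)-category then lets this homotopy descend to the desired equivalence on realizations.
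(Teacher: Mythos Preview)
Your proposal is correct but takes a longer route than the paper, and two steps are left underspecified. The paper does not build the localization abstractly at all: it simply defines \(L_{[0,1]}F\) as the geometric realization and verifies your two conditions directly; these already give the universal property. Your detour through an abstract \(L_{[0,1]}\) and the interchange \(R(L_{[0,1]}F)\simeq L_{[0,1]}(RF)\) can be made to work, but that interchange needs a justification you do not supply (it amounts to checking that the right adjoint \(G\mapsto G((\X)^{\Delta^n})\) of precomposition preserves \([0,1]\)-invariance, which in turn uses that \([0,1]^{\Delta^n}\) is contractible).

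For your second condition, the paper's argument is cleaner. It isolates a general lemma: given \(m\colon I\times I\to I\) with \(m(0,\X)\simeq0\) and \(m(1,\X)\simeq\id\), a functor \(G\) is \(I\)-invariant as soon as the two maps \(G(C\times I)\rightrightarrows G(C)\) induced by \(0\) and \(1\) are homotopic. Applied to \(G=RF\) with \(I=[0,1]\), this reduces the problem to exhibiting a cosimplicial homotopy between \(0,1\colon\Delta^\bullet\to\Delta^\bullet\times[0,1]\), which is just the prism decomposition of \(\Delta^n\times\Delta^1\). Your argument via the multiplication \(\mu\) is the same idea at heart, but the bisimplicial object \(F(X\times[0,1]\times\Delta^\bullet\times\Delta^\bullet)\) and the Eilenberg--Zilber lemma are unnecessary: the required simplicial homotopy already lives on the single simplicial object \(n\mapsto F(X\times[0,1]\times\Delta^n)\), with no second simplicial direction or diagonal argument needed.
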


\begin{lemma}\label{special}
  Let \(I\) be an object of an \(\infty\)-category~\(\cat{C}\)
  with finite products
  and \(0\) and \(1\colon{*}\to I\)
  are morphisms.
  Suppose that
  there is a map
  \(m\colon I\times I\to I\)
  and homotopies
  \(m(0,\X)\simeq0\)
  and \(m(1,\X)\simeq{\id}\).
  For a functor \(F\colon\cat{C}^{\op}\to\cat{D}\),
  the following conditions are equivalent:
  \begin{conenum}
    \item\label{i:sha}
      The tautological map
      \(F(C)\to F(C\times I)\)
      is an equivalence for any~\(C\in\cat{C}\).
    \item\label{i:rei}
      For any object \(C\in\cat{C}\),
      the maps
      \(F(C\times I)\rightrightarrows F(C)\)
      induced from~\(0\) and~\(1\)
      are homotopic.
  \end{conenum}
\end{lemma}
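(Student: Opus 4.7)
The plan is to treat the two implications separately and to view this as a standard argument about ``interval objects with multiplication,'' carried out entirely by functoriality once the right diagram is in place.

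For \cref{i:sha}\(\Rightarrow\)\cref{i:rei}, I first note that the two sections \(0, 1\colon{*}\to I\) both satisfy \(\pi\circ 0_{C}\simeq\id_{C}\simeq\pi\circ 1_{C}\), where \(0_{C}=(\id_{C},0)\) and \(1_{C}=(\id_{C},1)\) and \(\pi\colon C\times I\to C\) is the projection. Applying~\(F\) turns these into homotopies \(0_{C}^{*}\circ\pi^{*}\simeq\id\simeq 1_{C}^{*}\circ\pi^{*}\) on \(F(C)\). Under \cref{i:sha}, \(\pi^{*}\) is an equivalence, so \(0_{C}^{*}\) and \(1_{C}^{*}\) are both equal to its inverse, hence homotopic.

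For the substantive direction \cref{i:rei}\(\Rightarrow\)\cref{i:sha}, the key is to build, from the multiplication \(m\) and its boundary homotopies, a concrete ``contracting homotopy'' in \(\cat{C}\). I define
\begin{equation*}
  H\colon C\times I\times I\to C\times I,
  \qquad
  H=(\pi_{C},m\circ(\pi_{3},\pi_{2})),
\end{equation*}
i.e.\ informally \((c,y,t)\mapsto(c,m(t,y))\). The hypotheses \(m(0,\X)\simeq 0\) and \(m(1,\X)\simeq\id_{I}\) translate into homotopies
\begin{equation*}
  H\circ(\id_{C\times I}\times 0)\simeq 0_{C}\circ\pi,
  \qquad
  H\circ(\id_{C\times I}\times 1)\simeq\id_{C\times I}
\end{equation*}
in \(\cat{C}\). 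Applying \(F\) yields, on \(F(C\times I)\),
\begin{equation*}
  \pi^{*}\circ 0_{C}^{*}\simeq(\id_{C\times I}\times 0)^{*}\circ H^{*},
  \qquad
  \id_{F(C\times I)}\simeq(\id_{C\times I}\times 1)^{*}\circ H^{*}.
\end{equation*}
Now I apply \cref{i:rei} to the object \(C\times I\) in place of~\(C\): the two pullbacks \((\id_{C\times I}\times 0)^{*}\) and \((\id_{C\times I}\times 1)^{*}\) from \(F(C\times I\times I)\) to \(F(C\times I)\) are homotopic. Postcomposing with \(H^{*}\) gives \(\pi^{*}\circ 0_{C}^{*}\simeq\id_{F(C\times I)}\). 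Combined with \(0_{C}^{*}\circ\pi^{*}\simeq\id_{F(C)}\) from the paragraph above, this exhibits \(\pi^{*}\) as an equivalence with inverse \(0_{C}^{*}\), which is exactly \cref{i:sha}.

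The only delicate point is the bookkeeping for the \(\infty\)-categorical coherences: the two equations above should be viewed as homotopies in the relevant mapping anima, and one must check that the hypothesis \cref{i:rei} is being invoked with the correct object (namely \(C\times I\), not \(C\)). Once the diagram is drawn, however, there is no further obstacle, since every step is just functoriality of~\(F\) applied to a homotopy commutative square in~\(\cat{C}\).
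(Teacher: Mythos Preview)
Your argument is correct and is essentially the paper's own proof: both build the map \(H=\id_{C}\times m\) (with the appropriate swap of the two \(I\)-factors), observe that precomposing with the two sections \(0,1\) of the outer \(I\) yields \(0_{C}\circ\pi\) and \(\id_{C\times I}\), and then invoke \cref{i:rei} for the object \(C\times I\) to conclude that \(\pi^{*}\) has \(0_{C}^{*}\) as two-sided inverse. Your explicit tracking of the argument order in \(m\) is in fact slightly cleaner than the paper's diagram.
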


\begin{proof}
  The implication \(\text{\cref{i:sha}}\Rightarrow\text{\cref{i:rei}}\)
  is clear.
  We assume~\cref{i:rei}
  and wish to prove~\cref{i:sha}.
  We write \(p\colon I\to{*}\)
  for the tautological map.
  For \(C\in\cat{C}\),
  the diagram
  \begin{equation*}
    \begin{tikzcd}[row sep=small,column sep=huge]
      {}&
      C\ar[dr,bend left=15,"{\id}_{C}\times0"]&
      {}\\
      C\times I
      \ar[ur,bend left=15,"{\id}_{C}\times p"]
      \ar[r,shift left,"{\id}_{C\times I}\times0"]
      \ar[r,shift right,"{\id}_{C\times I}\times1"']
      \ar[rr,bend right=25,"{\id}_{C\times I}"]&
      C\times I\times I\ar[r,"{\id}_{C}\times m"]&
      C\times I\rlap,
    \end{tikzcd}
  \end{equation*}
  whose upper and lower \(2\)-cells
  are commutative,
  shows that
  \(F({\id}_{C}\times0)\)
  is
  a right inverse to
  \(F({\id}_{C}\times p)\)
  and therefore
  \(F({\id}_{C}\times p)\) is an equivalence.
\end{proof}

\begin{proof}[Proof of \cref{xwyhup}]
  In this proof,
  we write \(L_{[0,1]}F\)
  for the geometric realization
  of \(F(\X\times\Delta^{\bullet})\)
  and prove that it satisfies the desired universal property.

  When \(F\) is \([0,1]\)-homotopy invariant,
  it is clear that
  the tautological map \(F\to L_{[0,1]}F\) is an equivalence.
  Hence it suffices to show that
  \(L_{[0,1]}F\)
  is \([0,1]\)-homotopy invariant for any~\(F\).

  By \cref{special},
  it suffices to show that
  for any compactum~\(X\)
  the maps
  \(L_{[0,1]}F(X)\rightrightarrows L_{[0,1]}F(X\times[0,1])\)
  induced by~\(0\) and \(1\in[0,1]\)
  are homotopic.
  This comes from the existence
  of a cosimplicial homotopy
  between~\(0\) and
  \(1\colon\Delta^{\bullet}\to\Delta^{\bullet}\times[0,1]\).
\end{proof}

\begin{remark}\label{x57xws}
  Under the situation of \cref{special},
  Morel--Voevodsky constructed in~\cite[Section~2.3]{MorelVoevodsky99}
  a cosimplicial object
  \(I^{\bullet}\) (which does not depend on the map~\(m\))
  such that the geometric realization of
  \(F(\X\times I^{\bullet})\)
  computes the \(I\)-homotopification.
  Beware that
  this one for \(I=[0,1]\) is \emph{not} equivalent
  to the simplicial object we have used in \cref{xwyhup}.
\end{remark}

\subsection{Comparison}\label{ss:stop_cn}

We prove the real version of \cref{com_cat}:

\begin{theorem}\label{hi_alg}
  Let \(A\) be the base change of
  a (discrete) associative ring over~\(\RR^{\delta}\)
  along \(\RR^{\delta}\to\RR\).
  Then \(K(A)^{\sol}\),
  which is discrete by \cref{dis_alg},
  is canonically identified with
  the geometric realization of
  \(K(A\otimes_{\RR}\Cls{C}(\Delta^{\bullet};\RR))\).
\end{theorem}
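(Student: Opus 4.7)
Define the accessible presheaf $F\colon\Cat{Cpt}^{\op}\to\Cat{Sp}$ by $F(S) = K(A\otimes_{\RR}\Cls{C}(S;\RR))$. Since $\Cls{C}(S_{1}\sqcup S_{2};\RR)\simeq\Cls{C}(S_{1};\RR)\times\Cls{C}(S_{2};\RR)$ and both $A\otimes_{\RR}(\X)$ and algebraic $K$-theory preserve finite products of rings, $F$ satisfies the hypersheaf condition on $\Cat{EDis}$ and identifies with the condensed spectrum $K(A)$. Set $G:=L_{[0,1]}F$; by \cref{xwyhup} one has $G(X)\simeq\lvert F(X\times\Delta^{\bullet})\rvert$, so in particular $G(*)\simeq\lvert K(A\otimes_{\RR}\Cls{C}(\Delta^{\bullet};\RR))\rvert$. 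Because geometric realization of simplicial spectra commutes with finite products, $G$ again preserves finite products on $\Cat{EDis}$ and hence is a condensed spectrum, equipped with a tautological comparison map $K(A)\simeq F\to G$.

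The plan is to identify $G$ with $K(A)^{\sol}$. Since $K(A)^{\sol}$ is discrete by \cref{dis_alg}, and discrete condensed spectra are $[0,1]$-homotopy invariant as accessible functors (cf.\ \cref{xvqpc9}), the solidification map $K(A)\to K(A)^{\sol}$ factors through $L_{[0,1]}F=G$ by the universal property of \cref{xwyhup}, producing $\varphi\colon G\to K(A)^{\sol}$. Granted the key claim that $G$ itself is discrete, the same universality shows that $K(A)\to G$ is the universal map from $K(A)$ to a discrete condensed spectrum: any such target is $[0,1]$-invariant as an accessible functor, so the given map from $F$ factors uniquely through $L_{[0,1]}F=G$, and the resulting map of accessible functors restricts to a map of condensed spectra since both sides are hypersheaves on $\Cat{EDis}$. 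Thus $G$ is the discretization of $K(A)$ in the sense of \cref{xjcm9x}; by \cref{1c99e9dff2} this coincides with $K(A)^{\sol}$, so $\varphi$ is an equivalence and evaluating at $*$ yields the theorem.

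The main obstacle is therefore the discreteness of $G$. Using \cref{dis}, this reduces to the claim that, for every ultrafilter $\mu$ on a set~$I$,
\[
  \injlim_{\mu(J)=1}\bigl\lvert K(A\otimes_{\RR}\Cls{C}(\beta J\times\Delta^{\bullet};\RR))\bigr\rvert\xrightarrow{\sim}\bigl\lvert K(A\otimes_{\RR}\Cls{C}(\Delta^{\bullet};\RR))\bigr\rvert.
\]
The subtlety is that the ring-theoretic stalk $\injlim_{\mu(J)=1}\Cls{C}(\beta J;\RR)$ is a proper germ-type enlargement of $\RR$, not $\RR$ itself, so the statement is not formal. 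My plan is to commute the filtered colimit past $K$ and past $\lvert\X\rvert$, isolate the excess germs as $\RR$-modules sitting inside the kernel of $\Cls{C}(\beta J\times\Delta^{n};\RR)\to\Cls{C}(\Delta^{n};\RR)$ at each simplicial level, and kill them after solidification via \cref{real_sol}, controlled by the truncatedness and excision results \cref{tr,exc}; the cosimplicial homotopy of $\Delta^{\bullet}$ should then collapse the residual discrepancy to zero, establishing discreteness of $G$.
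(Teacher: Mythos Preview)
Your framework contains a recoverable confusion and a genuine gap.

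The confusion: the accessible functor $F$ on $\Cat{Cpt}$ is not the condensed spectrum $K(A)$; only its condensation $F^{\con}$ is (they agree on $\Cat{EDis}$ but not on general compacta). Likewise $G=L_{[0,1]}F$ is only a presheaf; what you are really working with is $G^{\con}$. Once this is sorted out, your universality argument---that if $G^{\con}$ is discrete then it is the discretization of $K(A)$, hence equals $K(A)^{\sol}$---does go through.

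The genuine gap is the ``key claim'' that $G^{\con}$ is discrete. Your plan invokes \cref{real_sol}, \cref{tr}, and \cref{exc} to ``kill after solidification'' the germ-kernels, but all three are statements about what happens \emph{after} solidifying, whereas you are trying to establish a property of $G^{\con}$ \emph{before} any solidification is applied. There is no mechanism here by which ``the kernel is an $\RR$-module'' forces the unsolidified map
\[
  \injlim_{\mu(J)=1}K\bigl(A^{\delta}\otimes_{\RR^{\delta}}\Cls{C}(\beta J\times\Delta^{n};\RR)\bigr)
  \longrightarrow
  K\bigl(A^{\delta}\otimes_{\RR^{\delta}}\Cls{C}(\Delta^{n};\RR)\bigr)
\]
to be an equivalence, and the closing ``cosimplicial homotopy should collapse the discrepancy'' is not an argument. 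In fact the discreteness of $G^{\con}$ is a strictly stronger statement than the theorem, and the paper never claims it.

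The paper avoids this entirely. Rather than proving $G^{\con}$ discrete, it shows directly that the comparison map $L_{[0,1]}F(*)\to (F^{\con})^{\sol}(*)=K(A)^{\sol}(*)$, which exists by \cref{xvqpc9}, is an equivalence (condition~\cref{i:compare} of \cref{x286fd}). The class of $F$ for which this holds is closed under colimits by \cref{x4b50v}; so, via \cref{xr12fh} and the Breen--Deligne resolution \cref{bd}, one reduces $\tau_{\geq1}K(j(A))$ to functors of the form $\SS[j(\GL_{r_{1}}(A)\times\dotsb\times\GL_{r_{n}}(A))]$. These spaces are real algebraic varieties (as in the proof of \cref{dis_alg}), hence satisfy the hypothesis of \cref{lslc}, and \cref{x286fd} then identifies both $L_{[0,1]}$ at~$*$ and the solidification with $\SS[X^{\dis}]$. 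The nonconnective part is handled by the same descending induction via the fundamental theorem of $K$-theory used at the end of the proof of \cref{dis_alg}.
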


\begin{remark}\label{xgwdr7}
  By~\cite{Konovalov22}
  and \cref{tr},
  we can generalize \cref{hi_alg}
  to any connective \(\E_{1}\)-ring.
  We can also generalize it
  to any \(\RR\)-linear \(\infty\)-category
  with a bounded weight structure
  by writing it as a filtered colimit
  of \(\Perf\) of connective \(\E_{1}\)-rings.

  We do not know if \cref{hi_alg} remains true for
  arbitrary \(\RR\)-linear \(\infty\)-categories~\(\cat{C}\).
\end{remark}

\begin{proposition}\label{x286fd}
  Let \(X\) be a compactly generated Hausdorff space
  that satisfies the condition of \cref{lslc}.
  Then \(F=\Sigma^{\infty}_{+}\circ j(X)\colon\Cat{Cpt}^{\op}\to\Cat{Sp}\)
  satisfies the following:
  \begin{conenum}
    \item\label{i:cart}
      The map
      \(F(X_{1}\amalg\dotsb\amalg X_{n})\to F(X_{1})\oplus\dotsb\oplus F(X_{n})\)
      is an equivalence
      for any~\(X_{1}\), \dots,~\(X_{n}\).
    \item\label{i:compare}
      The morphism \(L_{[0,1]}F\to(F^{\con})^{\sol}\)
      induced by~\cref{xvqpc9}
      is an equivalence at~\(*\).
  \end{conenum}
\end{proposition}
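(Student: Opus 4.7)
The plan is to identify both sides of~\cref{i:compare} with $\Sigma^{\infty}_{+}X^{\dis}$, where $X^{\dis}$ is the discretization afforded by \cref{lslc}. Part~\cref{i:cart} is a preliminary step that identifies $F^{\con}$ with the free condensed spectrum on $j(X)$.

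For~\cref{i:cart}, the content is that $F$ preserves finite products in $\Cat{Cpt}^{\op}$, i.e., sends disjoint unions to direct sums. The essential point is that the condensed spectrum $\SS[j(X)]$, by construction as a sheaf on $\Cat{EDis}$, automatically satisfies finite-product preservation by Gleason's theorem, and $F$ coincides with this condensed spectrum on disjoint unions of compacta. For~\cref{i:compare}, I would then compute both sides. By~\cref{i:cart}, we may identify $F^{\con}$ with $\SS[j(X)]=\Sigma^{\infty}_{+}j(X)$, and the hypothesis of \cref{lslc} puts us in the range of \cref{xm65jj}, giving $\SS[j(X)]^{\sol}\simeq\SS[X^{\dis}]$; evaluating at $*$ yields the discrete spectrum $\Sigma^{\infty}_{+}X^{\dis}$. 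On the other side, the explicit formula from \cref{xwyhup} combined with the fact that $\Sigma^{\infty}_{+}$ commutes with colimits gives $L_{[0,1]}F(*)\simeq\Sigma^{\infty}_{+}\lvert\Cls{C}(\Delta^{\bullet};X)\rvert$. The natural comparison map is then the one induced by a canonical map $\lvert\Cls{C}(\Delta^{\bullet};X)\rvert\to X^{\dis}$ of condensed animas.

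The main obstacle will be to verify that this last comparison is an equivalence under the hypothesis of \cref{lslc}. By \cref{xyrqub}, $X^{\dis}$ is the shape of $\Shv^{\Post}(X)$. My approach is to exploit the open hypercover $X_{\bullet}\to X$ by coproducts of contractible condensed sets constructed in the proof of \cref{lslc}: by \cref{contract} and descent via \cref{hcov}, the discretization of the simplicial object $X_{\bullet}'$ computes $X^{\dis}$, while a cosimplicial homotopy argument parallel to the one in \cref{xwyhup} (applied to each $\Cls{C}(\Delta^{\bullet};X_{n}')$, which is again a coproduct of contractibles) identifies the singular realization $\lvert\Cls{C}(\Delta^{\bullet};X)\rvert$ with the same common object. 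Applying $\Sigma^{\infty}_{+}$ then transports the equivalence of condensed animas to spectra and yields~\cref{i:compare} at~$*$.
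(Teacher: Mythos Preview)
Your strategy for \cref{i:compare} is essentially the paper's: identify the right-hand side with \(\SS[X^{\dis}]\), reduce to the unstable comparison \(\lvert\Map(\Delta^{\bullet},X)\rvert\to X^{\dis}\), pass through the open hypercover \(X'_{\bullet}\) of contractible pieces from the proof of \cref{lslc}, and settle the contractible case by a homotopy argument. Two imprecisions are worth flagging. First, \cref{xm65jj} only applies to compacta with \(\SS[X^{\dis}]\) pseudocoherent; for the compactly generated Hausdorff \(X\) here, the identification \(\SS[X]^{\sol}\simeq\SS[X^{\dis}]\) is exactly the \emph{conclusion} of \cref{lslc}, so cite that directly. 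Second, the parenthetical ``\(\Cls{C}(\Delta^{\bullet};X'_{n})\), which is again a coproduct of contractibles'' does not parse (for each \(k\) this is just a set), and a cosimplicial homotopy on \(\Delta^{\bullet}\) alone is not what finishes the argument. The paper isolates the needed step as \cref{x6lbvd}: the contracting homotopies on the pieces of \(X'_{n}\) assemble into a natural transformation \(F(\X)\to F(\X\times[0,1])\), and after \(L_{[0,1]}\) this exhibits the composite \(F\to X^{\dis}\xrightarrow{x}F\) as homotopic to the identity, whence \(L_{[0,1]}F(*)\to X^{\dis}\) is an equivalence. Your sketch has the right ingredients but misplaces where the homotopy lives.

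On \cref{i:cart}: your justification conflates the presheaf \(F(T)=\SS[\Cls{C}(T;X)]\) with the condensed spectrum \(\SS[X]\); these do not agree on extremally disconnected \(S\), since \(\SS[-]\) does not preserve finite products. The paper simply declares \cref{i:cart} ``clear'' without elaboration, so there is little to compare here, but your argument as written does not establish it.
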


\begin{proof}
  Since \cref{i:cart} is clear,
  we prove~\cref{i:compare} using \cref{lslc}.
  The morphism in question is
  \begin{equation*}
    L_{[0,1]}F
    \to
    (F^{\con})^{\sol}
    \simeq
    \SS[X]^{\sol}
    \simeq
    \SS[X]^{\dis}
    \simeq
    \SS[X^{\dis}].
  \end{equation*}
Hence we are reduced to an unstable question;
  it suffices to show that
  the geometric realization of
  \begin{equation*}
    \Map(\Delta^{\bullet},X)
    \to
    \Map(\Delta^{\bullet},X^{\dis})
  \end{equation*}
  is an equivalence.
  Note that the target is constant
  by \cref{cw_indis}.
  For this,
  we take \(X_{\bullet}\)
  and \(X_{\bullet}'\) in the proof of \cref{lslc}.
  By the same argument,
  it suffices to show that
  the geometric realization of
  \begin{equation*}
    \Map(\Delta^{\bullet},X'_{n})
    \to
    \Map(\Delta^{\bullet},(X'_{n})^{\dis})
    \simeq
    (X'_{n})^{\dis}
  \end{equation*}
  is an equivalence for \(n\geq0\).
  Since \(X'_{n}\) is a coproduct of contractible condensed sets,
  this follows from \cref{x6lbvd} below.
\end{proof}

\begin{lemma}\label{x6lbvd}
  Let \((X_{i})_{i}\) be a family of contractible condensed animas
  and \(X\) denote the coproduct.
  Then the geometric realization of the map
  \begin{equation*}
    \Map(\Delta^{\bullet},X)
    \to
    \Map(\Delta^{\bullet},X^{\dis}),
  \end{equation*}
  where \(X^{\dis}\) exists by~\cref{i:c_dis}
  of \cref{contract},
  is an equivalence.
\end{lemma}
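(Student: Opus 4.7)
The plan is to first identify $X^{\dis}$ and the target explicitly, then to decompose the source by the connectedness of $\Delta^n$, and finally to reduce each summand to the contractibility of $X_i$.

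First, I would compute $X^{\dis}$. By \cref{i:c_dis} of \cref{contract}, each $X_i$ has discretization $X_i^{\dis} \simeq *$, and by \cref{dis_colim} the coproduct $X$ is then discretizable with $X^{\dis} \simeq \coprod_i * \simeq I$, the index set viewed as a discrete condensed anima. Applying \cref{cw_indis} to the contractible finite CW~complex $\Delta^n$ yields $\Map(\Delta^n, X^{\dis}) \simeq \Map((\Delta^n)^{\dis}, X^{\dis}) \simeq \Map(*, X^{\dis}) \simeq I$ for every $n$, so the target simplicial anima is constant with value $I$, and its geometric realization is $I$.

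Next, I would decompose the source. Since coproducts in the $\infty$-topos $\Cat{ConAni}$ are disjoint and universal, a map $\Delta^n \to \coprod_i X_i$ induces a clopen decomposition of $\Delta^n$ indexed by a finite subset of $I$ (finiteness using that $\Delta^n$ is compact); connectedness of $\Delta^n$ forces this decomposition to be trivial, so the map factors through a single $X_i$. Hence $\Map(\Delta^n, X) \simeq \coprod_i \Map(\Delta^n, X_i)$ naturally in $n$, and since geometric realization commutes with coproducts of animas,
\[
|\Map(\Delta^\bullet, X)| \simeq \coprod_i |\Map(\Delta^\bullet, X_i)|,
\]
with the map to $I$ being the evident projection sending each summand to the corresponding point.

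Finally, I would show $|\Map(\Delta^\bullet, X_i)| \simeq *$ for each contractible $X_i$. A homotopy $X_i \times [0,1] \to X_i$ from a base point to the identity, combined with the standard prism triangulation of $\Delta^n \times [0,1]$, produces a simplicial homotopy exhibiting every simplex of $\Map(\Delta^\bullet, X_i)$ as homotopic to a constant map; equivalently, this is \cref{xwyhup} applied to the accessible functor $K \mapsto \Map(K, X_i)$, combined with the fact that $X_i \to *$ is a $[0,1]$-homotopy equivalence. Assembling the three steps yields the claimed equivalence $\coprod_i * \simeq I$. The main obstacle is the decomposition step: one must carefully justify that within $\Cat{ConAni}$ the topological connectedness of $\Delta^n$ actually precludes non-trivial factorizations through the sheafified coproduct. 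Once this is settled, the remainder is formal.
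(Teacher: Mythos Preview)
Your argument is correct, including the flagged decomposition step: for a discrete set $I$ regarded as a condensed anima one has $\Map(\Delta^{n},\underline{I})=\underline{I}(\Delta^{n})$, which is the set of locally constant $I$-valued functions on the connected compactum $\Delta^{n}$, hence just $I$; composing any $f\colon\Delta^{n}\to\coprod_{i}X_{i}$ with the projection to $\coprod_{i}{*}=\underline{I}$ therefore picks out a single index, and pulling back along $\{i\}\hookrightarrow\underline{I}$ (coproducts in an $\infty$-topos being disjoint and universal) forces $f$ to factor through $X_{i}$.

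The paper proceeds differently and avoids this decomposition altogether. Since products distribute over coproducts, the contracting homotopies $h_{i}\colon X_{i}\times[0,1]\to X_{i}$ assemble to a single map $h\colon X\times[0,1]\to X$ whose restriction at $0$ is the composite $X\to X^{\dis}\xrightarrow{x} X$ and at $1$ is $\id_{X}$. Feeding this into the functor $F(\X)=\Map(\X,X)$ and invoking \cref{xwyhup} once, the two restrictions become homotopic after $L_{[0,1]}$, so the composite through $X^{\dis}$ acts as the identity on $\lvert\Map(\Delta^{\bullet},X)\rvert$; together with the trivial fact that $X^{\dis}\to X\to X^{\dis}$ is the identity, this yields the equivalence directly. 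In effect, the paper runs your step~4 once on the assembled coproduct rather than summand by summand, trading the connectedness check for the (immediate) observation that $\X\times[0,1]$ commutes with $\coprod_{i}$. Your approach is more explicit about the target being the index set, while the paper's is shorter and sidesteps any appeal to connectedness of $\Delta^{n}$ in $\Cat{ConAni}$.
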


\begin{proof}
  For each~\(i\),
  we fix a point \(x_{i}\in X_{i}\) and
  a homotopy between \(x_{i}\) and \({\id}_{X_{i}}\).
For a condensed anima~\(T\),
  we consider the composite
  \begin{equation}
    \label{e:phcf6}
    \Map(T,X)
    \to
    \Map(T\times[0,1],X\times[0,1])
    \xrightarrow{h}
    \Map(T\times[0,1],X).
  \end{equation}
  By definition,
  this restricts to the composite
  \(\Map(T,X)\to\Map(T,X^{\dis})\xrightarrow{x}\Map(T,X)\)
  at \(0\in[0,1]\) and
  and to \(\id_{\Map(T,X)}\) at \(1\in[0,1]\).

  We write \(F\colon\Cat{Cpt}^{\op}\to\Cat{Ani}\)
  for the accessible functor corresponding to~\(X\).
  Then \cref{e:phcf6} gives a map
  \(F(\X)\to F(\X\times[0,1])\)
  and it induces a map
  \((L_{[0,1]}F)(*)\to(L_{[0,1]}F)([0,1])\)
  that is an equivalence since \(L_{[0,1]}F\) is \([0,1]\)-homotopy invariant.
  Therefore,
  the restriction to~\(0\) and \(1\in[0,1]\)
  both give its inverse.
  From this and the formula of \(L_{[0,1]}\) in \cref{xwyhup},
  we get the desired result.
\end{proof}

\begin{lemma}\label{x4b50v}
  The class of accessible functors
  \(\Cat{Cpt}^{\op}\to\Cat{Sp}\)
  satisfying~\cref{i:cart,i:compare} of \cref{x286fd}
  is closed under colimits.
\end{lemma}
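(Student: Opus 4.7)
The plan is to verify conditions~\cref{i:cart} and~\cref{i:compare} of \cref{x286fd} separately, using that colimits in the \(\infty\)-category of accessible functors \(\Cat{Cpt}^{\op}\to\Cat{Sp}\) are computed pointwise (after enlarging the accessibility bound of the indexing diagram if necessary).

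For~\cref{i:cart}, the observation is that finite direct sums in the stable \(\infty\)-category~\(\Cat{Sp}\) commute with arbitrary colimits. Hence, if \(F=\operatorname{colim}_{i}F_{i}\) and each~\(F_{i}\) satisfies~\cref{i:cart}, then for any \(X_{1},\dotsc,X_{n}\in\Cat{Cpt}\),
\begin{equation*}
  F(X_{1}\amalg\dotsb\amalg X_{n})
  \simeq\operatorname{colim}_{i}\bigl(F_{i}(X_{1})\oplus\dotsb\oplus F_{i}(X_{n})\bigr)
  \simeq F(X_{1})\oplus\dotsb\oplus F(X_{n}).
\end{equation*}

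For~\cref{i:compare}, the strategy is to show that both \(F\mapsto L_{[0,1]}F(*)\) and \(F\mapsto((F^{\con})^{\sol})(*)\) define colimit-preserving functors from accessible functors to \(\Cat{Sp}\); the natural transformation between them is then an equivalence on \(F=\operatorname{colim}F_{i}\) whenever it is on each~\(F_{i}\). The first functor is manifestly colimit-preserving from the formula \(L_{[0,1]}F(*)=\lvert F(\Delta^{\bullet})\rvert\). For the second I would factor as: condensation \(F\mapsto F^{\con}\), left adjoint to the inclusion of \(\Cat{ConSp}\) into accessible functors by \cref{xpog9z}; solidification \(E\mapsto E^{\sol}\), left adjoint by \cref{xdjyu0}; and evaluation at~\(*\). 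The first two steps preserve colimits as left adjoints, and evaluation at~\(*\) on \(\Cat{ConSp}\) preserves colimits because colimits in \(\Cat{ConSp}\) are computed pointwise (finite products commuting with colimits in~\(\Cat{Sp}\)).

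The main obstacle is handling the fact that, after solidifying, one lands in \(\Cat{SolSp}\), whose inclusion \(U\colon\Cat{SolSp}\hookrightarrow\Cat{ConSp}\) is a right adjoint and so does not a priori preserve colimits. The resolution comes from reading \cref{xdjyu0} carefully: \(\Cat{SolSp}_{\geq 0}\) is defined as the \emph{closure} of \(\{\SS^{I}\}\) inside \(\Cat{ConSp}_{\geq 0}\) under colimits and extensions, so it is by construction closed under colimits in~\(\Cat{ConSp}_{\geq 0}\); passing to the stabilization, \(\Cat{SolSp}\) is likewise closed under colimits in~\(\Cat{ConSp}\), so \(U\) does preserve colimits. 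Consequently colimits in~\(\Cat{SolSp}\) coincide with the pointwise colimits in~\(\Cat{ConSp}\), and evaluation at~\(*\) preserves them. The composition of the three colimit-preserving steps gives the desired property of \(F\mapsto((F^{\con})^{\sol})(*)\), completing the proof.
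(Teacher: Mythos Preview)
Your proof is correct and follows essentially the same strategy as the paper's: verify \cref{i:cart} via commutation of finite sums with colimits, then for \cref{i:compare} argue that both \(F\mapsto L_{[0,1]}F(*)\) and \(F\mapsto(F^{\con})^{\sol}(*)\) preserve colimits. The only difference is that the paper restricts to functors satisfying \cref{i:cart} so that condensation is literally restriction to~\(\Cat{EDis}_{\kappa}\) (no sheafification needed), whereas you invoke \cref{xpog9z} to treat condensation as a left adjoint in general; both routes are valid, and you are more explicit than the paper about why the inclusion \(\Cat{SolSp}\hookrightarrow\Cat{ConSp}\) preserves colimits.
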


\begin{proof}
  The class of accessible functors satisfying~\cref{i:cart}
  is closed under colimits.
  Note also that
  \(L_{[0,1]}\) preserves colimits.
  The result follows from the observation that
  when we only consider functors~\(F\)
  satisfying~\cref{i:cart},
  the construction
  \(F\mapsto(F^{\con})^{\sol}(*)\)
  preserves colimits.
\end{proof}

\begin{proof}[Proof of \cref{hi_alg}]
  We write \(K(j(A))\) for the accessible functor
  \(\Cat{Cpt}^{\op}\to\Cat{Sp}\)
  given by
  \(T\mapsto K(\Cls{C}(T;A))\)
  and similarly for its variants.
  We wish to show that
  \(K(j(A))\) satisfies~\cref{i:compare} of \cref{x286fd}.

  We first prove this for \(\tau_{\geq1}K(j(A))\).
  By \cref{x4b50v}
  and the same argument as in the proof of \cref{xufs5h},
  it suffices to show that
  \(\SS[B{\GL_{r_{1}}}(j(A))\times\dotsb\times B{\GL_{r_{n}}(j(A))}]\)
  for \(r_{1}\), \dots, \(r_{n}\geq0\)
  satisfies~\cref{i:compare} of \cref{x286fd}.
  By using \cref{x4b50v} again,
  it suffices to show that
  \(\SS[{\GL_{r_{1}}}(j(A))\times\dotsb\times{\GL_{r_{n}}(j(A))}]
  =\SS[j({\GL_{r_{1}}}(A)\times\dotsb\times{\GL_{r_{n}}(A)})]\)
  satisfies the same.
  The proof of \cref{dis_alg}
  shows that
  the topological space
  \({\GL_{r_{1}}}(A)\times\dotsb\times{\GL_{r_{n}}(A)}\)
  satisfies the condition of \cref{lslc}
  and therefore it is sufficient by \cref{x286fd}.

  We then prove this for \(K(j(A))\),
  but the argument in the proof of \cref{dis_alg}
  applies here as well.
\end{proof}

We record the following variant in the Banach case:

\begin{theorem}\label{hi_ban}
  Let \(A\) be a real Banach\footnote{In fact,
    this remains true for real quasi-Banach algebras
    since \(K_{0}\) is still \([0,1]\)-homotopy invariant in this case
    as we will explain in~\cite{k-ros-3}.
  } algebra.
  Then \((K^{\cn}(A))^{\sol}\),
  which is discrete by \cref{dis_ban},
  is canonically identified with
  the geometric realization of
  \(K^{\cn}(\Cls{C}(\Delta^{\bullet};A))\).
\end{theorem}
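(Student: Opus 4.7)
The plan is to follow the proof of \cref{hi_alg} almost verbatim, updating the geometric input from real algebraic varieties to open subsets of Banach spaces. Write \(K^{\cn}(j(A))\) for the accessible functor \(\Cat{Cpt}^{\op}\to\Cat{Sp}\) given by \(T\mapsto K^{\cn}(\Cls{C}(T;A))\); the goal is to verify \cref{i:compare} of \cref{x286fd} for \(F=K^{\cn}(j(A))\), since evaluating at~\(*\) then yields the desired identification.

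Since \(L_{[0,1]}\), \((\X)^{\con}\), and \((\X)^{\sol}\) all preserve colimits---and hence fiber sequences of spectra---I would split the fiber sequence \(\tau_{\geq 1}K^{\cn}(j(A))\to K^{\cn}(j(A))\to K_{0}(j(A))\) and handle the two ends separately. For \(\tau_{\geq 1}K^{\cn}(j(A))\), I would mirror the proof of \cref{hi_alg}: apply \cref{xr12fh} to replace \(\Vect(j(A))^{\simeq}\) by \(\Vect'(j(A))\simeq\coprod_{r}B\GL_{r}(j(A))\) in degrees \(\geq 1\), invoke the Breen--Deligne resolution \cref{bd} and the bar resolution for \(B\GL_{r}\), and use \cref{x4b50v} repeatedly. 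This reduces the problem to verifying \cref{i:compare} for \(\SS[j(\GL_{r_{1}}(A)\times\dotsb\times\GL_{r_{n}}(A))]\), which by \cref{x286fd} amounts to checking that \(Y=\GL_{r_{1}}(A)\times\dotsb\times\GL_{r_{n}}(A)\) satisfies the condition of \cref{lslc}. Since \(A\) is a real Banach algebra, each \(\GL_{r}(A)\) is open in the Banach space \(\Mat_{r}(A)\); every point of every open subset of~\(Y\) admits a convex open ball as a neighborhood, and such a ball is contractible as a condensed set by \cref{xmyz9h}, confirming \cref{lslc}.

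For the remaining piece \(K_{0}(j(A))\), I would invoke \([0,1]\)-homotopy invariance of \(K_{0}\) for real Banach algebras (classical; cf.~the footnote to this theorem), giving \(L_{[0,1]}K_{0}(j(A))(*)\simeq K_{0}(A)\); on the other side, \cref{dis_ban} ensures \(K^{\cn}(A)^{\sol}\) is discrete, so its \(\pi_{0}\) is the discrete group \(K_{0}(A)\), and a diagram chase identifies the comparison map at~\(*\) as the identity.

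The main obstacle is the verification of the local contractibility condition of \cref{lslc} for \(\GL_{r}(A)\): this is the sole place where the Banach hypothesis, via local convexity of \(\Mat_{r}(A)\), is essential, and is what distinguishes this argument from the algebraic case of \cref{hi_alg}. A minor subtlety is that, unlike the proof of \cref{hi_alg} which uses descending induction via the fundamental theorem to handle \(K_{0}\) and all negative \(K\)-groups at once, here I must separately address \(K_{0}\), for which classical \([0,1]\)-homotopy invariance for Banach algebras suffices.
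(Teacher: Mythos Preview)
Your proposal is correct and follows essentially the same route as the paper's proof, which also reduces to \(\tau_{\geq1}K\) via \([0,1]\)-homotopy invariance of \(K_0\) and then repeats the argument of \cref{hi_alg} using that \(\GL_{r_1}(A)\times\dotsb\times\GL_{r_n}(A)\) satisfies the hypothesis of \cref{lslc}. One small refinement: for the \(K_0\) piece you should invoke \cref{kap_dis} directly (the condensed abelian group \(K_0(A)\) is already discrete, hence solid, so \((K_0(j(A))^{\con})^{\sol}(*)=K_0(A)\)) rather than routing through \cref{dis_ban} and \(\pi_0\) of \((K^{\cn}(A))^{\sol}\), since the latter identification would require knowing that \((\tau_{\geq1}K^{\cn}(A))^{\sol}\) remains \(1\)-connective.
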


\begin{proof}
  Note that \(K_{0}\) is \([0,1]\)-homotopy invariant
  and hence it suffices to consider \(\tau_{\geq1}K\).
  By the same argument as in the proof of \cref{hi_alg},
  the desired result follows from the fact that
  \({\GL_{r_{1}}}(A)\times\dotsb\times{\GL_{r_{n}}(A)}\)
  satisfies the condition of \cref{lslc}.
\end{proof}

\section{Comparison with operator \texorpdfstring{\(K\)}{K}-theory}\label{s:op}

We prove the real version of \cref{com_ban} in this section.
Usually, the term ``operator \(K\)-theory''
refers to the homotopy groups,
instead of the whole spectrum.
We characterize it using reasonable axioms
and show that \((K^{\cn})^{\sol}\)
satisfies them in \cref{ss:op_cn}.
From that,
we get \(K^{\top}\)
by inverting the Bott element~\(\beta\).
We explain
the geometric construction of~\(\beta\) in \cref{ss:op_bott}.

\subsection{Comparison of the connective part}\label{ss:op_cn}

We compare \(K^{\cn}(A)^{\sol}\),
which is discrete by \cref{dis_ban},
with \(K^{\top}(A)\).

\begin{definition}\label{xw1rsb}
  We write \(\Cat{BAlg}_{\RR}\) for the category
  of real Banach algebras
  and contracting algebra maps.
\end{definition}

\begin{proposition}\label{x9ac0m}
  Let \(\cat{C}\) be the full subcategory
  of \(\Fun(\Cat{BAlg}_{\RR},\Cat{Sp}_{\geq0})_{K^{\cn}/}\)
  spanned by the functors~\(E\) under~\(K^{\cn}\)
  satisfying the following:
  \begin{conenum}
    \item\label{i:zero}
      The morphism
      \(K^{\cn}\to E\)
      induces an equivalence on~\(\pi_{0}\).
    \item\label{i:hi}
      The functor
      \(E\) is homotopy invariant
      in the sense that
      \(E(A)\to E(\Cls{C}([0,1];A))\)
      is an equivalence
      for any Banach algebra~\(A\).
    \item\label{i:exc}
      It is excisive;
      i.e.,
      when
      \begin{equation*}
        \begin{tikzcd}
          A\ar[r]\ar[d]&
          A'\ar[d]\\
          B\ar[r]&
          B'
        \end{tikzcd}
      \end{equation*}
      is a square in \(\Cat{BAlg}_{\RR}\)
      such that the rows are surjective
      with isomorphic kernels,
      it is carried to a cartesian square in \(\Cat{Sp}_{\geq0}\)
      by~\(E\).
  \end{conenum}
  Then \(\cat{C}\) is contractible
  and spanned by \(K^{\cn}(\X)^{\sol}\).
\end{proposition}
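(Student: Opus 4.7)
My plan has three parts: verify $K^{\cn}(\X)^{\sol}$ lies in $\cat{C}$, exhibit it as an initial object, and deduce contractibility. For the first part, I check the three axioms in turn. Condition~\cref{i:zero} reduces to the statement that $K_{0}(A)$ is discrete (hence already solid), so that solidification does not change $\pi_{0}$; we use~\cref{kap_dis}. Condition~\cref{i:hi} is a direct consequence of~\cref{hi_ban}, which identifies $K^{\cn}(A)^{\sol}$ with $\lvert K^{\cn}(\Cls{C}(\Delta^{\bullet};A))\rvert$, together with the cosimplicial contraction argument of~\cref{special}. Condition~\cref{i:exc} follows from~\cref{exc} applied to Milnor squares of real Banach algebras: such squares are cartesian in condensed $\E_{1}$-rings and the hypothesis $\pi_{0}(B\otimes_{A}A')\to\pi_{0}(B')$ is automatic, since already $B'\cong B\otimes_{A}A'$ at the level of plain rings.

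For initiality, given any $E\in\cat{C}$ with structure map $\eta\colon K^{\cn}\to E$, the Banach-algebra analogue of~\cref{xwyhup} gives, from~\cref{i:hi}, a canonical factorization
\[
  K^{\cn}\to\lvert K^{\cn}(\Cls{C}(\Delta^{\bullet};\X))\rvert\simeq K^{\cn}(\X)^{\sol}\xrightarrow{\phi}E.
\]
To show $\phi$ is an equivalence, I would exploit the Bass--Karoubi cone--suspension sequence $\Sigma A\to CA\to A$, where $CA=\Cls{C}_{0}((0,1];A)$ is contractible and $\Sigma A=\Cls{C}_{0}((0,1);A)$. Any functor $F$ satisfying~\cref{i:hi,i:exc} carries this to a fiber sequence whose middle term vanishes, producing an equivalence $F(\Sigma A)\simeq\Omega F(A)$. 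Iterating gives $\pi_{n}F(A)\simeq\pi_{0}F(\Sigma^{n}A)$ for $n\geq0$. Applying this to both $K^{\cn}(\X)^{\sol}$ and $E$, condition~\cref{i:zero} forces $\phi$ to be an equivalence on every $\pi_{n}$, hence an equivalence of connective spectra.

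Contractibility of $\cat{C}$ is then immediate: the mapping space $\Map_{\cat{C}}(K^{\cn}(\X)^{\sol},E)$ is the fiber over $\eta$ of the restriction $\Map(K^{\cn}(\X)^{\sol},E)\to\Map(K^{\cn},E)$, which is an equivalence by the universal property produced above, so the fiber is contractible; combining this with the fact that every object of $\cat{C}$ is equivalent to $K^{\cn}(\X)^{\sol}$ via~$\phi$ yields $\cat{C}\simeq{*}$. The main obstacle I anticipate is making the universal property of $\lvert K^{\cn}(\Cls{C}(\Delta^{\bullet};\X))\rvert$ precise in the Banach-algebra setting (as opposed to the compacta setting of~\cref{xwyhup}) and verifying that the cone $CA$ really is contractible in the sense of~\cref{xv78wj}, so that~\cref{i:hi} forces $F(CA)$ to vanish and the Bass--Karoubi argument runs.
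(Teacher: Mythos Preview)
Your overall architecture matches the paper's exactly: verify the three axioms for $K^{\cn}(\X)^{\sol}$ using \cref{kap_dis}, \cref{hi_ban}, and \cref{exc}; use the universal property coming from \cref{hi_ban} to produce a canonical map $K^{\cn}(\X)^{\sol}\to E$ for any $E\in\cat{C}$; then prove this map is an equivalence by an induction on~$\pi_n$ that uses~\cref{i:hi} and~\cref{i:exc} to shift degrees, with~\cref{i:zero} handling the base case.

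The only real divergence is in the degree-shifting step. The paper stays inside the category of \emph{unital} Banach algebras by using the Milnor square coming from $S^{1}=D^{1}\amalg_{S^{0}}D^{1}$, which (via Dugundji) has surjective rows and yields $\pi_{n}E(\Cls{C}(S^{1};A))\simeq\pi_{n}E(A)\oplus\pi_{n+1}E(A)$ compatibly with the comparison map. Your cone--suspension sequence $\Sigma A\to CA\to A$ is the nonunital avatar of the same idea, but $CA$ and $\Sigma A$ are not objects of $\Cat{BAlg}_{\RR}$ as set up here (the functor $K^{\cn}$ is defined via $\Vect$, hence on unital rings), and condition~\cref{i:exc} speaks only of squares in that category, not of short exact sequences. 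So as written you cannot literally evaluate $E$ at $CA$ or invoke~\cref{i:exc} on that sequence; you would first have to unitize, which after unwinding is precisely the $S^{1}$ square the paper uses. Your stated worry about contractibility ``in the sense of~\cref{xv78wj}'' is a red herring---that definition concerns condensed animas, whereas~\cref{i:hi} is a condition on Banach-algebra functors---but it points at the right soft spot: the nonunital objects are where your argument needs adjustment.
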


Since any reasonable construction
of~\(\tau_{\geq0}K^{\top}(\X)\)
satisfies the conditions in \cref{x9ac0m},
we say that
\(K^{\cn}(\X)^{\sol}\)
is equivalent to \(\tau_{\geq0}K^{\top}(\X)\).

\begin{proof}
  Note that \(K^{\cn}\to(K^{\cn}(\X))^{\sol}\)
  satisfies \cref{i:zero,i:hi,i:exc}
  by \cref{kap_dis,hi_ban,exc},
  respectively.

  We consider \(K^{\cn}\to E\) in~\(\cat{C}\).
  By \cref{i:hi}
  and the universality of~\(K^{\cn}(\X)^{\sol}\) shown in \cref{hi_ban},
  we get a canonical map \(K^{\cn}(\X)^{\sol}\to E\).
  We claim that this is an equivalence.
  We show by induction that
  it induces an equivalence on~\(\pi_{n}\).
  When \(n=0\), this follows from~\cref{i:zero}.
  For any~\(A\),
  from
  \(S^{1}=D^{1}\amalg_{S^{0}}D^{1}\),
  we get the square
  \begin{equation*}
    \begin{tikzcd}
      \Cls{C}(S^{1};A)\ar[r]\ar[d]&
      \Cls{C}(D^{1};A)\ar[d]\\
      \Cls{C}(D^{1};A)\ar[r]&
      \Cls{C}(S^{0};A)\rlap.
    \end{tikzcd}
  \end{equation*}
  By the Dugundji extension theorem~\cite{Dugundji51},
  all the arrows are surjective.
  Hence by \cref{i:hi,i:exc},
  we see that \(\pi_{n}E(\Cls{C}(S^{1};A))\)
  is isomorphic to
  \(\pi_{n}E(A)\oplus\pi_{n+1}E(A)\)
  compatibly with \(K^{\cn}(\X)^{\sol}\to E\).
  Therefore, this map is an equivalence.
\end{proof}

\subsection{Bott inversion}\label{ss:op_bott}

Therefore,
we can construct~\(K^{\top}\)
from \(K^{\cn}(\X)^{\sol}\)
by inverting the Bott element.
We here explain how to concretely construct various Bott elements:

\begin{definition}\label{xu4fsz}
The map \(B\HH^{\times}\to\Omega^{\infty}K^{\cn}(\HH)\)
  determines the map
  \(\Sigma\SS[\HH^{\times}]\to K^{\cn}(\HH)\).
  By solidification,
  we get a map
  \(\Sigma\SS[\HH^{\times}]^{\sol}
  \simeq\Sigma\SS\oplus\Sigma^{4}\SS
  \to K^{\cn}(\HH)^{\sol}\).
  Let \(\beta_{\HH}\in\pi_{4}(K^{\cn}(\HH)^{\sol})\)
  be the image of a generator
  of \(\pi_{4}(\Sigma^{4}\SS)\),
  which is determined up to sign.

  Note that the canonical map
  \(\RR\to\HH\otimes_{\RR}\HH\simeq\Mat_{4}(\RR)\)
  induces an equivalence
  on \(K\)-theory.
  Hence there is a canonical map
  \begin{equation*}
    \pi_{4}(K^{\cn}(\HH)^{\sol})
    \otimes
    \pi_{4}(K^{\cn}(\HH)^{\sol})
    \to
    \pi_{8}(K^{\cn}(\HH\otimes_{\RR}\HH)^{\sol})
    \simeq
    \pi_{8}(K^{\cn}(\RR)^{\sol})
  \end{equation*}
  We write~\(\beta_{\RR}\) for the image
  of \(\beta_{\HH}\otimes\beta_{\HH}\).

  Similarly,
  we write \(\beta_{\CC}\in\pi_{2}(K^{\cn}(\CC)^{\sol})\)
  for the image of a generator
  of \(\pi_{2}(\Sigma^{2}\SS)\)
  under the map
  \(\Sigma\SS[\CC^{\times}]^{\sol}\simeq\Sigma\SS\oplus\Sigma^{2}\SS
  \to K^{\cn}(\CC)^{\sol}\),
  which is determined up to sign.
\end{definition}

\begin{remark}\label{xcm1ia}
  The symbol~\(\beta_{\X}\) just comes from notational convenience
  and we do not claim that there is a uniform way to define them.
\end{remark}

We see that \(\beta_{\RR}\) and~\(\beta_{\CC}\)
are the usual Bott elements:

\begin{proposition}\label{xyrxoa}
  We consider \(\beta_{\RR}\) and~\(\beta_{\CC}\)
  in \cref{xu4fsz}.
  \begin{enumerate}
    \item\label{i:beta_r}
      The element~\(\beta_{\RR}\) is a generator
      of \(\pi_{8}{\ko}\simeq\ZZ\).
    \item\label{i:beta_c}
      The element~\(\beta_{\CC}\) is a generator
      of \(\pi_{2}{\ku}\simeq\ZZ\).
  \end{enumerate}
\end{proposition}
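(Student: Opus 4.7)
The strategy is to invoke (the real version of) \cref{com_ban}, proved in \cref{ss:op_cn}, which supplies canonical equivalences $K^{\cn}(\CC)^{\sol}\simeq\tau_{\geq0}\ku$, $K^{\cn}(\HH)^{\sol}\simeq\tau_{\geq0}\ksp$, and $K^{\cn}(\RR)^{\sol}\simeq\tau_{\geq0}\ko$, and then to identify the maps of \cref{xu4fsz} geometrically with the classical Bott-periodicity constructions.

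For \cref{i:beta_c}, the underlying topological group $\CC^{\times}$ deformation retracts onto $S^{1}$, so by \cref{lslc} its condensed discretization is $S^{1}$. Hence $\Sigma\SS[\CC^{\times}]^{\sol}\simeq\Sigma\SS\oplus\Sigma^{2}\SS$ and the $\Sigma^{2}\SS$ summand is $\Sigma^{\infty}S^{2}$. The map $\Sigma\SS[\CC^{\times}]\to K^{\cn}(\CC)$ is the adjoint of the $\GL_{1}$-inclusion $B\CC^{\times}\simeq\CC\PP^{\infty}\to\Omega^{\infty}K^{\cn}(\CC)$, which classifies the tautological complex line bundle. Under the identification with $\tau_{\geq0}\ku$, its restriction to $S^{2}\simeq\CC\PP^{1}$ represents $[\mathcal{O}(1)]-1\in\widetilde{\ku}^{0}(\CC\PP^{1})$, the classical generator of $\pi_{2}\ku$. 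This handles \cref{i:beta_c}.

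For \cref{i:beta_r}, the parallel analysis with $\HH^{\times}\simeq S^{3}$ identifies $\beta_{\HH}$ with the reduced class of the tautological quaternionic line bundle $L_{\HH}$ over $\HH\PP^{1}=S^{4}$, a generator of $\pi_{4}\ksp\simeq\ZZ$ by quaternionic Bott periodicity. Then $\beta_{\RR}$ is the external product $S^{8}\xrightarrow{\beta_{\HH}\wedge\beta_{\HH}}K(\HH)\wedge K(\HH)\to K(\HH\otimes_{\RR}\HH)\simeq K(\RR)$, realized geometrically as the class of $p_{1}^{*}L_{\HH}\otimes_{\RR}p_{2}^{*}L_{\HH}$ over $S^{4}\times S^{4}$, viewed as a $\Mat_{4}(\RR)$-bundle via the Morita equivalence and projected to the top cell $S^{8}$. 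That this is a generator of $\pi_{8}\ko\simeq\ZZ$ is the classical statement of real Bott periodicity.

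The main obstacle is precisely this last normalization: confirming that the quaternionic Bott square yields a generator rather than a nontrivial integer multiple thereof. This requires careful bookkeeping of the rank factors introduced by the Morita equivalence $\HH\otimes_{\RR}\HH\simeq\Mat_{4}(\RR)$ (whose simple module has real dimension $4$) and their interaction with complexification (where a rank-$1$ $\HH$-module has rank $2$ over $\CC$). The cleanest route is perhaps to apply complexification $c\colon\ko\to\ku$, which is an isomorphism on $\pi_{8k}$ for $k\geq0$, and to match $c(\beta_{\RR})$ with $\pm\beta_{\ku}^{4}$ via the induced pairing on $\ku$. By contrast, the corresponding identifications of $\beta_{\CC}$ and $\beta_{\HH}$ are direct once the geometric meaning of the abstract condensed maps has been made explicit.
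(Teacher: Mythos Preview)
Your plan is correct and coincides with the paper's argument: part~\cref{i:beta_c} is exactly the observation that \(\GL_{1}(\CC)^{\dis}\to\GL_{\infty}(\CC)^{\dis}\) is a \(\pi_{1}\)-isomorphism, and your ``cleanest route'' for part~\cref{i:beta_r}---complexify and match with \(\pm\beta_{\CC}^{4}\)---is precisely what the paper does (as a separate lemma). The only thing the paper adds beyond your outline is the concrete execution of that last step: rather than tracking Morita rank factors directly, it base-changes \(\HH\otimes_{\RR}\HH\to\Mat_{4}(\CC)\) along \(\RR\to\CC\) to reduce to showing that \(\HH\to\HH\otimes_{\RR}\CC\simeq\Mat_{2}(\CC)\) sends \(\beta_{\HH}\) to \(\pm\beta_{\CC}^{2}\), which follows from the fact that \((\HH^{\times})^{\dis}\simeq S^{3}\to\GL_{2}(\CC)^{\dis}\to\GL_{\infty}(\CC)^{\dis}\) induces isomorphisms on~\(\pi_{3}\)---this sidesteps the normalization bookkeeping you flagged as the main obstacle.
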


\begin{proof}
First, \cref{i:beta_c}
  follows from the fact
  that the map of animas \(\GL_{1}(\CC)^{\dis}\to{\GL_{\infty}(\CC)^{\dis}}\)
  induces an isomorphism on~\(\pi_{1}\).
  Then \cref{i:beta_r}
  follows from \cref{i:beta_c} and \cref{betas} below.
\end{proof}

\begin{lemma}\label{betas}
  The image of~\(\beta_{\RR}\)
  under the canonical map
  \(\pi_{8}(K^{\cn}(\RR)^{\sol})
  \to
  \pi_{8}(K^{\cn}(\CC)^{\sol})\)
  is \(\pm\beta_{\CC}^{4}\).
\end{lemma}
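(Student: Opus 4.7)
The plan is to combine the naturality of the $K$-theoretic pairing defining $\beta_{\RR}$ with a direct computation of the image of $\beta_{\HH}$ under complexification. Let $f\colon K^{\cn}(\HH)^{\sol}\to K^{\cn}(\CC)^{\sol}$ denote the composite of base change along $\HH\to\HH\otimes_{\RR}\CC\simeq\Mat_{2}(\CC)$ with the Morita equivalence $K^{\cn}(\Mat_{2}(\CC))\simeq K^{\cn}(\CC)$.

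The first step is to identify $f(\beta_{\HH})\in\pi_{4}K^{\cn}(\CC)^{\sol}\simeq\pi_{4}\ku\simeq\ZZ$. The polar decomposition $\HH^{\times}\simeq SU(2)\times\RR_{>0}$ combined with \cref{contract} gives $(\HH^{\times})^{\dis}\simeq S^{3}$, so the splitting $\Sigma\SS[\HH^{\times}]^{\sol}\simeq\Sigma\SS\oplus\Sigma^{4}\SS$ appearing in \cref{xu4fsz} displays $\beta_{\HH}$ as the image of the generator of $\pi_{3}(S^{3})$ along $B\HH^{\times}\to\Omega^{\infty}K^{\cn}(\HH)^{\sol}$. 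The map $\HH^{\times}\to\GL_{2}(\CC)$ is homotopic on underlying animas to the standard inclusion $SU(2)\hookrightarrow U(2)$, inducing an isomorphism on $\pi_{3}$. Stabilizing to $U\simeq\GL_{\infty}(\CC)^{\dis}$ and using \cref{com_ban} to identify $K^{\cn}(\CC)^{\sol}\simeq\ku$, the generator of $\pi_{3}U\simeq\pi_{4}\ku$ is $\pm\beta_{\CC}^{2}$ (verified by running the parallel construction for $\beta_{\CC}$ via $\SS[\CC^{\times}]^{\sol}\simeq\SS\oplus\Sigma\SS$ and using multiplicativity in $\ku$). Hence $f(\beta_{\HH})=\pm\beta_{\CC}^{2}$.

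The second step uses naturality of the multiplicative pairing $K^{\cn}(A)\otimes K^{\cn}(B)\to K^{\cn}(A\otimes_{\RR}B)$ under base change to obtain a commutative square
\[
\begin{tikzcd}
K^{\cn}(\HH)^{\sol}\otimes K^{\cn}(\HH)^{\sol}\ar[r,"\text{pair}"]\ar[d,"f\otimes f"'] & K^{\cn}(\RR)^{\sol}\ar[d,"f"]\\
K^{\cn}(\CC)^{\sol}\otimes K^{\cn}(\CC)^{\sol}\ar[r,"\mu"'] & K^{\cn}(\CC)^{\sol}
\end{tikzcd}
\]
where $\mu$ is the ring multiplication on $K^{\cn}(\CC)^{\sol}\simeq\ku$. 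This rests on the $\RR$-algebra decomposition $(\HH\otimes_{\RR}\CC)\otimes_{\RR}(\HH\otimes_{\RR}\CC)\simeq\Mat_{4}(\CC)\oplus\Mat_{4}(\CC)$ arising from $\CC\otimes_{\RR}\CC\simeq\CC\oplus\CC$, together with the observation that projection onto either summand via the multiplication $\CC\otimes_{\RR}\CC\to\CC$ realizes the $\CC$-linear tensor product $\Mat_{2}(\CC)\otimes_{\CC}\Mat_{2}(\CC)\simeq\Mat_{4}(\CC)$, itself Morita-equivalent to $\CC$. Evaluating the diagram on $\beta_{\HH}\otimes\beta_{\HH}$, the upper-right composite yields $f(\beta_{\RR})$, while the lower-left composite yields $(\pm\beta_{\CC}^{2})\cdot(\pm\beta_{\CC}^{2})=\pm\beta_{\CC}^{4}$, giving the required equality.

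The main obstacle I expect is the compatibility check in Step 2: one must verify carefully that after complexification and Morita equivalence, the $\RR$-linear pairing genuinely produces the standard ring multiplication on $\ku$ rather than, say, twice that multiplication (which could result from failing to project from $\CC\otimes_{\RR}\CC\simeq\CC\oplus\CC$ onto a single summand). Step 1 is conceptually more straightforward but still requires a careful unwinding of the definition of $\beta_{\HH}$ through the splitting $\Sigma\SS[\HH^{\times}]^{\sol}\simeq\Sigma\SS\oplus\Sigma^{4}\SS$ and the identification of the resulting stable map $\Sigma^{4}\SS\to K^{\cn}(\HH)^{\sol}\to K^{\cn}(\CC)^{\sol}\simeq\ku$ with the Bott-squared class.
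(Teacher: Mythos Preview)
Your proposal is correct and follows essentially the same route as the paper: reduce to showing that complexification sends \(\beta_{\HH}\) to \(\pm\beta_{\CC}^{2}\) (your Step~1, argued via \(\pi_{3}\) of the unit groups exactly as the paper does), and then propagate this through the multiplicative pairing (your Step~2). The only difference is that the paper streamlines Step~2 by working directly with the base change \(\Mat_{4}(\RR)\simeq\HH\otimes_{\RR}\HH\to(\HH\otimes_{\RR}\CC)\otimes_{\CC}(\HH\otimes_{\RR}\CC)\simeq\Mat_{4}(\CC)\), i.e., tensoring over~\(\CC\) from the outset, which sidesteps the \(\CC\otimes_{\RR}\CC\simeq\CC\oplus\CC\) splitting and the projection step you flagged; also note that the right vertical arrow in your square should not be called~\(f\), since you reserved that name for the map out of \(K^{\cn}(\HH)^{\sol}\).
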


\begin{proof}
By considering the base change
  \begin{equation*}
    \Mat_{4}(\RR)
    \simeq
    \HH\otimes_{\RR}\HH
    \to
    (\HH\otimes_{\RR}\CC)\otimes_{\CC}(\HH\otimes_{\RR}\CC)
    \simeq
    \Mat_{4}(\CC),
  \end{equation*}
  it is reduced to show that
  the map
  \(\HH\to\HH\otimes_{\RR}\CC\simeq\Mat_2(\CC)\)
  carries \(\beta_{\HH}\) to \(\pm\beta_{\CC}^{2}\)
  on \(\pi_{4}(K^{\cn}(\X)^{\sol})\).
  This follows from the fact
  that the maps of animas \((\HH^{\times})^{\dis}
  \to\GL_{2}(\CC)^{\dis}\to{\GL_{\infty}(\CC)^{\dis}}\)
  induce isomorphisms on~\(\pi_{3}\).
\end{proof}

By using~\cite[Theorem~5.14]{k-ros-1},
which implies that \(K(\RR)\) is connective
as a condensed spectrum,
we see that
we can also invert the Bott element
after solidifying the nonconnective \(K\)-theory:

\begin{theorem}\label{cb}
  For a real Banach algebra~\(A\),
  the condensed spectrum
  \(K(A)^{\sol}[\beta_{\RR}^{-1}]\)
  is discrete and canonically equivalent to \(K^{\top}(A)\).
\end{theorem}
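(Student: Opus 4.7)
The plan is to bootstrap from the connective case handled in Proposition~\ref{x9ac0m} using Bott periodicity, with the bridge supplied by the connectivity input of~\cite[Theorem~5.14]{k-ros-1}. By Proposition~\ref{x9ac0m} together with Theorem~\ref{dis_ban}, $K^{\cn}(A)^{\sol}$ is discrete and canonically identified with $\tau_{\geq 0} K^{\top}(A)$. Real topological \(K\)-theory is $\beta_{\RR}$-periodic of period~\(8\), so the connective cover $\tau_{\geq 0} K^{\top}(A) \to K^{\top}(A)$ becomes an equivalence after inverting $\beta_{\RR}$. Hence $K^{\cn}(A)^{\sol}[\beta_{\RR}^{-1}] \simeq K^{\top}(A)$ canonically, and it remains to compare the solidifications of the connective and nonconnective \(K\)-theories after Bott inversion.

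Concretely, I would identify $K^{\cn}(A)^{\sol}[\beta_{\RR}^{-1}]$ with $K(A)^{\sol}[\beta_{\RR}^{-1}]$ by showing that the cofiber $C^{\sol}[\beta_{\RR}^{-1}]$ vanishes, where $C = \tau_{<0}K(A)$ is the nonconnective tail viewed as a condensed spectrum. The cited~\cite[Theorem~5.14]{k-ros-1} handles $A = \RR$ outright: it gives $K(\RR)$ connective as a condensed spectrum, i.e.\ $C(\RR) = 0$. For a general real Banach algebra $A$, I would extend this to the statement that $K(A)$ is connective as a condensed spectrum, i.e.\ $K_n(\Cls{C}(S;A)) = 0$ for $n<0$ and every extremally disconnected compactum~$S$. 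The natural approach exploits the extensive idempotent structure of the Stonean algebra $\Cls{C}(S;\RR)$ together with the $\RR$-algebra structure on $A$, reducing the negative \(K\)-theory of $\Cls{C}(S;A)$ to the case $A=\RR$ by a Bass-style devissage applied coefficientwise. Once this is in hand, $K^{\cn}(A) \to K(A)$ is already an equivalence of condensed spectra, and solidification followed by Bott inversion gives the desired identification.

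Discreteness of the resulting object is immediate: by Theorem~\ref{dis_ban}, $K^{\cn}(A)^{\sol}$ is discrete, and Bott inversion realizes $K^{\cn}(A)^{\sol}[\beta_{\RR}^{-1}]$ as a sequential colimit of shifts of this discrete object; such a colimit remains discrete by Example~\ref{dis_colim}.

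The hard part will be the extension step: transferring the connectivity of $K(\RR)$ as a condensed spectrum to that of $K(A)$ for an arbitrary real Banach algebra~$A$. If this direct route proves too strong, a fallback is to argue only the weaker statement $C^{\sol}[\beta_{\RR}^{-1}] = 0$ by working with the $K(\RR)^{\sol}$-module structure on $K(A)^{\sol}$: since $K(\RR)^{\sol}$ is already discrete and $\beta_{\RR}$ lives there, Bott inversion of the cofiber is controlled by its behaviour over $\RR$, where it vanishes.
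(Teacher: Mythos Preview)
Your primary route---showing \(K(A)\) is connective as a condensed spectrum for every real Banach algebra~\(A\)---is not available. This would in particular force \(K_{n}(A)=0\) for all \(n<0\), which is unknown (indeed, the paper's \cref{yos} only asks for discreteness in negative degrees, not vanishing), and the proposed ``Bass-style devissage applied coefficientwise'' is not an argument: there is no mechanism that reduces \(K_{-n}(\Cls{C}(S;A))\) to \(K_{-n}(\Cls{C}(S;\RR))\) from the \(\RR\)-algebra structure alone. So this branch should be discarded.

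Your fallback, however, is exactly the paper's approach and is correct once made precise. The paper packages it as \cref{x59vdq}: for any condensed \(K(\RR)\)-module~\(M\), the map \(\tau_{\geq n}M\to M\) becomes an equivalence after \((\X)^{\sol}[\beta_{\RR}^{-1}]\). The role of~\cite[Theorem~5.14]{k-ros-1} is not to kill anything directly, but to guarantee that \(K(\RR)\) is connective as a condensed spectrum, so that each truncation \(\tau_{\geq n}M\) and each homotopy group \(\pi_{k}M\) is again a \(K(\RR)\)-module. One then filters the cofiber by its homotopy groups: on a \emph{static} condensed \(K(\RR)\)-module~\(N\), the action of \(K(\RR)^{\sol}\simeq\ko\) on \(N^{\sol}\) factors through \(\pi_{0}\ko\simeq\ZZ\), and since \(\beta_{\RR}\) dies under \(\ko\to\ZZ\), it acts by zero on \(N^{\sol}\). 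Hence \(N^{\sol}[\beta_{\RR}^{-1}]=0\) for each layer, and the cofiber vanishes after Bott inversion. Applying this with \(M=K(A)\) gives \(K(A)^{\sol}[\beta_{\RR}^{-1}]\simeq K^{\cn}(A)^{\sol}[\beta_{\RR}^{-1}]\simeq K^{\top}(A)\), as you outlined.
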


\begin{proof}
  This follows from
  \(K^{\top}(A)\simeq(K^{\cn}(A)^{\sol})[\beta_{\RR}^{-1}]\)
  and
  \cref{x59vdq} below.
\end{proof}

\begin{lemma}\label{x59vdq}
  Let \(M\) be a condensed \(K(\RR)\)-module.
  Since \(K(\RR)\) is connective by~\cite[Theorem~5.14]{k-ros-1},
  there is a map of \(K(\RR)\)-modules
  \(\tau_{\geq n}M\to M\) for \(n\in\ZZ\).
  This map is an equivalence
  after applying \((\X)^{\sol}[\beta_{\RR}^{-1}]\).
\end{lemma}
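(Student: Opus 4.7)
The plan is to reduce to proving $(\tau_{<n}M)^{\sol}[\beta_\RR^{-1}]\simeq0$. Since $K(\RR)$ is connective, the truncations $\tau_{\geq n}M$ and $\tau_{<n}M$ inherit condensed $K(\RR)$-module structures, so the cofiber sequence $\tau_{\geq n}M\to M\to\tau_{<n}M$ is one of $K(\RR)$-modules. Solidification (a left adjoint) and inversion of $\beta_\RR$ (the sequential colimit along $\beta_\RR\colon\Sigma^{8}(\X)\to(\X)$) both preserve cofibers, so this reduction is legitimate and the Bott element acts on the cofiber through the induced $K(\RR)^{\sol}$-module structure.

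The first step I would carry out is to show that $(\tau_{<n}M)^{\sol}$ remains bounded above in degrees $<n$. This is an instance of the t-exactness of the inclusion $\Cat{SolSp}\hookrightarrow\Cat{ConSp}$: the $t$-structure on $\Cat{SolSp}$ is induced from that on $\Cat{ConSp}$ (as reflected in \cref{xdjyu0}), so the inclusion is t-exact, and by adjunction its left adjoint $(\X)^{\sol}$ is right-t-exact, sending $\Cat{ConSp}_{<n}$ into $\Cat{SolSp}_{<n}$.

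Next I would invoke the standard periodicity observation that inverting a positive-degree element kills bounded-above modules. Using that the condensed homotopy sheaf $\pi_{i}$ commutes with sequential colimits in $\Cat{ConSp}$ (a consequence of the topology on $\Cat{EDis}$ admitting only finite covers, so sheafification preserves filtered colimits),
\[
  \pi_{i}\bigl((\tau_{<n}M)^{\sol}[\beta_\RR^{-1}]\bigr)
  \simeq
  \injlim_{k}\pi_{i+8k}\bigl((\tau_{<n}M)^{\sol}\bigr).
\]
For $k$ large enough that $i+8k\geq n$, the terms vanish by boundedness, so the colimit is zero. Since $\Cat{ConSp}$ is hypercomplete by definition, the vanishing of all homotopy sheaves gives $(\tau_{<n}M)^{\sol}[\beta_\RR^{-1}]\simeq0$, concluding the argument.

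The step that needs genuine input rather than formal manipulation is the t-exactness of $\Cat{SolSp}\hookrightarrow\Cat{ConSp}$ (equivalently, that the solidification of a condensed abelian group is concentrated in degree zero); once that is in hand, the rest of the argument is a formal periodicity computation.
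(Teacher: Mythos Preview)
Your reduction to showing \((\tau_{<n}M)^{\sol}[\beta_{\RR}^{-1}]\simeq0\) is fine, but the second step contains a genuine error. You write that the left adjoint \((\X)^{\sol}\) is ``right-t-exact, sending \(\Cat{ConSp}_{<n}\) into \(\Cat{SolSp}_{<n}\)''. These two clauses are inconsistent: right t-exactness means preserving \emph{connective} objects (\(\Cat{ConSp}_{\geq n}\to\Cat{SolSp}_{\geq n}\)), not coconnective ones. The adjunction argument you give (t-exact right adjoint \(\Rightarrow\) right t-exact left adjoint) only yields the connective-preserving statement. For your periodicity step you need the opposite direction, and that is false. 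A clean counterexample inside the class of \(K(\RR)\)-modules: the condensed abelian group \(\RR/\ZZ\) is a static \(K(\RR)\)-module via \(K(\RR)\to\pi_{0}K(\RR)\simeq\ZZ\), but the cofiber sequence \(\ZZ\to\RR\to\RR/\ZZ\) together with \(\RR^{\sol}\simeq0\) (\cref{real_sol}) gives \((\RR/\ZZ)^{\sol}\simeq\Sigma\ZZ\), which lies in degree~\(1\). So \((\tau_{<n}M)^{\sol}\) need not be bounded above, and your colimit argument for \(\pi_{i}\bigl((\tau_{<n}M)^{\sol}[\beta_{\RR}^{-1}]\bigr)\) does not terminate.

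The paper's proof sidesteps this entirely. It writes \(\tau_{<n}M\) as a filtered colimit of bounded pieces, reducing (since \((\X)^{\sol}[\beta_{\RR}^{-1}]\) preserves colimits) to the case of a \emph{static} \(K(\RR)\)-module~\(M\). For such~\(M\) the \(K(\RR)\)-action already factors through \(\pi_{0}K(\RR)\), hence after solidification the \(\ko\simeq K(\RR)^{\sol}\)-action on \(M^{\sol}\) factors through \((\pi_{0}K(\RR))^{\sol}\simeq\ZZ\). Since \(\beta_{\RR}\in\pi_{8}\ko\) dies under \(\ko\to\ZZ\), the map \(\beta_{\RR}\colon\Sigma^{8}M^{\sol}\to M^{\sol}\) is null, and inverting it gives zero. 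No boundedness of \(M^{\sol}\) is ever invoked.
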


\begin{proof}
  By \(\injlim_{n}\tau_{\geq n}M\simeq M\),
  it suffices to show that
  when a \(K(\RR)\)-module~\(M\)
  is static,
  \(M^{\sol}[\beta_{\RR}^{-1}]\) vanishes.
  We can see that
  \(\beta_{\RR}\colon\Sigma^{8}M^{\sol}\to M^{\sol}\) is null:
  On~\(M^{\sol}\),
  the action of \(K(\RR)^{\sol}\simeq{\ko}\)
  factors through that of \((\pi_{0}K(\RR))^{\sol}\simeq\ZZ^{\sol}\simeq\ZZ\).
  Since \(\beta_{\RR}\) is killed
  by \({\ko}\to\ZZ\),
  the desired result follows.
\end{proof}

\begin{remark}\label{x57guz}
Instead of picking an element,
  we can get~\(K^{\top}\) via \(\KU\)-localization.
  From an argument similar to the proof of \cref{cb},
  we see that
  \(L_{\KU}(K(A)^{\sol})\)
  is discrete and is equivalent to \(K^{\top}(A)\)
  for a real Banach algebra~\(A\).
\end{remark}

\section{Question: discreteness before solidification}\label{s:ros}

Recall that
a \emph{normed ring}
is a ring with a function \(\lVert\X\rVert\colon A\to[0,\infty)\)
where
\(\lVert x\rVert=0\) if and only if \(x=0\),
\(\lVert -x\rVert=\lVert x\rVert\),
\(\lVert x+y\rVert\leq\lVert x\rVert+\lVert y\rVert\),
\(\lVert 1\rVert\leq1\), and
\(\lVert xy\rVert\leq\lVert x\rVert\lVert y\rVert\).
We say that it is a \emph{Banach ring}
if it is complete.
We consider the following:

\begin{conjecture}\label{yos}
  Let \(A\) be a Banach ring.
  \begin{enumerate}
    \item\label{i:yos_pos}
      For \(n>0\)
      that is invertible in~\(A\),
      the condensed abelian group
      \(K_{*}(A;\ZZ/n)\) is discrete
      for~\(*\geq0\).
    \item\label{i:yos_neg}
      The condensed abelian group
      \(K_{*}(A)\) is discrete
      for \({*}\leq0\).
  \end{enumerate}
\end{conjecture}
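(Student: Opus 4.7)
The plan is to reduce the two parts of the conjecture to low-degree cases via the fundamental theorem of $K$-theory, following the pattern in the proof of \cref{dis_alg}. For part~\cref{i:yos_neg}, the Bass delooping expresses $K_{-n}(A)$ as a direct summand of a $K_0$-group of an iterated Banach Laurent extension $A\langle T_{1}^{\pm1},\ldots,T_{n}^{\pm1}\rangle$; since Banach Laurent extensions preserve the class of Banach rings, and discrete condensed abelian groups are closed under passage to kernels and cokernels, the problem reduces to proving that $K_{0}(A)$ is discrete for every Banach ring~$A$. For part~\cref{i:yos_pos}, the universal coefficient sequence combined with the same reduction shows that it is enough to handle $K_{*}(A;\ZZ/n)$ for $*=0,1$.

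For the discreteness of $K_0(A)$, by \cref{dis_pf} it suffices to prove that for a profinite set $S=\projlim_i S_i$, the tautological map $\injlim_i K_{0}(\Cls{C}(S_i;A))\to K_{0}(\Cls{C}(S;A))$ is an isomorphism. When $S_i$ is finite, $\Cls{C}(S_i;A)=A^{S_i}$, so that the source identifies with locally constant functions $S\to K_{0}(A)$. The claim thus becomes the assertion that every finitely generated projective $\Cls{C}(S;A)$-module is locally free of constant rank on some clopen partition of~$S$---a Serre--Swan type result provable via a spectral projection argument: an idempotent $e\in\Mat_r(\Cls{C}(S;A))$ gives a continuous rank function $S\to\NN$ which is locally constant because $S$ is totally disconnected, and then one aligns~$e$ with the pullback of an idempotent in $\Mat_r(A)$ on each clopen piece.

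For part~\cref{i:yos_pos}, after the above reduction, the main tool should be Suslin--Gabber rigidity: for $n$ invertible in~$A$ and an extremally disconnected~$S$ with a point~$x\in S$, the evaluation map $K_{*}(\Cls{C}(S;A);\ZZ/n)\to K_{*}(A;\ZZ/n)$, combined with the clopen-neighborhood colimit description of \cref{xd9foy}, should assemble to an isomorphism after passing to the colimit. This is morally a rigidity statement for the pair $(\Cls{C}(S;A),I_x)$ with $I_x=\ker(\Cls{C}(S;A)\to A)$, which should enjoy good henselian behavior when $S$ is extremally disconnected, so that Suslin rigidity delivers the desired identification.

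The hardest step is this rigidity argument in part~\cref{i:yos_pos}: classical Gabber rigidity is formulated for commutative henselian pairs, and extending it to noncommutative Banach rings within the condensed framework requires genuinely new input---one must verify a suitable henselian property for $\Cls{C}(S;A)$ with~$A$ arbitrary Banach, and track it through the mod-$n$ $K$-theory spectrum rather than just its homotopy groups. By contrast, the $K_0$ step of part~\cref{i:yos_neg} is essentially a projective-module computation and looks tractable; this is plausibly the ``easiest case'' that \cref{s:ros} demonstrates.
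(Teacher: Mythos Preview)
The statement you are attempting to prove is \emph{Conjecture}~\ref{yos}, which the paper explicitly leaves open. The paper proves only the single case $*=0$ of part~\cref{i:yos_neg} (via \cref{kap_dis}), notes that part~\cref{i:yos_pos} follows from Gabber rigidity in the commutative case (Remark~\ref{xuopy6}), and announces the case $*=-1$ of part~\cref{i:yos_neg} for a future paper while warning that a ``holomorphic version'' already fails for $*\leq-2$ (Remark~\ref{xjx47v}). There is therefore no ``paper's own proof'' to compare against beyond the $*=0$ fragment.

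Your reduction strategy for part~\cref{i:yos_neg} has a genuine gap. Bass delooping expresses $K_{-n}(A)$ in terms of $K_{0}$ of the \emph{algebraic} Laurent extension $A[T_{1}^{\pm1},\dots,T_{n}^{\pm1}]$, not of any Banach completion $A\langle T_{1}^{\pm1},\dots,T_{n}^{\pm1}\rangle$. The algebraic Laurent ring over a Banach ring is not Banach, so you cannot feed it back into the conjecture's hypothesis; and the fundamental theorem does not apply to the completed version. This is precisely the obstruction the paper recalls in Remark~\ref{x9fui6}: already $\NK_{0}$ fails to be $[0,1]$-homotopy invariant for Banach algebras, so polynomial-type reductions break down here. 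The analogous descent in the proof of \cref{dis_alg} works only because the class ``base change of a discrete $\RR^{\delta}$-algebra along $\RR^{\delta}\to\RR$'' is closed under algebraic Laurent extension; the class of Banach rings is not. The same issue undermines your reduction for part~\cref{i:yos_pos}.

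For the $K_{0}$ step itself, your Serre--Swan sketch is in the right spirit but misuses \cref{dis_pf}: that proposition gives the implication discrete $\Rightarrow$ colimit formula, not a criterion for discreteness; the criterion the paper actually uses is \cref{dis}. The paper's argument for this case (\cref{kap_dis}) is different and cleaner: Banach rings are Kaplansky (\cref{xjruob}), so the germ kernel at each ultrafilter is a radical ideal (\cref{xogmh9}), whence $K_{0}$ injects along the germ map (\cref{xijy8l}) and discreteness follows from \cref{dis}. Your identification of noncommutative rigidity as the essential obstruction in part~\cref{i:yos_pos} agrees with the paper's assessment in Remark~\ref{xuopy6}.
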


As a piece of evidence,
in \cref{ss:kaplansky},
we show that \cref{i:yos_neg}
of \cref{yos}
is true for \({*}=0\)
using a standard argument.
Several remarks are in order:

\begin{remark}\label{xuopy6}
As was explained in~\cite[Session~11]{Masterclass},
  as a consequence of Gabber rigidity,
  we see that \cref{i:yos_pos} of \cref{yos}
  is true in the commutative case.
  Therefore,
  we get the commutative case of~\cref{i:ros_pos} of \cref{ros},
  which was originally due to
  Prasolov~\cite{Prasolov84}
  and Fischer~\cite{Fischer90}.

  In light of this,
  this discreteness conjecture in positive degrees
  can be interpreted as
  asking for some rigidity statement of \(K^{\cn}\)
  for noncommutative algebras.
\end{remark}

\begin{remark}\label{xjx47v}
  We will prove \cref{i:yos_neg} of \cref{yos} for \({*}=-1\)
  in~\cite{k-ros-3}.
  We also prove that its ``holomorphic version''
  of \cref{i:yos_neg} for \({*}\leq-2\)
  is false in~\cite{k-ros-3}.
  This gives a (somewhat weak) negative evidence
  to this discreteness conjecture in negative degrees.
\end{remark}

\begin{remark}\label{x9fui6}
On~\cite[page~85]{Rosenberg97},
  Rosenberg explicitly stated
  that \cref{i:ros_neg} of \cref{ros},
  even for \({*}=-1\), should be
  considered specific to C*-algebras rather than real Banach algebras.
  This claim is based on the fact that
  \(\NK_{0}(\X)=\coker(K_{0}(\X)\to K_{0}(\X[T]))\)
  is not \([0,1]\)-homotopy invariant
  in the category of real Banach algebras;
  see~\cite[Example~2.8]{Rosenberg97}.
  However, this issue can be circumvented
  by avoiding the use of
  the fundamental theorem of \(K\)-theory;
  we will indeed prove the \([0,1]\)-homotopy invariance
  of~\(K_{-1}\)
  for all real Banach algebras in~\cite{k-ros-3}.
\end{remark}

What is nice about \cref{yos}
is that it is only about \(K\)-theory
of \(\Cls{C}(\beta I;A)\) for a set~\(I\),
which is ring-theoretically nicer than \(\Cls{C}([0,1];A)\);
e.g., see~\cite[Theorem~5.14]{k-ros-1} for \(A=\RR\).
Since \(\tau_{\geq0}K^{\top}(A)\)
for a real Banach algebra~\(A\)
is a discretization of~\(K^{\cn}(A)\)
by \cref{dis_ban},
we see that
\cref{i:yos_pos} of \cref{yos}
implies
\cref{i:ros_pos} of \cref{ros}.
We also prove that \cref{i:yos_neg} of \cref{yos}
implies \cref{i:ros_neg} of \cref{ros};
this is nontrivial
since \(K(\Cls{C}([0,1];A))\)
is not the value of the condensed spectrum \(K(A)\)
at \([0,1]\).
More precisely,
we prove the following implications
in \cref{ss:d_c}:

\begin{theorem}\label{yros}
  Let \(A\) be a real Banach algebra
  and \(n\geq1\) an integer.
  Assume that
  the condensed abelian groups
  \(K_{-n}(A)\),
  \(K_{-n+1}(A)\),
  and \(K_{-n}(\Cls{C}([0,1];A))\)
  are discrete.
  Then the tautological map
  \(K_{-n}(A)(*)\to K_{-n}(\Cls{C}([0,1];A))(*)\)
  of abelian groups
  is an isomorphism.
\end{theorem}

\subsection{Example: Kaplansky condensed rings}\label{ss:kaplansky}

We here show that many rings
have discrete~\(K_{0}\).
In particular, we see that \cref{i:yos_neg} of \cref{yos} holds
for \({*}=0\).
Formally, we consider the following class:

\begin{definition}\label{xrlgpv}
  We call a (static) condensed ring~\(A\) \emph{Kaplansky}
  if
  for any ultrafilter~\(\mu\) on any set~\(I\),
  the inclusion
  \(A^{\times}\hookrightarrow A\) has a right lifting property with respect to
  \(\{\mu\}\to(\beta I)^{\dag}_{\mu}\);
  i.e., if
  any element of \(\injlim_{\mu(J)=1}A(\beta J)\)
  whose image in \(A(\{\mu\})\)
  is invertible
  lifts to an invertible of \(A(\beta J)\)
  for some \(J\subset I\) with \(\mu(J)=1\).
\end{definition}

\begin{remark}\label{1437d7fff5}
  The terminology in \cref{xrlgpv} comes from
  Kaplansky's famous article~\cite{Kaplansky47},
  where he introduced the notion of a \emph{Q-ring},
  which is basically a topological ring such that \(A^{\times}\subset A\)
  is open\footnote{He considered
    the nonunital version of this notion
    and the letter ``Q'' stands for
    quasiregular elements.
  }.
\end{remark}

\begin{example}\label{xjruob}
  Consider a topological algebra~\(A\) that is Hausdorff.
  When \(A^{\times}\subset A\) is open,
  its associated condensed ring is Kaplansky.
  To see this,
  we observe that
  any open immersion \(U\hookrightarrow K\)
  to a compactum
  has a right lifting property with respect to
  \(\{\mu\}\to(\beta I)^{\dag}_{\mu}\).
  For example,
  any Banach ring is Kaplansky.
\end{example}

\begin{example}\label{b978381acf}
  The polynomial ring~\(\RR[T]\) is not Kaplansky;
  intuitively, this is because there is no neighborhood of~\(1\)
  consisting of invertible elements.
  Formally,
  we consider the map \([0,1]\to\RR[T]\)
  given by \(t\mapsto1+tT\).
  Then for any nonprincipal ultrafilter~\(\mu\) on~\([0,1]\)
  with limit~\(0\),
  we cannot lift this \(\{\mu\}\to\RR[T]^{\times}\)
  to \((\beta[0,1])_{\mu}^{\dag}\).
\end{example}

We prove the following:

\begin{theorem}\label{kap_dis}
  The condensed abelian group
  \(K_0(A)\) is discrete
  for any Kaplansky condensed ring~\(A\).
\end{theorem}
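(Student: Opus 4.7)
The plan is to apply \cref{dis} to the compactly generated \(\infty\)-category \(\Cat{Ab}\): since the value of the condensed abelian group \(K_{0}(A)\) on an extremally disconnected set \(S\) is just \(K_{0}(A(S))\), it suffices to show that for every ultrafilter \(\mu\) on every set \(I\), the stalk map
\[
\injlim_{\mu(J)=1} K_{0}(A(\beta J)) \longrightarrow K_{0}(A(*))
\]
is an isomorphism. Surjectivity is immediate: any idempotent \(e \in M_{n}(A(*))\) extends to the constant idempotent \(\tilde{e} \in M_{n}(A(\beta I))\) via the structure map \(\beta I \to *\), and \([\tilde{e}]\) restricts to \([e]\).

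For injectivity, I would first make the standard reduction: given idempotents \(\tilde{e}_{1}, \tilde{e}_{2} \in M_{n}(A(\beta J))\) defining the same class in \(K_{0}(A(*))\), after passing to a larger \(N\) by adding zero blocks we may assume the images \(e_{1}, e_{2} \in M_{N}(A(*))\) are conjugate by some \(u \in \GL_{N}(A(*))\); lifting \(u\) to a constant invertible matrix \(\tilde{u}_{0} \in \GL_{N}(A(\beta J))\) and replacing \(\tilde{e}_{1}\) by \(\tilde{u}_{0} \tilde{e}_{1} \tilde{u}_{0}^{-1}\), we reduce to the case where \(\tilde{e}_{1}(\mu) = \tilde{e}_{2}(\mu)\). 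Then I would use the rotation trick: set
\[
v := 1 - \tilde{e}_{1} - \tilde{e}_{2} + 2\tilde{e}_{2}\tilde{e}_{1} \in M_{N}(A(\beta J));
\]
a direct calculation using \(\tilde{e}_{i}^{2} = \tilde{e}_{i}\) gives \(v\tilde{e}_{1} = \tilde{e}_{2} v\) and \(v(\mu) = 1_{N}\), so everything is reduced to finding some \(J' \subset J\) with \(\mu(J') = 1\) on which \(v\) becomes invertible.

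This reduces the whole proof to the following lemma, which I expect to be the main obstacle: if \(w \in M_{N}(A(\beta J))\) satisfies \(w(\mu) = 1_{N}\), then \(w|_{\beta J'}\) is invertible in \(M_{N}(A(\beta J'))\) for some \(\mu(J') = 1\). I would prove this by induction on \(N\). The base case \(N = 1\) is exactly the Kaplansky hypothesis applied to an element with value \(1 \in A(*)^{\times}\). For \(N \geq 2\), the entry \(w_{11}\) satisfies \(w_{11}(\mu) = 1\) and hence, by Kaplansky, becomes invertible over some \(A(\beta J_{1})\); working there, I can perform the Schur decomposition so that invertibility of \(w\) reduces to invertibility of the Schur complement \(s \in M_{N-1}(A(\beta J_{1}))\), which still satisfies \(s(\mu) = 1_{N-1}\), and the inductive hypothesis applies. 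The substance of the argument lies in this propagation of the Kaplansky hypothesis—a statement purely about units of \(A\)—to a statement about matrices close to the identity, carried out through finitely many applications of Gaussian elimination over progressively smaller full-measure subsets of \(I\).
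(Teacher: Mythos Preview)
Your proof is correct and arrives at the same conclusion, but by a more explicit route than the paper's. The paper observes that the Kaplansky condition on~\(A\) is equivalent to the kernel of the stalk map \(A((\beta I)_{\mu}^{\dag})\to A(\{\mu\})\) being a \emph{radical} ideal (\cref{xogmh9}), and then invokes the standard Nakayama-type fact that \(K_{0}(A)\to K_{0}(A/I)\) is injective whenever \(I\) is radical (\cref{xijy8l}); surjectivity comes from the same splitting you use. Your argument unpacks this abstraction: the rotation element \(v\) together with your propagation lemma amounts to proving directly that \(\Mat_{N}\) of a Kaplansky ring is Kaplansky---a fact the paper records separately as \cref{5c9f6925c3}, again via radical ideals---and your Gaussian-elimination/Schur-complement induction is one of the standard proofs that \(\Mat_{N}(I)\) is radical for radical~\(I\). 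So the mathematical content is the same; the paper's packaging via radical ideals is a bit cleaner and makes the link to classical ring theory transparent, while your version is self-contained and shows concretely where the Kaplansky hypothesis enters. One small slip: to arrange that \(e_{1},e_{2}\) become conjugate in \(M_{N}(A(*))\) you generally need to add both identity blocks (to realize the stable isomorphism coming from \([e_{1}]=[e_{2}]\) in \(K_{0}\)) and zero blocks (to match the complementary summands), not zero blocks alone.
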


To prove this,
we digress and review
some facts about radical ideals:

\begin{definition}\label{14430daef5}
  A nonunital associative ring~\(I\) is called
  \emph{radical}
  if for any \(a\in I\)
  there is \(b\in I\) such that \(a+b+ab=0\).
  A two-sided ideal of an associative ring is called
  \emph{radical}\footnote{Beware that
    there is a different notion of radicality
    in commutative algebra.
  } if it is radical regarded
  as a nonunital ring.
\end{definition}

\begin{example}\label{x7jbbx}
Let \(A\to B\) be a surjection of rings.
  Then \(\ker(A\to B)\) is radical if and only if
  \begin{equation*}
    \begin{tikzcd}
      A^{\times}\ar[r,hook]\ar[d]&
      A\ar[d]\\
      B^{\times}\ar[r,hook]&
      B
    \end{tikzcd}
  \end{equation*}
  is cartesian.
\end{example}

The notion of Kaplansky ring is
related to that of radicality as follows:

\begin{proposition}\label{4546bdb5d6}
  \(I\subset A\) is a radical ideal.
  Then
  finitely generated projective \(A\)-modules~\(P\)
  and~\(P'\)
  are isomorphic
  if and only if so are its base changes to \(A/I\).
\end{proposition}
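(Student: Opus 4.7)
The ``only if'' direction is immediate, so the task is to prove the ``if'' direction, namely to upgrade an isomorphism $\bar\phi\colon P/IP \xrightarrow{\sim} P'/IP'$ to an isomorphism $P \to P'$. The strategy is the classical one: lift $\bar\phi$ to a map $\phi\colon P \to P'$ using projectivity of $P$, and then show $\phi$ is itself an isomorphism by applying a suitable Nakayama lemma twice.

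The foundational lemma to establish is that $1 + a$ is a two-sided unit for every $a \in I$. By definition there exists $b \in I$ with $(1 + a)(1 + b) = 1$; applying the same to $b$ yields $c \in I$ with $(1 + b)(1 + c) = 1$, and then associativity forces $1 + c = 1 + a$, so $(1 + b)(1 + a) = 1$ as well. From this one obtains the noncommutative Nakayama statement: a finitely generated $A$-module $M$ with $IM = M$ must vanish, since given a minimal generating set $m_1, \ldots, m_n$, writing $m_n = \sum_j a_j m_j$ with $a_j \in I$ allows us to solve $m_n = (1 - a_n)^{-1} \sum_{j < n} a_j m_j$, contradicting minimality unless $n = 0$.

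With these tools the rest is formal. Surjectivity of $\bar\phi$ gives $\phi(P) + IP' = P'$, so the finitely generated cokernel $M = P'/\phi(P)$ satisfies $IM = M$ and hence vanishes; thus $\phi$ is surjective. Projectivity of $P'$ splits $\phi$, yielding $P \cong P' \oplus K$ with $K = \ker\phi$ a finitely generated projective summand. Reducing modulo $I$ identifies $\bar\phi$ with the projection $(P'/IP') \oplus (K/IK) \to P'/IP'$, so the fact that $\bar\phi$ is an isomorphism forces $K/IK = 0$, and a second application of Nakayama gives $K = 0$, completing the proof. The main obstacle is precisely the noncommutative Nakayama step, which rests on upgrading the one-sided quasi-invertibility in the definition of radicality to a genuine two-sided statement.
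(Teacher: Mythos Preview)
Your proof is correct and follows essentially the same strategy as the paper: lift via projectivity of~\(P\), apply Nakayama to get surjectivity, split via projectivity of~\(P'\), then apply Nakayama once more to kill the kernel. The paper's version is terser, simply invoking Nakayama without spelling out the two-sided invertibility of \(1+a\) or the minimal-generators argument, but the underlying steps are identical.
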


\begin{proof}
  By the projectivity of~\(P\),
  we can lift
  \(P\to A/I\otimes_{A}P\simeq A/I\otimes_{A}P'\)
  to \(P\to P'\).
  Then by Nakayama,
  this is surjective.
  Then by the projectivity of~\(P'\),
  it has a section.
  Again by Nakayama,
  it is a surjection.
  Hence it is isomorphic.
\end{proof}

\begin{corollary}\label{xijy8l}
  When \(I\) is radical,
  then \(K_0(A)\to K_0(A/I)\) is injective.
\end{corollary}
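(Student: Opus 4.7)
The plan is to run the standard ``stably free'' argument, using Proposition~\ref{4546bdb5d6} as the sole nontrivial ingredient.

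Concretely, a class in $K_0(A)$ has the form $[P] - [Q]$ for finitely generated projective $A$-modules $P,Q$. Suppose its image in $K_0(A/I)$ vanishes. Then by the standard description of $K_0$ as the Grothendieck group of the monoid of isomorphism classes, there exists a finitely generated projective $A/I$-module $R$ with
\begin{equation*}
  (P \otimes_A A/I) \oplus R \;\cong\; (Q \otimes_A A/I) \oplus R.
\end{equation*}
Choose a complement $R'$ with $R \oplus R' \cong (A/I)^n$, and add $R'$ to both sides to obtain an isomorphism
\begin{equation*}
  (P \oplus A^n) \otimes_A A/I \;\cong\; (Q \oplus A^n) \otimes_A A/I
\end{equation*}
of $A/I$-modules.

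Now I would invoke Proposition~\ref{4546bdb5d6} applied to the finitely generated projective $A$-modules $P \oplus A^n$ and $Q \oplus A^n$: since their base changes to $A/I$ are isomorphic, they are already isomorphic over $A$. Hence $[P] + n[A] = [Q] + n[A]$ in $K_0(A)$, so $[P] = [Q]$, proving injectivity.

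There is no real obstacle here; the only thing to check is that we can absorb the auxiliary module $R$ into a free module, which is automatic because finitely generated projectives are summands of finitely generated free modules. All the substance has been absorbed into Proposition~\ref{4546bdb5d6}, which itself already used the radicality of $I$ via Nakayama's lemma.
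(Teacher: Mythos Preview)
Your proof is correct and matches the paper's intended argument: the corollary is stated without proof as an immediate consequence of Proposition~\ref{4546bdb5d6}, and what you have written is exactly the standard Grothendieck-group unwinding needed to deduce injectivity on $K_0$ from the isomorphism-reflection statement there.
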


\begin{proposition}\label{xogmh9}
  A condensed ring~\(A\) is Kaplansky
  if and only if \(\ker(A((\beta I)_{\mu}^{\dag})\to A(\{\mu\}))\)
  is radical
  for any ultrafilter~\(\mu\) on any set~\(I\).
\end{proposition}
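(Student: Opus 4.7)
The plan is to reduce both conditions to a single algebraic statement about the ring homomorphism
\begin{equation*}
  R := A\bigl((\beta I)_\mu^\dag\bigr) = \injlim_{\mu(J) = 1} A(\beta J) \longrightarrow A(\{\mu\}) = A(*),
\end{equation*}
namely, that the inclusion $R^\times \hookrightarrow R$ is the base change of $A(*)^\times \hookrightarrow A(*)$ along this homomorphism. By \cref{x7jbbx}, provided $R \to A(*)$ is surjective, this cartesian condition is exactly equivalent to $\ker(R \to A(*))$ being radical.

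First I would verify that $R \to A(*)$ is a split surjection. The composite of the canonical pro-maps $\{\mu\} \to (\beta I)_\mu^\dag \to *$ is the identity of $\{\mu\} \simeq *$, so applying the sheaf $A$ yields a ring section $A(*) \to R$; in particular $R \to A(*)$ is surjective, so \cref{x7jbbx} is applicable and reduces radicality of $\ker(R \to A(*))$ to the cartesian-square-of-units condition.

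Second, I would verify that the Kaplansky condition is exactly this cartesian-square condition. The key fact is $R^\times = \injlim_{\mu(J) = 1} A(\beta J)^\times$: given $a \in R$ and a candidate inverse $u \in R$, both can be represented in some common $A(\beta J_0)$, and each of the two equations $au = 1$, $ua = 1$ can be realized after further shrinking $J \subset J_0$ within the filter of $\mu$-large sets. With this identification, the Kaplansky condition reads exactly as ``any $a \in R$ whose image in $A(*)$ is invertible lies in $R^\times$'', which is the cartesian condition. Combining the two steps proves the proposition. The only point that requires a bit of care is the first, namely the identification of pro-objects $\{\mu\} \to (\beta I)_\mu^\dag \to *$ with $\{\mu\} \to *$ that underlies the splitting.
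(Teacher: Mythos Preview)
Your proposal is correct and follows essentially the same approach as the paper, which simply cites \cref{x7jbbx}; you have spelled out the details (surjectivity via the splitting, the identification \(R^{\times}=\injlim_{\mu(J)=1}A(\beta J)^{\times}\), and the translation of the lifting property into the cartesian-square condition) that the paper leaves implicit.
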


\begin{proof}
  This follows from \cref{x7jbbx}.
\end{proof}

\begin{proof}[Proof of \cref{kap_dis}]
  By \cref{dis},
  we have to show that
  \begin{equation*}
    K_{0}(A((\beta I)_{\mu}^{\dag}))
    \to
    K_{0}(A(\{\mu\}))
  \end{equation*}
  is bijective for any ultrafilter~\(\mu\) on any set~\(I\).
  Since it has a splitting,
  it suffices to show the injectivity,
  which follows from \cref{xijy8l,xogmh9}.
\end{proof}

For future reference,
we note the following analog of~\cite[Theorem~1]{Kaplansky47}:

\begin{corollary}\label{5c9f6925c3}
  If a condensed ring~\(A\) is Kaplansky,
  so is \(\Mat_{r}(A)\) for \(r\geq0\).
\end{corollary}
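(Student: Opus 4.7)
The plan is to translate the Kaplansky condition, via \cref{xogmh9}, into the radicality of a concrete kernel ideal. Explicitly, \(A\) is Kaplansky iff for every ultrafilter \(\mu\) on every set \(I\), the ideal \(J := \ker(A((\beta I)^{\dag}_{\mu}) \to A(\{\mu\}))\) in \(R := A((\beta I)^{\dag}_{\mu})\) is radical. Since the matrix construction \(\Mat_{r}(\X)\) is a finite product and therefore commutes with evaluation of condensed rings, the analogous kernel for \(\Mat_{r}(A)\) is exactly \(\Mat_{r}(J) \subset \Mat_{r}(R)\). Thus, again by \cref{xogmh9}, the statement reduces to the following purely algebraic claim: \emph{if \(J \subset R\) is a radical two-sided ideal of a unital ring, then \(\Mat_{r}(J) \subset \Mat_{r}(R)\) is radical.}

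To prove this, fix \(N \in \Mat_{r}(J)\) and seek \(N' \in \Mat_{r}(J)\) with \((I_{r}+N)(I_{r}+N') = I_{r}\). I view \(M = I_{r}+N\) as a left-\(R\)-module endomorphism of \(R^{r}\). Its reduction modulo \(J\) is the identity, so the cokernel \(C\) of \(M\) satisfies \(JC = C\). A Nakayama-style argument applies: choose a minimal generating set \(c_{1},\dotsc,c_{n}\) of \(C\); if \(n > 0\), write \(c_{n} = \sum_{i} a_{i} c_{i}\) with \(a_{i} \in J\), so that \((1 - a_{n}) c_{n} = \sum_{i<n} a_{i} c_{i}\). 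Radicality of \(J\) in \(R\) guarantees that \(1 - a_{n}\) is a two-sided unit (for \(a \in J\), radicality supplies \(b \in J\) with \((1-a)(1+b) = 1\), and then applying radicality to \(b\) yields \((1+b)(1-a) = 1\) as well). This expresses \(c_{n}\) as an \(R\)-combination of the other \(c_{i}\), contradicting minimality. Hence \(C = 0\), so \(M\) is surjective. Projectivity of \(R^{r}\) gives a right inverse \(M'\in \Mat_{r}(R)\), and since \(\overline{M} = I_{r}\) forces \(\overline{M'} = I_{r}\), we have \(M' = I_{r} + N'\) with \(N' \in \Mat_{r}(J)\). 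Applying the same argument to \(M'\) shows \(M'\) is surjective too, so it has a right inverse \(M''\); the identity \(M = M(M'M'') = (MM')M'' = M''\) then yields \(M'M = I_{r}\), so \(M'\) is a two-sided inverse.

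The key subtlety to avoid is circularity: the usual Nakayama lemma over a noncommutative ring, proved via the classical adjoint/determinant trick, implicitly requires the very matrix-radicality statement we are trying to establish. The minimal-generating-set argument sidesteps this because it only invokes the invertibility of \(1 - a\) for individual elements \(a \in J\), a property encoded directly by radicality of \(J\) in \(R\) and not in any matrix ring. Once this step is in place, the rest of the reduction is a formal consequence of \cref{xogmh9} together with the commutation of \(\Mat_{r}\) with the evaluation of condensed rings.
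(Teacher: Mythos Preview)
Your reduction via \cref{xogmh9} to the purely algebraic statement ``\(J\subset R\) radical implies \(\Mat_r(J)\subset\Mat_r(R)\) radical'' is exactly what the paper does, and your verification that \(\Mat_r\) commutes with evaluation at \((\beta I)^\dag_\mu\) is correct. The difference lies in how the algebraic statement is established: the paper reduces to \(r=2\) (presumably by an inductive block-decomposition \(\Mat_r\simeq\Mat_2(\Mat_{\lceil r/2\rceil})\) together with a bare-hands check for \(2\times2\) matrices), whereas you give a direct Nakayama argument valid for all \(r\) at once. Your argument is the standard proof of the noncommutative Nakayama lemma, so your caution about circularity is slightly overstated---the determinant/adjoint trick you allude to is specific to the commutative case, and the minimal-generating-set argument you wrote out \emph{is} the usual noncommutative proof. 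One small slip: if you want \(M\in\Mat_r(R)\) to act \(R\)-linearly on \(R^r\) by left multiplication, then \(R^r\) should be a \emph{right} \(R\)-module, and the Nakayama step should read \(c_n=\sum_i c_i a_i\) with scalars on the right; with that adjustment everything goes through. The paper's reduction to \(r=2\) is more elementary but less illuminating; your approach makes clear that the result is really just Nakayama for ideals contained in the Jacobson radical.
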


\begin{proof}
  By \cref{xogmh9},
  this follows from the fact
  that
  \(\Mat_{r}(I)\) is radical
  for a radical nonunital ring~\(I\),
  which is reduced to the case when \(r=2\).
\end{proof}

\subsection{From discreteness to homotopy invariance}\label{ss:d_c}

We recall the following from~\cite[Section~3]{k-ros-1}:

\begin{definition}\label{xqdxy6}
  A \emph{blowup square} of compacta is
a bicartesian square
  \begin{equation}
    \label{e:d0d49}
    \begin{tikzcd}
      Z'\ar[r,hook]\ar[d]&
      X'\ar[d]\\
      Z\ar[r,hook]&
      X
    \end{tikzcd}
  \end{equation}
  whose horizontal maps are closed embeddings.
\end{definition}

\begin{theorem}\label{x87vf0}
  Let \(E\colon\Cat{Cpt}^{\op}\to\Cat{Sp}\) be a functor.
  We write \(F\) for the functor
  given by \(F(\X)=E(\X\times[0,1])\).
  We assume the following:
  \begin{conenum}
    \item\label{i:aleph1}
      The functor~\(E\)
      preserves \(\aleph_{1}\)-filtered colimits.
    \item\label{i:add}
      For compacta~\(X_{1}\), \dots,~\(X_{n}\),
      the map
      \(E(X_{1}\amalg\dotsb\amalg X_{n})
      \to E(X_{1})\oplus\dotsb\oplus E(X_{n})\)
      is an equivalence.
    \item\label{i:n}
Any blowup square of compacta is carried by~\(E\)
      to a square
      whose total fiber has trivial~\(\pi_{0}\) and~\(\pi_{1}\).
    \item\label{i:dis_e}
      The condensation~\(\pi_{0}E^{\con}\)
      and \(\pi_{1}E^{\con}\) are discrete.
    \item\label{i:dis_f}
      The condensation~\(\pi_{0}F^{\con}\) is discrete.
  \end{conenum}
  Then the tautological map \(\pi_{0}E(*)\to\pi_{0}E([0,1])=\pi_{0}F(*)\)
  is an isomorphism.
\end{theorem}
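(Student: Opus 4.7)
The plan is to reduce the assertion to showing a canonical retraction is also a section, and then to establish this via the continuous map $m\colon [0,1]^2 \to [0,1]$, $(s,t) \mapsto \max(s,t)$. First, hypothesis~\cref{i:dis_e} combined with connectedness of $[0,1]$ and naturality of $E \to E^{\con}$ shows that, for any $t_0 \in [0,1]$, the evaluation $\operatorname{ev}_{t_0}\colon B_0 := \pi_0 E([0,1]) \to A_0 := \pi_0 E(*)$ coincides with the canonical map $\beta\colon B_0 \to A_0$ induced by $E \to E^{\con}$; in particular it is a retraction of the map $\alpha\colon A_0 \to B_0$ from the projection $[0,1] \to *$, so it suffices to show $\alpha\beta = \operatorname{id}_{B_0}$.

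The key step applies the same naturality reasoning to $F$. Write $\gamma_1(t_0)\colon \pi_0 E([0,1]^2) = \pi_0 F([0,1]) \to \pi_0 F(\{t_0\}) = B_0$ for the restriction induced by $\{t_0\} \times [0,1] \hookrightarrow [0,1]^2$. Naturality of $F \to F^{\con}$ at $\{t_0\} \hookrightarrow [0,1]$, together with the isomorphism $\pi_0 F(\{t_0\}) \xrightarrow{\sim} \pi_0 F^{\con}(\{t_0\})$ (as $\{t_0\}$ is extremally disconnected) and the identification $\pi_0 F^{\con}([0,1]) = B_0 = \pi_0 F^{\con}(\{t_0\})$ provided by hypothesis~\cref{i:dis_f} and the connectedness of $[0,1]$, shows that $\gamma_1(t_0)$ is independent of $t_0$.

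Applying this independence to $m^* \xi$ for arbitrary $\xi \in B_0$, the composite $\{t_0\} \times [0,1] \hookrightarrow [0,1]^2 \xrightarrow{m} [0,1]$ is the map $t \mapsto \max(t_0, t)$, so
\begin{equation*}
\gamma_1(t_0)(m^* \xi) = \bigl(\max(t_0, \cdot)\bigr)^* \xi.
\end{equation*}
At $t_0 = 0$ this map is the identity on $[0,1]$, giving $\xi$; at $t_0 = 1$ it is the constant map at $1$, which factors as $[0,1] \to * \to [0,1]$ and so gives $\alpha \operatorname{ev}_1 \xi = \alpha\beta\xi$. The $t_0$-independence then forces $\xi = \alpha\beta\xi$, which combined with $\beta\alpha = \operatorname{id}_{A_0}$ yields that $\alpha$ is an isomorphism. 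The main conceptual step, rather than an obstacle, is extracting the $t_0$-independence of $\gamma_1(t_0)$ from hypothesis~\cref{i:dis_f}; the remainder is a direct calculation with $m$.
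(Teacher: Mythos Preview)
Your argument is correct and takes a genuinely different, much shorter route than the paper's.

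The paper proceeds through a chain of ``rigidity'' statements: first it uses all five hypotheses (in particular the blowup condition~\cref{i:n} and the \(\aleph_{1}\)-filtered-colimit condition~\cref{i:aleph1}) to prove that \(\pi_{0}E\), \(\pi_{1}E\), and \(\pi_{0}F\) are \(\NN\)-rigid, via a comparison with \(\beta\NN\setminus\NN\) (Proposition~xz91s9). It then upgrades this to \(([0,1]\setminus\{1/2\})\)-rigidity of \(\pi_{0}E\) using further blowup squares (Lemma~xzhgdy), and finishes with a bisection argument (Lemma~x1fpfy).

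Your argument, by contrast, is the classical contractibility trick: the discreteness hypotheses~\cref{i:dis_e} and~\cref{i:dis_f} directly force the restriction maps at different points of \([0,1]\) to coincide (since a discrete condensed abelian group \(M\) has \(M([0,1])\simeq M(*)\) with all point-evaluations equal, by \cref{cw_indis}), and then the homotopy \(m\) does the rest. You never touch hypothesis~\cref{i:n}, and~\cref{i:aleph1} is used only so that \(E^{\con}\) and \(F^{\con}\) exist; \cref{i:add} enters only to ensure that \(E^{\con}\) and \(F^{\con}\) agree with \(E\) and \(F\) on the point. So you actually prove a sharper statement. The paper's approach builds machinery (the rigidity formalism) that may have independent interest, but for the theorem as stated your argument is both more elementary and more efficient.
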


This implies \cref{yros}:

\begin{proof}[Proof of \cref{yros}]
  It suffices to check that
  the conditions of \cref{x87vf0} are satisfied
  for \(F=\Sigma^{n}K(\Cls{C}(\X;A))\).
  Our assumptions directly imply
  \cref{i:dis_e,i:dis_f}.
  The metrizability of~\(A\)
  implies~\cref{i:aleph1}
  and we can also directly check~\cref{i:add}.
  Therefore, \cref{i:n} is only left to prove.
  For a blowup square~\cref{e:d0d49},
  we obtain the square
  \begin{equation*}
    \begin{tikzcd}
      \Cls{C}(Z';A)&
      \Cls{C}(X';A)\ar[l]\\
      \Cls{C}(Z;A)\ar[u]&
      \Cls{C}(X;A)\rlap.\ar[u]\ar[l]
    \end{tikzcd}
  \end{equation*}
  By the Dugundji extension theorem~\cite{Dugundji51},
  the horizontal maps are surjective.
  Also, their kernels are isomorphic.
  Hence \(K\) maps this to a square
  whose total fiber is \(1\)-connective.
\end{proof}

The rest of this section
is devoted to the proof of \cref{x87vf0}.
The following notion is useful
in the proof:

\begin{definition}\label{x5fh59}
  Let \(\cat{C}\) be a presentable \(\infty\)-category
  and \(F\colon\Cat{Cpt}^{\op}\to\cat{C}\)
  an accessible functor.
  For a locally compact space \(U\),
  we say that \(F\) is \emph{\(U\)-rigid} if
  the map
  \begin{equation*}
    \injlim_{V\subset U} F(V^{+})
    \to
    F(\{\infty\})
  \end{equation*}
  is an equivalence,
  where \(V\) runs over open subsets of~\(U\)
  with compact complement.
  We similarly define the notion of \(U\)-rigidity
  for an accessible functor \(F\colon\Cat{TDis}^{\op}\to\cat{C}\)
  for a locally profinite set~\(U\).
\end{definition}

\begin{example}\label{x6v66d}
  In the situation of \cref{x5fh59},
  the functor~\(F\)
  is \(U\)-rigid
  for any compactum~\(U\).
  In general,
  for a compact subset
  \(K\subset U\),
  it is \(U\)-rigid if and only if
  it is \(U\setminus K\)-rigid.
\end{example}

\begin{example}\label{rig_ret}
  In the situation of \cref{x5fh59},
  consider a local compactum~\(V\),
  which is a retract of~\(U\)
  in the category of local compacta
  and proper maps.
  Then if \(F\) is \(U\)-rigid,
  it is also \(V\)-rigid.
\end{example}

\begin{example}\label{xzdwhz}
  Consider the situation of \cref{x5fh59}.
  \begin{itemize}
    \item
      The functor~\(F\) is \(\NN\)-rigid
      if and only if
      the map
      \(\injlim_{n\geq0}
      F(\NN_{\geq n}\cup\{\infty\})
      \to
      F(\{\infty\})\)
      is an equivalence.
    \item
      The functor~\(F\) is \([0,\infty)\)-rigid
      if and only if the map
      \(\injlim_{x\geq0}
      F([x,\infty])
      \to
      F(\{\infty\})\)
      is an equivalence.
    \item
For an ultrafilter~\(\mu\) on a set~\(I\),
      the \((\beta I\setminus\{\mu\})\)-rigidity condition
      has appeared in \cref{dis}.
  \end{itemize}
\end{example}

\begin{example}\label{xwx7gw}
  Let \(\cat{C}\) be a compactly assembled \(\infty\)-category
  and \(F\) a condensed object,
  regarded as an accessible functor \(F\colon\Cat{TDis}^{\op}\to\cat{C}\)
  by \cref{xj8jo5}.
  If \(F\) is discrete,
  it is \(U\)-rigid for any locally profinite set~\(U\)
  by \cref{xd9foy}.
\end{example}

The following is a key observation:

\begin{proposition}\label{xz91s9}
  Let \(F\colon\Cat{TDis}^{\op}\to\Cat{Sp}\)
  be a functor satisfying the following condition:
  \begin{conenum}
    \item\label{i:y_aleph1}
      It preserves \(\aleph_{1}\)-filtered colimits.
    \item\label{i:y_add}
      It preserves finite coproducts.
    \item\label{i:y_n}
      It carries
      any blowup square of profinite sets
to a square
      whose total fiber has trivial~\(\pi_{0}\).
    \item\label{i:y_dis}
      It is \(\beta I\setminus\{\mu\}\)-rigid
      for any ultrafilter \(\mu\) on a set~\(I\);
      i.e., its condensation is discrete
      (by \cref{dis}).
\end{conenum}
  Then \(\pi_{0}F\) is \(\NN\)-rigid.
\end{proposition}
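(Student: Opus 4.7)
My plan is to establish $\NN$-rigidity of $\pi_0 F$ by exploiting the blowup square
\[
\begin{tikzcd}
\beta\NN\setminus\NN \ar[r,hook]\ar[d] & \beta\NN_{\geq n} \ar[d,twoheadrightarrow] \\
\{\infty\}\ar[r,hook] & \NN_{\geq n}\cup\{\infty\}
\end{tikzcd}
\]
of profinite sets for each $n\geq 0$, where the right vertical map sends $k\in\NN_{\geq n}$ to itself and every nonprincipal ultrafilter to $\infty$. Surjectivity of the comparison $\injlim_n\pi_0 F(\NN_{\geq n}\cup\{\infty\})\to\pi_0 F(\{\infty\})$ is immediate from the retraction $\NN_{\geq n}\cup\{\infty\}\to\{\infty\}$, so the task reduces to showing $\injlim_n K_n^P=0$ for the kernels $K_n^P:=\ker(\pi_0 F(\NN_{\geq n}\cup\{\infty\})\to\pi_0 F(\{\infty\}))$.

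The two main ingredients are \cref{i:y_n,i:y_dis}. By \cref{i:y_n}, $F$ carries the displayed blowup to a square whose total fiber $T_n$ has trivial $\pi_0$, yielding an injection $\pi_0 F(\NN_{\geq n}\cup\{\infty\})\hookrightarrow\pi_0\bigl(F(\{\infty\})\times_{F(\beta\NN\setminus\NN)}F(\beta\NN_{\geq n})\bigr)$. Via Mayer--Vietoris this controls an element of $K_n^P$ by its restriction to $\beta\NN_{\geq n}$, which lies in $K_n^Q:=\ker(\pi_0 F(\beta\NN_{\geq n})\to\pi_0 F(\beta\NN\setminus\NN))$. Separately, \cref{i:y_dis} makes the condensation $F^\con$ discrete, and \cref{dis_pf} combined with $F\simeq F^\con$ on extremally disconnected sets and $\beta\NN\setminus\NN=\projlim_n\beta\NN_{\geq n}$ gives the equivalence $\injlim_n F(\beta\NN_{\geq n})\simeq F^\con(\beta\NN\setminus\NN)$. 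By naturality of the unit $F\to F^\con$, this equivalence factors as
\[
\injlim_n F(\beta\NN_{\geq n})\to F(\beta\NN\setminus\NN)\to F^\con(\beta\NN\setminus\NN),
\]
exhibiting the first arrow as a split monomorphism. Hence $\injlim_n K_n^Q=0$, and combined with the blowup control this gives $\injlim_n K_n^P=0$.

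The main obstacle I anticipate is the Mayer--Vietoris correction: the injection from \cref{i:y_n} lands in $\pi_0$ of the spectrum-level pullback, which differs from the pullback of $\pi_0$'s by a contribution from $\pi_1 F(\beta\NN\setminus\NN)$. To handle this I plan to use the complementary factorization $T_n\simeq\fib(\Psi_n\to\Omega)$, where $\Psi_n:=\fib(F(\NN_{\geq n}\cup\{\infty\})\to F(\beta\NN_{\geq n}))$ and $\Omega:=\fib(F(\{\infty\})\to F(\beta\NN\setminus\NN))$. Then $\pi_0 T_n=0$ gives $\pi_0\Psi_n\hookrightarrow\pi_0\Omega$; after passing to $m\geq n$ such that the restriction of a given $\alpha\in K_n^P$ to $\beta\NN_{\geq m}$ vanishes, the induced element in $K_m^P$ lifts to $\pi_0\Psi_m$, whose image in $\pi_0\Omega$ lies in the image of $\pi_1 F(\beta\NN\setminus\NN)$; this residual class dies for large $m$ by running the same split-mono argument one degree up using \cref{i:y_dis}.
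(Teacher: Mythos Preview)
Your architecture is essentially the paper's: the same blowup square, the same comparison of \(F\) with \(F^{\con}\) on the corona \(\beta\NN\setminus\NN\), and the same reduction to showing \(\injlim_n K_n^P=0\). Your split-monomorphism argument for
\(\injlim_n F(\beta\NN_{\geq n})\to F(\beta\NN\setminus\NN)\)
is exactly half of the paper's first step.

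The gap is in your last paragraph. After passing to \(m\) with \(\alpha|_{\beta\NN_{\geq m}}=0\) and choosing a lift \(\tilde\alpha\in\pi_0\Psi_m\), the image \(\omega\in\pi_0\Omega\) does not depend on \(m\); going to larger \(m'\) lets you modify \(\omega\) only by elements in the image of \(\pi_1 F(\beta\NN_{\geq m'})\to\pi_1 F(\beta\NN\setminus\NN)\to\pi_0\Omega\). So what you need in order to kill \(\omega\) is \emph{surjectivity} of \(\injlim_{m}\pi_1 F(\beta\NN_{\geq m})\to\pi_1 F(\beta\NN\setminus\NN)\), whereas ``the same split-mono argument one degree up'' gives only injectivity. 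Equivalently, with the horizontal fibers \(B_m=\fib(F(\beta\NN_{\geq m})\to F(\beta\NN\setminus\NN))\) one has \(\injlim_m\pi_0 B_m\cong\coker\bigl(\injlim_m\pi_1 F(\beta\NN_{\geq m})\to\pi_1 F(\beta\NN\setminus\NN)\bigr)\), which your argument does not force to vanish. A related warning sign: you never invoke hypothesis~\cref{i:y_aleph1}.

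The paper closes this gap by upgrading your split monomorphism to an equivalence. Using~\cref{i:y_aleph1}, one reduces to profinite sets of countable weight, and then Halmos's retraction theorem shows that every nonempty closed inclusion \(T\hookrightarrow S\) of such admits a retraction; hence \(\pi_*F(S)\to\pi_*F(T)\) is always surjective (\cref{xxuvm5}). Combined with your split-mono, this gives \(\injlim_n F(\beta\NN_{\geq n})\xrightarrow{\;\sim\;}F(\beta\NN\setminus\NN)\). From there the \(\pi_1\)-correction disappears entirely (e.g.\ \(\injlim_m\pi_0 B_m=0\), so \(\injlim_m K_m^P\hookrightarrow 0\)); the paper phrases this as \(\pi_0 F(\NN\cup\{\infty\})\hookrightarrow\pi_0 F^{\con}(\NN\cup\{\infty\})\) and then uses the \(\NN\)-rigidity of the discrete \(F^{\con}\).
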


Our proof here is inspired by
the proof of~\cite[Proposition~3.15]{Complex}.
We start with the following observation:

\begin{lemma}\label{xxuvm5}
  Let \(F\colon\Cat{TDis}^{\op}\to\Cat{Ab}\)
  be a functor preserving \(\aleph_{1}\)-filtered colimits.
  Then
  the map \(F(S)\to F(T)\) is
  surjective for any inclusion
  of profinite sets \(\emptyset\neq T\hookrightarrow S\).
\end{lemma}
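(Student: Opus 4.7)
The plan is to combine two ingredients: the hypothesis that $F$ preserves $\aleph_{1}$-filtered colimits, used to reduce to a countable-weight quotient of $T$, and a Tietze-type extension of continuous maps with countable-weight profinite targets.

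First I would show that the poset of surjective quotients $T \twoheadrightarrow Q$ with $Q$ profinite of countable weight is $\aleph_{1}$-cofiltered with limit $T$ in $\Cat{TDis}$. Given countably many such quotients $T \twoheadrightarrow Q_{n}$, the image of $T$ in $\prod_{n}Q_{n}$ is a common lower bound: a countable product of countable-weight profinite sets has countable weight, and closed subspaces inherit this. Since finite quotients already separate points of $T$, we have $T = \projlim Q$ over this system. By hypothesis, therefore $F(T) \simeq \injlim F(Q)$, so any prescribed $x \in F(T)$ arises as $F(q)(y)$ for some such $q\colon T \twoheadrightarrow Q$ and some $y \in F(Q)$.

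Next I would extend $q$ to a continuous map $\tilde{q}\colon S \to Q$. Writing $Q = \projlim_{n\in\NN}Q_{n}$ with $Q_{n}$ finite and transitions $\phi_{n}\colon Q_{n} \twoheadrightarrow Q_{n-1}$ surjective, $q$ corresponds to a compatible family $q_{n}\colon T \to Q_{n}$. One constructs $\tilde{q}_{n}\colon S \to Q_{n}$ inductively, maintaining $\tilde{q}_{n}|_{T} = q_{n}$, $\phi_{n}\circ\tilde{q}_{n} = \tilde{q}_{n-1}$, and $\tilde{q}_{n}(S) \subseteq q_{n}(T)$. The base case uses that any clopen partition of $T$ into finitely many pieces extends to a clopen partition of $S$, since disjoint closed sets in a profinite space can be separated by clopens. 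The inductive step reduces, over each clopen fiber of $\tilde{q}_{n-1}$, to extending a finite-valued map from a closed subset of a profinite set; the image invariant keeps the relevant fibers $\phi_{n}^{-1}(q) \cap q_{n}(T)$ nonempty, which is where nonemptiness of $T$ enters. Passing to the limit yields $\tilde{q}$. Then $F(\tilde{q})(y) \in F(S)$ lifts $x$, since $F(\iota)\circ F(\tilde{q}) = F(\tilde{q}\circ\iota) = F(q)$ sends $y$ to $x$.

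The main obstacle is the second step: one cannot hope for a retraction $S \to T$ in general, since closed subspaces of profinite sets are retracts precisely in the extremally disconnected case. This forces the detour through a countable-weight quotient $Q$, for which a telescoping inductive extension along the finite tower $Q = \projlim Q_{n}$ becomes available. The first step is then routine given the stability of ``countable weight'' under countable products.
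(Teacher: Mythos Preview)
Your argument is correct but takes a different route from the paper. The paper reduces \emph{both} $S$ and $T$ to the countable-weight case simultaneously---write $S=\projlim_\alpha S_\alpha$ as an $\aleph_1$-cofiltered limit of countable-weight profinite sets and let $T_\alpha$ be the image of $T$ in $S_\alpha$, so that $T=\projlim_\alpha T_\alpha$ over the same diagram and $F(S)\to F(T)$ is the filtered colimit of the maps $F(S_\alpha)\to F(T_\alpha)$---and then simply cites a theorem of Halmos to the effect that a nonempty closed subspace of a \emph{metrizable} profinite set is always a retract. Your inductive extension along a tower of finite quotients is essentially a direct proof of that fact (indeed of the mild generalization where $S$ may have arbitrary weight), so your approach trades a literature citation for a self-contained construction. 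Your observation that a retraction $S\to T$ need not exist in general is correct and is what forces some reduction; the paper just makes a different one, shrinking $S$ along with $T$ rather than replacing $T$ by a countable-weight quotient.
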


\begin{proof}
  By the assumption on~\(F\),
  it suffices to treat the case
  when both \(S\) and~\(T\)
  are of countable weight.
  In that case,
  we have a retraction \(S\to T\)
  according to Halmos~\cite[Theorem~2]{Halmos61}.
\end{proof}

\begin{proof}[Proof of \cref{xz91s9}]
  We first show that
  \(F(\beta\NN\setminus\NN)\to F^{\con}(\beta\NN\setminus\NN)\)
  is an equivalence.
  We consider the square
  \begin{equation*}
    \begin{tikzcd}
      \injlim_{n\geq0}F(\beta\NN\setminus\{0,\ldots,n-1\})\ar[r]\ar[d]&
      F(\beta\NN\setminus\NN)\ar[d]\\
      \injlim_{n\geq0}F^{\con}(\beta\NN\setminus\{0,\ldots,n-1\})\ar[r]&
      F^{\con}(\beta\NN\setminus\NN)\rlap.
    \end{tikzcd}
  \end{equation*}
  The left arrow is an equivalence
  since
  \(\beta\NN\setminus\{0,\ldots,n-1\}\) is
  extremally disconnected for \(n\geq0\).
  The bottom arrow is an equivalence by~\cref{i:y_dis}
  and \cref{xwx7gw}.
  Since \cref{i:y_aleph1} and \cref{xxuvm5} show
  that the top arrow is surjective on \(\pi_{*}\),
  the right arrow is an equivalence.

Then from the blowup square
  \begin{equation*}
    \begin{tikzcd}
      \beta\NN\setminus\NN\ar[r,hook]\ar[d]&
      \beta\NN\ar[d]\\
      \{\infty\}\ar[r,hook]&
      \NN\cup\{\infty\}
    \end{tikzcd}
  \end{equation*}
  and~\cref{i:y_n},
  we see that
  \(\pi_{0}F(\NN\cup\{\infty\})\to\pi_{0}F^{\con}(\NN\cup\{\infty\})\)
  is injective
  and therefore
  the \(\NN\)-rigidity of~\(F^{\con}\),
  which follows from~\cref{i:y_dis} and \cref{xwx7gw},
  implies that of~\(F\).
\end{proof}

We need two more lemmas:

\begin{lemma}\label{xzhgdy}
  For an accessible functor
  \(E\colon\Cat{Cpt}^{\op}\to\Cat{Sp}\),
  consider another accessible functor~\(F\)
  given by \(F(\X)=E(\X\times[0,1])\).
  Suppose that \(\pi_{0}E\), \(\pi_{1}E\), and~\(\pi_{0}F\) are \(\NN\)-rigid
  and \(E\) carries any blowup square of compacta
  to a square whose total fiber has trivial~\(\pi_{0}\).
  Then \(\pi_{0}E\) is \(([0,1]\setminus\{1/2\})\)-rigid.
\end{lemma}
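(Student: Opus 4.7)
The plan is to present each $V_n^+$, for the cofinal shrinking family $V_n = (1/2 - 1/n,\, 1/2) \cup (1/2,\, 1/2 + 1/n)$, as the blowup of $[0,1]$ along the ``complement'' compactum $K_n := [0,\, 1/2 - 1/n] \cup \{1/2\} \cup [1/2 + 1/n,\, 1]$. Concretely, $V_n^+$ is the quotient $[0,1]/K_n$ collapsing $K_n$ to the added compactification point, and fits into the blowup square
\begin{equation*}
\begin{tikzcd}
K_n \ar[r, hook] \ar[d] & {[0, 1]} \ar[d]\\
\{*\} \ar[r, hook] & V_n^+;
\end{tikzcd}
\end{equation*}
the horizontal maps are closed embeddings, and the square is bicartesian by construction. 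Applying $E$ and using the hypothesis that $E$ of a blowup square has total fiber with trivial $\pi_0$ (together with approximate additivity derived from the blowup-square hypothesis applied to $\emptyset\hookrightarrow X_1,\ X_2\hookrightarrow X_1\sqcup X_2$), I would obtain a Mayer--Vietoris-type exact sequence
\begin{equation*}
\pi_1 E(K_n) \to \pi_0 E(V_n^+) \to \pi_0 E(*)\oplus\pi_0 F(*) \to \pi_0 E(K_n),
\end{equation*}
in which $\pi_0 F(*) = \pi_0 E([0,1])$ appears because $[0,1]$ sits as the top-right vertex of the blowup square.

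Next I would take the colimit as $n\to\infty$. The transition maps $V_n^+\to V_{n+1}^+$ correspond to the nested inclusions $K_n\hookrightarrow K_{n+1}$ (with $\bigcup_n K_n = [0,1]$). Approximate additivity decomposes $\pi_* E(K_n)$ as contributions from the two closed intervals $[0,\,1/2-1/n]$ and $[1/2+1/n,\,1]$ (each a copy of $\pi_* F(*)$, since they are homeomorphic to $[0,1]$) and from the singleton $\{1/2\}$ (a copy of $\pi_* E(*)$). The moving boundary points $\{1/2 - 1/n,\, 1/2 + 1/n\}$ together with $\{1/2\}$ assemble into a convergent-sequence compactum homeomorphic to $\NN\cup\{\infty\}$, and the $\NN$-rigidity hypotheses on $\pi_0 E$, $\pi_1 E$, and $\pi_0 F$ (via the characterization in \cref{xzdwhz}) control the $n\to\infty$ behavior of each piece. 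Matching these identifications through the Mayer--Vietoris sequence yields $\injlim_n \pi_0 E(V_n^+)\simeq \pi_0 E(*)$, which is the desired $([0,1]\setminus\{1/2\})$-rigidity.

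The main difficulty is the coordinated colimit computation: one must simultaneously track how $\pi_0 E(K_n)$, $\pi_1 E(K_n)$, and the restriction map $\pi_0 F(*)\to\pi_0 E(K_n)$ behave as $n\to\infty$, and use all \emph{three} $\NN$-rigidity hypotheses to cancel every contribution other than the desired $\pi_0 E(*)$. The $\NN$-rigidity of $\pi_1 E$ (rather than only $\pi_0 E$) is needed precisely to kill the $\pi_1 E(K_n)$ tail of the Mayer--Vietoris sequence, while the $\NN$-rigidity of $\pi_0 F$ handles the two large closed-interval summands appearing in $K_n$; neither of these would be available from the $\pi_0 E$-rigidity alone.
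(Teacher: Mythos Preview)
There is a genuine directional problem with your colimit. The $([0,1]\setminus\{1/2\})$-rigidity colimit $\injlim_n \pi_0 E(V_n^+)\to\pi_0 E(*)$ uses the transition maps induced by the \emph{open-embedding} maps $(V_{n+1})^+\hookrightarrow V_n^+$ (extend the inclusion $V_{n+1}\subset V_n$ by $\infty\mapsto\infty$); applying $E$ gives $E(V_n^+)\to E(V_{n+1}^+)$. But your blowup squares, assembled via $K_n\hookrightarrow K_{n+1}$, produce the \emph{collapse} maps $V_n^+=[0,1]/K_n\to[0,1]/K_{n+1}=V_{n+1}^+$, and these are not the same maps (the composite $[0,1]\to V_{n+1}^+\hookrightarrow V_n^+$ sends all of $K_{n+1}$ to $\infty$, whereas $[0,1]\to V_n^+$ only collapses $K_n$). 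So after applying $E$, your squares transition in the \emph{wrong direction} for the rigidity colimit, and there is no map of blowup squares going the right way: one would need $K_{n+1}\to K_n$ compatibly with the inclusions into $[0,1]$, which does not exist. Your subsequent analysis of $\injlim_n \pi_0 E(K_n)$ via $\NN$-rigidity therefore does not compute what you need.

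A second, smaller issue: from the hypothesis that the total fiber has trivial $\pi_0$ you only get that $\pi_0 E(V_n^+)$ injects into $\pi_0\bigl(E(*)\times_{E(K_n)}E([0,1])\bigr)$; the four-term sequence you wrote, with exactness at $\pi_0 E(V_n^+)$ governed by $\pi_1 E(K_n)$, would require the total fiber to have trivial $\pi_1$ as well. The paper's proof sidesteps both problems by working with blowup squares whose right-hand column itself varies with $n$ in the correct direction: first $[0,1]\times\NN_{\geq n}^+\to([0,1]\times\NN_{\geq n})^+$ (so that $\NN$-rigidity of $\pi_0 E$ and $\pi_0 F$ yields $([0,1]\times\NN)$-rigidity), and then a gluing square assembling $[0,1]$-blocks into $[2n,\infty)^+$ (where the $\NN$-rigidity of $\pi_1 E$ enters). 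In both families the one-point compactifications shrink with $n$, so the induced maps on $E$ go the right way.
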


\begin{proof}
We apply the condition to the blowup squares
  \begin{equation*}
    \begin{tikzcd}
      {[0,1]}\times\{\infty\}\ar[r]\ar[d]&
      {[0,1]}\times\NN_{\geq n}^{+}\ar[d]\\
      \{\infty\}\ar[r]&
      ({[0,1]}\times\NN_{\geq n})^{+}
    \end{tikzcd}
  \end{equation*}
  for \(n\geq0\).
  Then the \(\NN\)-rigidity of~\(\pi_{0}E\) and~\(\pi_{0}F\)
  implies
  the \(([0,1]\times\NN)\)-rigidity of~\(\pi_{0}E\).

We apply the condition to the (pointed) blowup square
  \begin{equation*}
    \begin{tikzcd}[column sep=huge]
      \NN_{\geq n}^{+}
      \ar[r,"k\mapsto{(0,k)}"]
      \ar[d,"k\mapsto{(1,k)}"']&
      ({[0,1]}\times\NN_{\geq n})^{+}
      \ar[d,"{(x,k)}\mapsto x+2k+1"]\\
      ({[0,1]}\times\NN_{\geq n})^{+}
      \ar[r,"{(x,k)}\mapsto x+2k"]&
      {[2n,\infty)^{+}}\rlap.
    \end{tikzcd}
  \end{equation*}
  Then
  the \(([0,1]\times\NN)\)-rigidity of~\(\pi_{0}E\)
  and the \(\NN\)-rigidity of~\(\pi_{1}E\)
  implies the \([0,\infty)\)-rigidity of~\(\pi_{0}E\).
  By replacing each term \((\X)^{+}\)
  in this square
  with \((\X\amalg\X)^{+}\),
  we obtain the \(([0,\infty)\amalg[0,\infty))\)-rigidity of~\(\pi_{0}E\),
  which is equivalent to the \(([0,1]\setminus\{1/2\})\)-rigidity.
\end{proof}

\begin{lemma}\label{x1fpfy}
  Let \(F\colon\Cat{Cpt}^{\op}\to\Cat{Sp}\) be an accessible functor
  that carries any blowup square of compacta
  to a square whose total fiber has trivial~\(\pi_{0}\).
  When \(\pi_{0}F\) is \(([0,1]\setminus\{1/2\})\)-rigid,
  the tautological map \(\pi_{0}F(*)\to\pi_{0}F([0,1])\)
  is an equivalence.
\end{lemma}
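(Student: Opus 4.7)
The map $\pi_{0}F(\ast)\to\pi_{0}F([0,1])$ is split injective via evaluation at any point of $[0,1]$, so the substance is surjectivity. Given $\alpha\in\pi_{0}F([0,1])$, I would subtract the constant extension of $\alpha|_{1/2}$ to reduce to the case $\alpha|_{1/2}=0$ and prove $\alpha=0$.

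The first step is to bootstrap the hypothesis to one-sided rigidities. Both $[0,1/2)$ and $(1/2,1]$ are retracts of $[0,1]\setminus\{1/2\}$ in locally compact Hausdorff spaces with proper maps (via the fold $x\mapsto 1-x$), so by \cref{rig_ret}, $\pi_{0}F$ is $[0,1/2)$-rigid and $(1/2,1]$-rigid. With the cofinal systems $V_{\delta}=(1/2-\delta,1/2)$ and $V_{\delta}=(1/2,1/2+\delta)$ in the two half-intervals, these say concretely that evaluation at $1/2$ induces isomorphisms
\[
  \injlim_{\delta\to 0^{+}}\pi_{0}F([1/2-\delta,1/2])\xrightarrow{\sim}\pi_{0}F(\ast)\xleftarrow{\sim}\injlim_{\delta\to 0^{+}}\pi_{0}F([1/2,1/2+\delta]).
\]
Injectivity of these isomorphisms, applied to $\alpha|_{[1/2-\delta,1/2]}$ and $\alpha|_{[1/2,1/2+\delta]}$ (which both map to $\alpha|_{1/2}=0$), produces an $\epsilon>0$ with $\alpha|_{[1/2-\epsilon,1/2]}=0$ and $\alpha|_{[1/2,1/2+\epsilon]}=0$.

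Next I would propagate this vanishing via two applications of the blowup hypothesis. First, the Mayer--Vietoris blowup of $[1/2-\epsilon,1/2+\epsilon]$ at $\{1/2\}$ (with $X'=[1/2-\epsilon,1/2]\sqcup[1/2,1/2+\epsilon]$, $Z'$ two copies of $1/2$) combines the two half-vanishings: $\pi_{0}F([1/2-\epsilon,1/2+\epsilon])$ injects into the $\pi_{0}$ of the corresponding spectrum pullback, and the image of $\alpha|_{[1/2-\epsilon,1/2+\epsilon]}$ vanishes algebraically, yielding $\alpha|_{[1/2-\epsilon,1/2+\epsilon]}=0$. Second, the blowup of $[0,1]$ at $\{1/2-\epsilon,1/2+\epsilon\}$ (with $X'=[0,1/2-\epsilon]\sqcup[1/2-\epsilon,1/2+\epsilon]\sqcup[1/2+\epsilon,1]$) provides an injection of $\pi_{0}F([0,1])$ into a spectrum pullback over the cut points, from which $\alpha=0$ should follow by combining vanishing on the middle piece with the fact that $\alpha|_{\{1/2\pm\epsilon\}}=0$ (inherited from the middle).

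The hard part will be controlling the $\pi_{1}$-contribution: the hypothesis ``total fiber has trivial $\pi_{0}$'' only gives an injection $\pi_{0}F(X)\hookrightarrow\pi_{0}(\text{spectrum pullback})$, which surjects onto the algebraic $\pi_{0}$-pullback with kernel $\operatorname{coker}(\pi_{1}F(X')\oplus\pi_{1}F(Z)\to\pi_{1}F(Z'))$, so a class mapping to $(0,0)$ in algebraic pullback need not vanish. I would kill this obstruction by exploiting that the constant sections $F(\{p\})\to F([a,b])$ split the restriction maps to boundary points, forcing surjectivity of the relevant $\pi_{1}$-maps so the cokernel is zero in the blowups at hand; the disjoint-union pieces can be accommodated by the blowup hypothesis applied to $X'=A\sqcup B$ with $Z=A, Z'=\emptyset$ (yielding an additivity-type injection). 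If this direct route fails, the fallback is to replace the one-sided rigidities with the full $([0,1]\setminus\{1/2\})$-rigidity combined with the blowup $X'=[1/2-\delta,1/2+\delta]\to V_{\delta}^{+}=X$, whose $\pi_{1}$-contributions should cancel in the colimit as $\delta\to 0$ by the rigidity isomorphism.
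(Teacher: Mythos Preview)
Your step~5 does not work, and no amount of $\pi_{1}$-bookkeeping will rescue it. The blowup at $\{1/2-\epsilon,1/2+\epsilon\}$ gives (after your own splitting observation kills the $\pi_{1}$-cokernel) an injection
\[
  \pi_{0}F([0,1])\hookrightarrow \pi_{0}F([0,1/2-\epsilon])\times_{\pi_{0}F(\{1/2-\epsilon\})}\pi_{0}F([1/2-\epsilon,1/2+\epsilon])\times_{\pi_{0}F(\{1/2+\epsilon\})}\pi_{0}F([1/2+\epsilon,1]),
\]
and the image of $\alpha$ is $(\alpha|_{[0,1/2-\epsilon]},\,0,\,\alpha|_{[1/2+\epsilon,1]})$. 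For this tuple to be zero you would need the outer restrictions to vanish, and you have no information about them whatsoever: knowing $\alpha|_{\{1/2\pm\epsilon\}}=0$ only says the outer restrictions land in the reduced part $G(0,1/2-\epsilon)$ and $G(1/2+\epsilon,1)$, not that they are zero there. In effect you have used the rigidity hypothesis only at the single point $1/2$, which buys you local vanishing near $1/2$ and nothing more.

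The missing idea is that $([0,1]\setminus\{1/2\})$-rigidity actually gives local rigidity at \emph{every} point $p\in[0,1]$: via \cref{rig_ret}, $[0,\infty)$ and $[0,\infty)\amalg[0,\infty)$ are retracts of $[0,1]\setminus\{1/2\}$, so $\injlim_{p\in[a,b]}G(a,b)=0$ for each $p$. The paper then runs a bisection: set $G(a,b)=\operatorname{coker}(\pi_{0}F(*)\to\pi_{0}F([a,b]))$, use the midpoint blowup to get an injection $G(a,b)\hookrightarrow G(a,(a+b)/2)\oplus G((a+b)/2,b)$, and iterate to produce a nested sequence of intervals shrinking to some $p$ on which a putative nonzero class survives, contradicting local rigidity at $p$. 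Your $\pi_{1}$ worry is a nonissue here for exactly the reason you guessed---the restriction to the midpoint is split surjective, so the spectrum pullback agrees with the algebraic one on $\pi_{0}$---but that correct observation was aimed at the wrong target.
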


\begin{proof}
  For \(0\leq a\leq b\leq1\),
  we write \(G(a,b)\)
  for the split cokernel of the map
  \(\pi_{0}F(*)\to\pi_{0}F([a,b])\).
  We wish to prove that
  \(G(0,1)\) is trivial,
  or equivalently,
  any element of \(G(0,1)\) is zero.
  
By using the condition to the blowup square
  \begin{equation*}
    \begin{tikzcd}
      \{(a+b)/2\}\ar[r]\ar[d]&
      {[a,(a+b)/2]}\ar[d]\\
      {[(a+b)/2,b]}\ar[r]&
      {[a,b]}\rlap,
    \end{tikzcd}
  \end{equation*}
  we see that
  \(G(a,b)\to G(a,(a+b)/2)\oplus G((a+b)/2,b)\)
  is injective.
  This means that
  if there is a nonzero element in \(G(a,b)\),
  it remains nonzero in
  either \(G(a,(a+b)/2)\) or~\(G((a+b)/2,b)\).
  By repeating this process,
  we see that
  there is \(p\in[0,1]\)
  such that the element does not vanish
  in \(\injlim_{p\in[a,b]}G(a,b)\)
  where \([a,b]\) runs
  over the neighborhood of~\(p\).
  But by
  the \(([0,1]\setminus\{1/2\})\)-
  or \([0,\infty)\)-rigidity
  (which follows from \cref{rig_ret})
  of~\(\pi_{0}E\),
  this group vanishes.
\end{proof}

\begin{proof}[Proof of \cref{yros}]
By applying \cref{xz91s9}
  to~\(E\), \(\Sigma^{-1}E\), and~\(F\),
  we see that
  \(\pi_{0}E\), \(\pi_{1}E\), and~\(\pi_{0}F\) are \(\NN\)-rigid.
  Hence by \cref{xzhgdy},
  \(\pi_{0}E\) is \(([0,1]\setminus\{1/2\})\)-rigid.
  Therefore, the desired result
  follows from \cref{x1fpfy}.
\end{proof}

\let\AA\oldAA \let\L\oldL \let\SS\oldSS \let\top\oldtop \bibliographystyle{plain}
 \newcommand{\yyyy}[1]{}

\end{document}